\newcommand{\McC}{\raise.5ex\hbox{c}}
\newcommand{\D}{\mathbb{D}}
\newcommand{\T}{\mathbb{T}}
\newcommand{\C}{\mathbb{C}}
\newcommand{\R}{\mathbb{R}}
\newcommand{\p}{\mathfrak{p}}
\newcommand{\q}{\mathfrak{q}}
\newtheorem{theorem}{Theorem}[section]
\newtheorem{lemma}[theorem]{Lemma}
\newtheorem*{theorem*}{Theorem}
\newtheorem*{conjecture*}{Conjecture}
\newtheorem{corollary}[theorem]{Corollary}
\newtheorem*{corollary*}{Corollary}
\newtheorem*{proposition*}{Proposition}
\def\bb{\begin{color}{blue}}
\def\bg{\begin{color}{green}}
\def\br{\begin{color}{red}}
\def\eg{\end{color}}
\def\er{\end{color}}
\def\eb{\end{color}}
\theoremstyle{remark}
\newtheorem{remark}[theorem]{Remark}
\newtheorem{question}{Question}
\newtheorem{definition}[theorem]{Definition}
\newtheorem{example}[theorem]{Example}
\author[Bickel]{Kelly Bickel}
\address{Department of Mathematics, Bucknell University, 360 Olin Science Building, Lewisburg, PA 17837, USA.}
\email{kelly.bickel@bucknell.edu}
\author[Pascoe]{James Eldred Pascoe}
\address{Department of Mathematics, University of Florida, 1400 Stadium Rd, Gainesville, FL 32611, USA.}
\email{pascoej@ufl.edu}
\author[Sola]{Alan Sola}
\address{Department of Mathematics, Stockholm University, Kr\"aftriket 6, 106 91 Stockholm, Sweden.}
\email{sola@math.su.se}
\thanks{Bickel was supported in part by National Science Foundation DMS grant $\#$1448846. Pascoe was supported in part by National Science Foundation Mathematical Science Postdoctoral Research Fellowship DMS $\#$1606260. Sola was supported in part by the Royal Swedish Academy of Sciences in the form of grant MG2018-0092 from Stiftelsen GS Magnusons fond. }
\keywords{Rational inner functions, polydisk, singularities, critical integrability, level sets.}
 \subjclass[2010]{32A20, 32A40 (primary); 14J17, 14M99, 26E05, 42B20 (secondary)}
\begin{document}

\title[RIFs in higher dimensions]{Singularities of rational inner functions in higher dimensions}
\date{\today}


\begin{abstract} 
We study the boundary behavior of  rational inner functions (RIFs) in dimensions three and higher from both analytic and geometric viewpoints. On the analytic side, we use the critical integrability of the derivative of a rational inner function of several variables to quantify the behavior of a RIF near its singularities, and on the geometric side we show that the unimodular level sets of a RIF convey information about its set of singularities.
We then specialize to three-variable degree $(m,n,1)$ RIFs and conduct a detailed study of their derivative integrability, zero set and unimodular level set behavior, and non-tangential boundary values. Our results, coupled with constructions of non-trivial RIF examples, demonstrate that much of the nice behavior seen in the two-variable case is lost in higher dimensions. 
\end{abstract}
\maketitle

\section{Introduction}\label{sec:intro}
\subsection{Singularities and critical integrability}
How singular is a function $h=g/f$ near a point where $f$ vanishes? There are many ways to make this question precise. One possibility, and the main focus of this paper, is to examine the integrability of different powers of $h$. Another possibility is to study how $h$ runs through different values as one approaches the singularity, for instance by analyzing the geometry of the level sets of $h$. This viewpoint also appears in the present work.

To make matters more concrete, suppose $h=1/f$, where $f \colon \mathbb{R}^n \to \mathbb{R}$ is a function with $f(0,\dots, 0)=0$. One classical approach to quantifying the behavior of $f$ near the origin is to determine the critical integrability index of $f$:
\begin{equation} \label{eqn:CII} \nu(f): = \sup \left \{ \epsilon >0 : \int_U |f(x)|^{-\epsilon} d\mu < \infty\right\},\end{equation}
where $U$ is some small set containing the origin and $\mu$ is a positive measure. The notion of critical integrability index arises naturally in several situations, for instance it has connections to the study of oscillatory integrals as in \cite{V76} as well as other applications in harmonic analysis, geometry, and PDE. See the brief discussion in \cite{CGP13} and the references therein. 

In complete generality, in dimensions higher than one and without any assumptions on the function $f$, the measure $\mu$, and the set $U$, it is a hard problem to analyze the critical integrability index. Typically, in applications, $\mu$ is Lebesgue measure and $U$ is an open set. Usually, it is also assumed  that $f$ exhibits at least some regularity near the origin. For instance, if $f$ is not smooth of finite type, then $\nu(f)$ may be equal to zero \cite{CGP13}, and even when $\nu(f)$ is positive, it can still be quite difficult to determine its exact value. See for example \cite{CCW99}, which considers critical integrability in a rather general setting, where only the existence of certain partial derivatives is assumed. 

There is a rich literature concerning the important case where $f$ is smooth and of finite type and $\mu$ is Lebesgue measure. We detail several recent developments that are particularly relevant to this paper. In the two-variable setting, $\nu(f)$ is closely related to the Newton distance $\Delta(f)$, which is defined using the Taylor series and subsequent Newton polygon of $f$; for precise definitions, see Section \ref{sec:3var1}.  In \cite{G06},  Greenblatt characterized the two-variable $f$ for which $\nu(f) = 1/\Delta(f)$ and studied the endpoint behavior. Earlier, in \cite{PSS99}, Phong, Stein, and Sturm considered the case of real-analytic two-variable functions $f$ and characterized their integrability indices using a family of Newton distances associated with $f$ defined via a family of analytic coordinate systems. Their work was generalized to the weighted setting by Pramanik in \cite{P02}. In higher dimensions, matters become more delicate and there are close connections with the general resolution of singularities \cite{Hau,G10}.  Recently, Collins, Greenleaf, and Pramanik developed a method for resolving singularities that allowed them to generalize the Phong-Stein-Sturm result to the $d$-variable case; specifically, they characterized $\nu(f)$ using numbers defined  via certain families of analytic coordinate systems associated to $f$ \cite{CGP13}.
For additional results related to the critical integrability index and its related circle of ideas, see for instance \cite{G10, CGP13, DHPT18} and the references therein. 

\subsection{Rational inner functions}
In this paper, we contribute to the theory by studying a notion of integrability index for an important 
class of bounded analytic functions of several complex variables, namely rational inner functions in $d$-dimensional polydisks. 
This is of course a restricted class of functions, but the additional algebraic structure of rational inner functions allows us to obtain significantly more information about integrability behavior than one could hope to obtain in the general situation. At the same time, as is explained below, rational inner functions play a very significant role in multivariate function  and  operator theory, providing us with ample motivation to study their behavior near singularities.

The unit polydisk in $\mathbb{C}^d$ is the set  
\[\mathbb{D}^d =\{ (z_1, \dots, z_d) : |z_j|<1 \text{ for } j=1, \dots, d\}.\] 
A ratio of  $d$-variable polynomials $\phi$ is called a {\it rational inner function}, or RIF, if it is holomorphic on $\mathbb{D}^d$ and if 
\[ \lim_{r \nearrow 1} | \phi(r \zeta) |  =1 \text{ for a.e. } \zeta \in \mathbb{T}^d : =\{ (\zeta_1, \dots, \zeta_d) : |\zeta_j|=1 \text{ for } j=1, \dots, d\}.\]
Rational inner functions enjoy certain structural properties not directly apparent in the definition. 
Rudin and Stout \cite{RudStout65} showed that $\phi$ is a RIF if and only if 
 \[ \phi(z) = \lambda \frac{ \tilde{p}(z)}{p(z)},\]
 where $p$ is a polynomial with no zeros on $\mathbb{D}^d$, $\tilde{p}$ is the reflection of $p$, and $\lambda \in \mathbb{T}.$ To define $\tilde{p},$ let $\deg \phi = (n_1, \dots, n_d)$ denote the maximum powers of $z_1, \dots, z_d$ in $\phi$. Then 
 \[ \tilde{p}(z) := z_1^{n_1} \dots z_d^{n_d} \overline{ p( \tfrac{1}{\bar{z}_1}, \dots, \tfrac{1}{\bar{z}_d})}.\]
 Moreover, one can assume that $p$ is atoral \cite{AMS06}. For the purposes of this paper, atoral implies  that $p$ and $\tilde{p}$ share no common factors and if $\mathcal{Z}_p$ denotes the zero set of $p$, then $\dim (\mathcal{Z}_p \cap \mathbb{T}^d) \le d-2.$ In what follows, we will typically assume, without loss of generality, that the unimodular constant $\lambda =1$.
 
The structured class of rational inner functions plays a key role in the study of holomorphic functions on the unit polydisk. Indeed, RIFs are the $d$-variable generalizations of finite Blaschke products \cite{GMRBook}. As such, it should not be surprising that every holomorphic $f\colon  \mathbb{D}^d \rightarrow \mathbb{D}$, that is, every Schur function, can be approximated locally uniformly by RIFs \cite{Rud69}. This has, for example, been used to give proofs that every Schur function on the bidisk has an important structural feature, called an Agler decomposition \cite{B12, Kne08, W10}.
 Rational inner functions also appear as solutions to Nevanlinna-Pick interpolation problems and are used to generate key examples of functions that preserve matrix inequalities \cite{AM02, AMY12}. Denominators of rational inner functions, termed stable polynomials, also make appearances in other parts of analysis, for instance in dynamical systems \cite[Example 4.3]{LSV13}.
Both rational inner functions and stable polynomials have close connections to engineering applications. For example, rational inner functions on $\mathbb{D}^d$ serve as the transfer functions for $d$-dimensional, dissipative, linear, discrete-time input-state-systems with finite-dimensional state spaces, see \cite{BSV05, BK16}. Similarly, rational inner functions have applications to multidimensional lossless networks, which in turn are related to multidimensional wave digital filters \cite{Kum02}.
 
Although they generalize finite Blaschke products, RIFs are much more complicated than their one-variable counterparts.  Most importantly, unlike finite Blaschke products, they can have singularities on the boundary of $\mathbb{D}^d$, which can take several forms. For instance, consider
 \[ \phi(z) : = \frac{3z_1z_2z_3-z_1 -z_2-z_3}{3-z_1-z_2-z_3} \ \ \ \text{ and } \ \ \ \psi(z) :=\frac{3z_1z_2z_3 -z_3-z_1z_2}{3-z_1z_2-z_3}.\]
Then if $\phi =\frac{\tilde{p}}{p}$ and $\psi = \frac{\tilde{q}}{q}$, we can see that $\mathcal{Z}_p \cap \mathbb{T}^3 = \{ (1,1,1)\}$ and $\mathcal{Z}_q \cap \mathbb{T}^3 = \{ (e^{i\theta}, e^{-i\theta}, 1): \theta \in [0, 2\pi)\}$. Morally, this illustrates the two possibilities in $3$-variables because $\dim \left( \mathcal{Z}_p \cap \mathbb{T}^d \right) \le d-2$ in general, and hence, $\mathcal{Z}_p \cap \mathbb{T}^3$ should heuristically be composed of points and curves.
 
In this paper, we study the following concrete version of the general question alluded to earlier:
 \begin{equation} \label{eqn:Q1} \text{ How ``singular'' can a RIF on $\mathbb{D}^d$ be near its singularities on $\mathbb{T}^d$?} \end{equation}
 Some results are known, especially in two dimensions. For example, Knese showed in \cite[Corollary 14.6]{Kne15}  that every RIF has a nontangential 
 boundary value at every $\tau \in \mathbb{T}^d$, including its singular points. Stronger notions of nontangential regularity (basically, nontangential polynomial approximation) for two-variable RIFs were  
 studied by M\McC Carthy and Pascoe in \cite{MP17}.  Similarly, Knese \cite{Kne15} studied integral behavior of stable polynomials $p \in \mathbb{C}[z_1, z_2]$, i.e. the denominators of RIFs on $\mathbb{D}^2$, and in particular, characterized the $q \in \mathbb{C}[z_1,z_2]$ for which $p/q \in L^2(\mathbb{T}^2).$ 

 In this paper, we  approach \eqref{eqn:Q1} from several angles; we both conduct an analytic study of the critical integrability indices of partial derivatives $\frac{\partial \phi}{\partial z_1}, \dots, \frac{\partial \phi}{\partial z_d}$ near singularities on $\mathbb{T}^d$ and also conduct a  geometric study of the structure of a RIF's unimodular level sets near the singularities on $\T^d$. 
 
The inherent interest and importance of rational inner functions, and the remarkable intricacy of their boundary behavior, serves as the primary motivation for our present study. One additional reason to pursue this study is that integrability results could provide an invariant that differentiates between the boundary behavior of RIFs and that of locally inner functions. This could then lead to a possible obstruction to the open question of whether locally inner functions can be approximated by RIFs on $\mathbb{T}^d$ in a tractable way. This question is connected to the work of Agler, M\McC Carthy, and Young in \cite{AMY12}, where they characterize two-variable locally matrix monotone functions. For two-variable rational inner functions, their local results imply global matrix monotonicity. Boundary approximation via rational inner functions has been suggested as a possible way to extend their local results to more general characterizations of global matrix monotonicity.

 \subsection{Two-variable RIFs.} In \cite{BPS18, BPS19}, the authors conducted an in-depth study of \eqref{eqn:Q1} for RIFs $\phi$ on $\mathbb{D}^2.$ Let us briefly discuss the most salient results, as they will inform our study in higher dimensions. The investigations revolved around two geometric objects associated to $\phi$:
 \begin{itemize}
\item[(1)] The zero set of $\phi = \frac{\tilde{p}}{p}$, i.e. $\mathcal{Z}_{\tilde{p}}$, restricted to the faces of the bidisk: $(\mathbb{T} \times\mathbb{D} )\cup (\mathbb{D} \times\mathbb{T} )$.
\item[(2)]  The unimodular level sets of $\phi$ on $\mathbb{T}^2$; i.e.~for $\lambda \in \mathbb{T}$,  the closure of the set $\{ (\zeta_1, \zeta_2) \in \mathbb{T}^2: \phi(\zeta)= \lambda \}$, which we denote by $\mathcal{C}_{\lambda}.$
\end{itemize}
In  \cite{BPS18}, we focused on (1), namely the geometry of $\mathcal{Z}_{\tilde{p}}$ on the face of the bidisk near a singular point $\tau = (\tau_1, \tau_2) \in \mathbb{T}^2$. To restrict to $\mathbb{D} \times\mathbb{T}$, fix $\zeta_2 \in \mathbb{T}$ near $\tau_2$ and let $\alpha_1(\zeta_2), \dots, \alpha_m(\zeta_2)$ denote the points in $\mathbb{D}$ where $\tilde{p}(\alpha_i(\zeta_2), \zeta_2)=0.$ We showed that  $\mathcal{Z}_{\tilde{p}} \cap (\mathbb{D} \times \mathbb{T})$ approaches $\tau$ in the following way; there is a positive, even integer $K_1$ so that 
\[ \min_{1 \le i \le m}\left( 1-|\alpha_i(\zeta_2)| \right) \approx | \tau_2 -\zeta_2|^{K_1},\]
for all $\zeta_2 \in \mathbb{T}$ sufficiently close to $\tau_2$.  This $K_1$ is called the \emph{$z_1$-contact order of $\phi$ at $\tau$} and, after taking the maximum $K_1$ over all the singularities of $\phi$, it characterizes the critical integrability index of $\frac{\partial \phi}{\partial z_1}$; for $1 \le \p < \infty$, 
\[ \tfrac{\partial \phi}{\partial z_1} \in L^\p(\mathbb{T}^2) \ \  \text{ if and only if }  \ \ K_1  < \tfrac{1}{\p-1}.\]
We moreover derived a surprising inverse relationship between better non-tangential regularity of $\phi$ (i.e. non-tangential approximation by a higher degree polynomial) and higher derivative integrability (i.e. smaller contact orders). See Theorems $3.3$ and $4.1$ and Corollary $7.2$ in \cite{BPS18}.

In \cite{BPS19}, we focused on (2), the geometry of the $\mathcal{C}_{\lambda}$ on $\mathbb{T}^2$ near a singular point. As the singular points in dimension two are isolated, it is easy to show that each $\mathcal{C}_{\lambda}$ contains $\mathcal{Z}_{\tilde{p}} \cap \mathbb{T}^2$ in its closure.  Then it is reasonable to study how the $\mathcal{C}_{\lambda}$ approach singular points $\tau$. 
Using properties of RIFs and Puiseux series expansions, we showed that near each singular point $\tau \in \mathbb{T}^2$, each $\mathcal{C}_{\lambda}$ can be parameterized by analytic functions
\[  z_1 = \psi^{\lambda}_{1}(z_2), \dots, z_1 =  \psi^{\lambda}_{L}(z_2)\]
centered at $\tau_2$. Then another way to define the $z_1$-contact order of $\phi$ at $\tau$ is the following;
  $K_1$ is the maximal order of vanishing of
$\psi^{\lambda}_i-\psi^{\mu}_j$ at $\tau_2$ for two generic $\mu,\lambda \in \mathbb{T}$. This result made it possible to use pictures of the unimodular level curves to 
observe quantitative integrability facts about RIFs. It also allowed us to link the $z_1$- and $z_2$-contact orders of $\phi$ and subsequently conclude that 
\[ \tfrac{\partial \phi}{\partial z_1} \in L^\p(\mathbb{T}^2) \ \  \text{ if and only if } \ \  \tfrac{\partial \phi}{\partial z_2} \in L^\p(\mathbb{T}^2).\]
Namely, the two partial derivatives of $\phi$ must have the same critical integrability indices. We also conducted a finer analysis of the geometry of $\mathcal{C}_\lambda$ but will not discuss that further here. See Theorems $2.8$, $3.1$, and $4.1$ in \cite{BPS19} for more details.
\subsection{Main Results}

In this paper, we explore similar questions for $d$-variable RIFs $\phi = \frac{\tilde{p}}{p}$. Naively, one might expect results quite similar to the two-dimensional case, perhaps with level curves replaced by level hypersurfaces and with methods only slightly modified because of additional multi-indices. This turns out not to be the case, and one of the main themes of this paper is that many of the nice features uncovered in \cite{BPS18, BPS19} are absent in higher dimensions.

In several complex variables, the move from one to two variables is often easier than going from two to three or more variables. For instance, B\'ezout's theorem \cite{FulBook} and And\^o's inequality \cite{AM02} are key two-variable results that provide useful generalizations of one-variable results but lack tractable three-variable counterparts. An example of a more specialized issue is that every two variable RIF has a unitary transfer function realization but this is no longer the case in $d\geq 3$ variables. Moreover, even when such a realization does hold, it need not be as minimal as one might expect, see \cite{Kne11Publ}. 

Our study provides further examples of this general higher-dimensional phenomenon. Here are two specific examples of difficulties that arise when increasing the dimension to $d\geq 3$:\\
\begin{itemize}
\item[i.] The singular set $S = \mathcal{Z}_p \cap \mathbb{T}^d$ becomes much more complicated. It satisfies $\dim S \le d-2$, but the precise dimension and overall structure can vary by RIF and along components. Additionally, $\mathcal{Z}_p$ can no longer be described using Puiseux series and even when we can locally describe $\mathcal{Z}_p$ as $z_3 = \psi^0(z_1, z_2)$, the function $\psi^0$ can be very discontinuous. \\

\item[ii.] The unimodular level sets $\mathcal{C}_{\lambda}$ cannot generally be parameterized by analytic functions. Indeed, for many three-variable RIFs $\phi$, the $\mathcal{C}_{\lambda}$ will have discontinuities at points on $\mathcal{Z}_p \cap \mathbb{T}^3.$ In two-variables, pictures of unimodular level curves conveyed quantitative integrability information about $\phi$, but already in three variables, the pictures are much more complicated.\\
\end{itemize}

\noindent  Not only do our methods become less effective or even inapplicable, but some of our key two-variable results actually fail in higher dimensions, see Examples \ref{ex:vl1} and \ref{ex:curve2}: in general, different partials $\frac{\partial \phi}{\partial z_j}$ of a RIF exhibit different critical integrability, and as mentioned above, unimodular level sets need not admit a smooth or even continuous parametrization. Because of this reality, our paper has two complementary goals: \\

\begin{itemize}
\item[Goal 1.] Establish results for $d$-variable RIFs concerning their singular sets $\mathcal{Z}_p$, their unimodular level sets $\mathcal{C}_{\lambda}$, and the integrability properties of their partial derivatives on $\mathbb{T}^d$.\\

\item[Goal 2.] Produce examples illustrating the various complexities that arise in the $d$-variable case. \\
\end{itemize}

\noindent We now summarize our main findings. In Section \ref{sec:gen}, we tackle Goal $1$ and establish two important facts about $d$-variable rational inner functions. First, in Theorem \ref{thm:GenInt}, we 
characterize the integrability of $\frac{\partial \phi}{\partial z_d} $ in terms of how $\mathcal{Z}_{\tilde{p}} \cap (\mathbb{T}^{d-1} \times \mathbb{D})$ approaches $\mathbb{T}^{d}$.
Specifically, for each $\hat{\zeta} \in \mathbb{T}^{d-1}$, let $\delta(\phi, \hat{\zeta}) $ denote the distance between $\mathcal{Z}_{\tilde{p}} \cap (\{\hat{\zeta}\} \times \mathbb{D})$ and $\mathbb{T}^{d}$. For
each $ x >0$, define
\[ \Omega_x = \left \{ \hat{\zeta} \in \mathbb{T}^{d-1}: \delta(\phi, \hat{\zeta}) < \tfrac{1}{x} \right\}.\]
Then, letting $\mu$ denote Lebesgue measure, we establish:\\

\noindent \textbf{Theorem \ref{thm:GenInt}.} \emph{For $1\le \p < \infty$, $\frac{\partial \phi}{\partial z_d} \in L^\p(\mathbb{T}^d)$ if and only if 
$\displaystyle \int_1^{\infty} \mu \left( \Omega_x \right)  \ x^{\p-2} \ dx < \infty.$} \\
It should be noted that the argument that leads to \cite[Proposition 4.4]{BPS18} extends to the $d$-variable setting: for any RIF and any index $j \in \{1,\ldots,d\}$,
\[\left\|\tfrac{\partial  \phi}{\partial z_j}\right\|_{L^1(\mathbb{T}^d)}=n_j<\infty,\]
where $n_j$ is the $j$-degree of $\phi$. Thus, there is no loss in assuming $\p\geq 1$ throughout.
In  the two-variable setting, Theorem \ref{thm:GenInt} combined with the definition of contact order from \cite{BPS18} gives the integrability results from \cite{BPS18}. See Remark \ref{rem:2var} for details.
In Section \ref{sec:gen}, we also study the relationship between the singular set  $\mathcal{Z}_p \cap \mathbb{T}^d$ and the unimodular level sets $\mathcal{C}_{\lambda}$ of $\phi$. In particular, we prove that every unimodular level set of $\phi$ on $\mathbb{T}^d$ goes though the singular set of $\phi$. This demonstrates that the unimodular level sets are a viable tool for studying ``how singular'' a RIF $\phi$ is near its singular set on $\mathbb{T}^d$.

In Sections  \ref{sec:3var1} and  \ref{sec:3var2}, we restrict to irreducible degree $(m,n,1)$ RIFs on $\mathbb{D}^3$ with singularities on $\mathbb{T}^3$. This enables us to sidestep certain technical difficulties and allows us to perform a finer analysis on their integrability, zero set, and unimodular level set behaviors. For such RIF $\phi = \frac{\tilde{p}}{p}$, Section \ref{sec:3var1} examines the integrability of $\frac{\partial \phi}{\partial z_3}$. First in \eqref{eqn:psi}, we point out that there is a function $\rho_{\phi}$ such that 
\begin{equation} \label{intro:eqn}  \int_{\mathbb{T}^3} \Big | \tfrac{\partial \phi}{\partial z_3} (\zeta ) \Big|^\p |d\zeta | 
\approx \iint_{[-\pi,\pi]^2} \left | \rho_{\phi}(\theta_1, \theta_2) \right |^{1-\p} d\theta_1 d\theta_2. \end{equation}
If $\mathcal{Z}_{p} \cap \mathbb{T}^3$ does not contain any vertical lines of the form $\{(\zeta_1, \zeta_2)\} \times \mathbb{T}$, then $\rho_{\phi}$ is real analytic. This puts the question of when \eqref{intro:eqn} is finite in the setting of Greenblatt's results from \cite{G06}. The required definitions and results from  \cite{G06} are detailed in Subsection \ref{subsec:GT}, particularly in Theorem \ref{thm:Green}. Then in Subsection \ref{subsec:int}, we deduce properties about the Taylor series expansions of the $\rho_{\phi}$ associated to our RIFs. Combining these results with Theorem \ref{thm:Green} allows us to deduce some integrability properties of RIFs, for example:\\

\noindent \textbf{Corollary \ref{cor:int}}.  \emph{Assume $\phi = \frac{\tilde{p}}{p}$ is a singular degree $(m,n,1)$ irreducible RIF and $\mathcal{Z}_{\tilde{p}}\cap \mathbb{T}^3$ does not contain any vertical lines $\{(\zeta_1, \zeta_2)\} \times \mathbb{T}$. Then 
\begin{itemize}
\item[i.] If $\dim\left(\mathcal{Z}_p \cap \mathbb{T}^3\right)=0$, then $\frac{\partial \phi}{\partial z_3} \not \in L^\p(\mathbb{T}^3)$ for $\p \ge 2.$
\item[ii.] If $\dim\left(\mathcal{Z}_p \cap \mathbb{T}^3\right)=1$, then $\frac{\partial \phi}{\partial z_3} \not \in L^\p(\mathbb{T}^3)$  for $\p \ge \alpha,$ for some $\alpha <2.$\
\end{itemize}}

However, there are significant limitations to these methods. For example, RIFs whose singular sets contain vertical lines can have discontinuous $\rho_{\phi}$, see Example \ref{ex:vl2}. Then Theorem \ref{thm:Green} does not apply to them. Similarly, Theorem \ref{thm:Green} only says when $1/\Delta(\rho_{\phi})$ will give the integrability index, where $\Delta(\rho_{\phi})$ is the Newton distance of $\rho_{\phi}$. For some RIFs, $1/\Delta(\rho_{\phi})$ will not yield the integrability index of $\rho_{\phi}$ and so, other methods or a direct analysis will be required. See for instance Examples \ref{ex:lifted} and \ref{ex:curveiso}. 

 Section \ref{sec:3var2} examines the boundary values and unimodular level sets of irreducible degree $(m,n,1)$ RIFs, refining the results in \cite{Kne15}. In particular, in Theorem \ref{thm:lim}, we study the nontangential boundary values of such $\phi$ on $\mathbb{T}^d$, with an emphasis on points in $\mathcal{Z}_p \cap \mathbb{T}^3$; surprisingly, the boundary values exhibit different behavior depending on whether $\mathcal{Z}_p \cap \mathbb{T}^3$ contains a vertical line or not. We then study the unimodular level sets $\mathcal{C}_{\lambda}$ and as part of Theorem \ref{lem:RIF} prove: \\
 
 \noindent \textbf{Theorem \ref{lem:RIF}* }\emph{ Given any $\lambda \in \mathbb{T}$, the unimodular level set $\mathcal{C}_{\lambda}$ is composed of a finite number of vertical lines $\{(\zeta_1, \zeta_2)\} \times \mathbb{T}$ and a surface of the form:
\[ \tau_3= \Psi_{\lambda}(\tau_1, \tau_2)  \ \ \text{ for } \ \ (\tau_1, \tau_2)\in \mathbb{T}^2,\]
where $1/\Psi_{\lambda}$ is a two-variable RIF. \\
}

In Section \ref{sec:higherzoo} and throughout the paper, we also tackle Goal 2. Specifically, we illustrate theorems and disprove a number of potential conjectures using nontrivial RIF examples. Here is a selection of important examples and some of the information that they convey.\\

\begin{enumerate}
\item Example \ref{ex:vl1} provides a singular, irreducible degree $(1,1,1)$ RIF $\phi$ such that $\frac{\partial \phi}{\partial z_3} \in L^\p(\mathbb{T}^3)$ for all $1 \le \p <\infty$ but $\frac{\partial \phi}{\partial z_1},$ $\frac{\partial \phi}{\partial z_2}  \in  L^\p(\mathbb{T}^3)$ if and only if $\p < \frac{3}{2}.$ This shows that in three variables, the integrability indices for partial derivatives are not necessarily equal. \\

\item Example \ref{ex:curve} provides a degree $(2,1,1)$ RIF whose generic unimodular level sets each have two singular points. This shows that in three variables, the unimodular level sets associated to RIFs need not be smooth.\\

\item Examples \ref{ex:iso1}, \ref{ex:curve}, \ref{ex:curve2}, \ref{ex:curveiso},  and \ref{ex:curveiso2}  illustrate the possible forms that $\mathcal{Z}_p \cap \mathbb{T}^3$ can take and their interplay with integrability.  In particular, these examples possess an isolated singularity, a curve of singularities, a curve of singularities, a combination of the two, and an isolated singularity, respectively. While the first $\frac{\partial \phi}{\partial z_3}  \in L^\p(\mathbb{T}^3)$ if and only if $\p<2$, in Examples \ref{ex:curve}, \ref{ex:curveiso} and \ref{ex:curveiso2} we have $ \frac{\partial \phi}{\partial z_3} \in L^\p(\mathbb{T}^3)$ if and only if $\p<\frac{3}{2}$, and in Example \ref{ex:curve2}, the integrability range (for $\frac{\partial \phi}{\partial z_2}$) is $\p<\frac{5}{4}$. \\

\end{enumerate}
\noindent These examples indicate that there appears to be no easy way to characterize integrability in three or more variables just by studying the basic properties of the singular set $\mathcal{Z}_p$ and the unimodular level sets $\mathcal{C}_{\lambda}$.

\section{General RIFs} \label{sec:gen}
\subsection{Integrability.} 
Let $\phi = \frac{\tilde{p}}{p}$ be a general rational inner function on $\mathbb{D}^d$ with $\deg \phi = (n_1, \dots, n_d)$, and let $S = \mathcal{Z}_{p} \cap \mathbb{T}^d$ denote its singular set on $\mathbb{T}^d$. Note that, by the definition of $\tilde{p}$, we also 
have $S=\mathcal{Z}_{\tilde{p}}\cap\mathbb{T}^d$. In this section, we characterize the critical integrability index for the partial derivatives $ \tfrac{\partial \phi}{\partial z_i}$ using properties of $\mathcal{Z}_{\tilde{p}}$. First, without loss of generality, restrict to the single partial derivative $  \tfrac{\partial \phi}{\partial z_d}$.  Then, the integrability of this partial derivative will be governed by how $\mathcal{Z}_{\tilde{p}} \cap (\mathbb{T}^{d-1} \times \mathbb{D})$ approaches $S$.

To make this precise, we will associate $\phi$ to a family of finite Blaschke products parameterized by most $\hat{\zeta} \in \mathbb{T}^{d-1}$. To define the exceptional set, let $\pi(S)$ denote the projection of $S$ onto $\mathbb{T}^{d-1}$:
\[ \pi(S) = \left \{ \hat{\zeta} \in \mathbb{T}^{d-1}: \tilde{p}(\hat{\zeta}, \zeta_d) = 0 \text{ for some } \zeta_d \in \mathbb{T} \right \}.\]
Since $\dim S \le d-2$, $\dim \pi(S) \le d-2$ and so, has Lebesgue measure zero on $\mathbb{T}^{d-1}$.
Fix $\hat{\zeta} \in \mathbb{T}^{d-1} \setminus \pi(S)$ and define the sliced function
\[\phi_{\hat{\zeta}}(z_d) := \phi(\hat{\zeta}, z_d) = \frac{\tilde{p}(\hat{\zeta}, z_d)}{p(\hat{\zeta}, z_d)}.\]
Let $\{\hat{z}_n\} \subseteq \mathbb{D}^{d-1}$ denote a sequence converging to 
$\hat{\zeta}$.  Then each one-variable polynomial $p_{\hat{z}_n}(z):=p(\hat{z}_n,z)$ is nonvanishing on $\mathbb{D}$ and so, Hurwitz's theorem implies that 
$p_{\hat{\zeta}}$ is either nonvanishing on $\mathbb{D}$ or identically zero. Since $\hat{\zeta} \not \in \pi(S)$, the limit polynomial $p_{\hat{\zeta}}$ must be nonvanishing on $\mathbb{D}$. 
Moreover, 
\[ | \tilde{p}(\hat{\zeta}, \zeta_d)| = | {p(\hat{\zeta}, \zeta_d)}| \ne 0\]
for all $\zeta_d \in \mathbb{T}$. This implies that $\phi_{\hat{\zeta}}$ 
is a finite Blaschke product with $n_{\hat{\zeta}} :=\deg \phi_{\hat{\zeta}}  \le  n_d.$ To study $\mathcal{Z}_{\tilde{p}}$, let $\alpha_1, \dots, \alpha_{n_{\hat{\zeta}}}\in \mathbb{D}$ denote the zeros of $\phi_{\hat{\zeta}}$.  Define the minimal distance of these zeros from $\mathbb{T}$ by 
\[ \delta(\phi, \hat{\zeta}) = \min_{1 \le i \le n_{\hat{\zeta}}} \big( 1 - |\alpha_i| \big).\]
Note that $\delta(\phi, \hat{\zeta})$ is measuring the distance 
of $\mathcal{Z}_{\tilde{p}} \cap (\{\hat{\zeta}\} \times \mathbb{D})$ to $\mathbb{T}^{d}$. For
each $ x >0$, define
\[ \Omega_x = \left \{ \hat{\zeta} \in \mathbb{T}^{d-1}: \delta(\phi, \hat{\zeta}) < \tfrac{1}{x} \right\}.\]
Then we can use this to control the derivative integrability of $\frac{\partial \phi}{\partial z_d}$ as follows:

\begin{theorem} \label{thm:GenInt} Let $\phi = \frac{\tilde{p}}{p}$ be a RIF on $\mathbb{D}^d$. Then for $ 1\le \p < \infty$, $\frac{\partial \phi}{\partial z_d} \in L^\p(\mathbb{T}^d)$ if and only if 
\[ \int_1^{\infty} \mu \left( \Omega_x \right)  \ x^{\p-2} \ dx < \infty.\]
\end{theorem}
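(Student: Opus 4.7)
The plan is to reduce the $d$-dimensional integrability of $\frac{\partial \phi}{\partial z_d}$ to a one-dimensional estimate on each Blaschke-product slice $\phi_{\hat\zeta}$, obtaining an equivalent finiteness condition of the form $\int_{\mathbb{T}^{d-1}}\delta(\phi,\hat{\zeta})^{1-\p}\,|d\hat{\zeta}| < \infty$, and then to convert this into the layer-cake integral in the variable $x$. The crucial analytic input is a two-sided estimate, uniform in $\hat\zeta$, for the $L^{\p}(\mathbb{T})$-norm of the derivative of a finite Blaschke product in terms of the distance from its nearest zero to $\mathbb{T}$.

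First I would apply Fubini-Tonelli, using that $\pi(S) \subset \mathbb{T}^{d-1}$ has measure zero, to write
\begin{equation*}
\int_{\mathbb{T}^d}\bigl|\tfrac{\partial \phi}{\partial z_d}(\zeta)\bigr|^{\p}\,|d\zeta| \;=\; \int_{\mathbb{T}^{d-1}\setminus\pi(S)} \int_{\mathbb{T}} |\phi_{\hat{\zeta}}'(\zeta_d)|^{\p}\,|d\zeta_d|\,|d\hat{\zeta}|.
\end{equation*}
As noted in the set-up preceding the theorem, for each $\hat{\zeta} \notin \pi(S)$ the slice $\phi_{\hat{\zeta}}$ is a finite Blaschke product of some degree $n_{\hat\zeta} \le n_d$ with zeros $\alpha_1,\dots,\alpha_{n_{\hat\zeta}} \in \mathbb{D}$ satisfying $\min_i(1-|\alpha_i|)=\delta(\phi,\hat{\zeta})$. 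Using the identity $|\phi_{\hat{\zeta}}'(\zeta_d)| = \sum_i (1-|\alpha_i|^2)/|\zeta_d-\alpha_i|^2$ on $\mathbb{T}$, together with the standard one-variable estimate $\int_{\mathbb{T}}(1-|\alpha|^2)^{\p}/|\zeta-\alpha|^{2\p}\,|d\zeta|\approx (1-|\alpha|)^{1-\p}$ (valid for $\p>1$), I would derive the two-sided bound
\begin{equation*}
c_{\p}\,\delta(\phi,\hat\zeta)^{1-\p} \;\le\; \int_{\mathbb{T}} |\phi_{\hat{\zeta}}'(\zeta_d)|^{\p}\,|d\zeta_d| \;\le\; C_{n_d,\p}\,\delta(\phi,\hat\zeta)^{1-\p};
\end{equation*}
the upper bound comes from Minkowski's inequality applied to the decomposition of $|\phi'_{\hat\zeta}|$, together with the fact that among the finitely many terms $(1-|\alpha_i|)^{1-\p}$ the maximum is $\delta^{1-\p}$, and the lower bound follows by retaining only the single term corresponding to the zero closest to $\mathbb{T}$. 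The endpoint $\p=1$ is handled separately via the identity $\|\partial\phi/\partial z_d\|_{L^1(\mathbb{T}^d)} = n_d$ recorded in the excerpt.

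Integrating the pointwise comparison over $\hat{\zeta}$ and invoking the layer-cake formula then finishes the argument: since $\delta(\phi,\hat\zeta)^{1-\p}\ge 1$ and $\{\hat\zeta : \delta(\phi,\hat\zeta)^{1-\p}>t\} = \Omega_{t^{1/(\p-1)}}$ for $t\ge 1$, substituting $x=t^{1/(\p-1)}$ yields
\begin{equation*}
\int_{\mathbb{T}^{d-1}}\delta(\phi,\hat\zeta)^{1-\p}\,|d\hat\zeta| \;=\; \mu(\mathbb{T}^{d-1}) + (\p-1)\int_1^{\infty} \mu(\Omega_x)\, x^{\p-2}\,dx,
\end{equation*}
so finiteness of the left side (equivalently, of $\|\partial\phi/\partial z_d\|_{L^\p}^\p$) is equivalent to finiteness of the integral appearing in the theorem. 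The main obstacle I anticipate is establishing the two-sided Blaschke estimate with constants depending only on $\p$ and $n_d$, independent of the specific positions of the zeros and of the possibly varying slice degree $n_{\hat\zeta}$; once the Poisson-type bound is accepted, the remainder of the argument is a standard distribution-function manipulation.
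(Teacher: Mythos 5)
Your proposal is correct and follows essentially the same route as the paper: Fubini to reduce to one-variable slices, the two-sided estimate $\int_{\mathbb{T}}|b'|^{\p}\approx \delta(b)^{1-\p}$ for finite Blaschke products of bounded degree (which the paper records as Lemma \ref{lem:fbp} and cites from \cite{BPS18} rather than re-deriving via the Poisson-kernel sum as you do), and then a layer-cake/distribution-function manipulation to convert $\int_{\mathbb{T}^{d-1}}\delta^{1-\p}$ into $\int_1^\infty\mu(\Omega_x)x^{\p-2}\,dx$. Your treatment of the $\p=1$ endpoint and the explicit $(\p-1)$ Jacobian factor are slightly more careful than the paper's display, but the argument is the same.
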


To prove this, we require the following lemma. It is likely well known, but also easily follows from the arguments in Lemmas $4.2$ and $4.3$ in \cite{BPS18}. 

\begin{lemma} \label{lem:fbp} Let $b$ be a finite Blaschke product with zeros $\alpha_1, \dots, \alpha_{n} \in \mathbb{D}$ and define $\delta(b) = \min_{1 \le i \le n} (1 - |\alpha_i|).$
Then for $1 \le \p < \infty$
\[\int_{\mathbb{T}} |b'(z)|^\p \ |dz| \approx  |\delta(b)|^{1-\p},\]
where the implied constant depends on $n$ and $\p$.
\end{lemma}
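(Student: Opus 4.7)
\medskip

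\textbf{Proof plan for Lemma \ref{lem:fbp}.} The plan is to exploit the explicit formula for the boundary modulus of the derivative of a finite Blaschke product and reduce everything to a one-parameter integral of Poisson-kernel type. Writing $b(z)=\prod_{i=1}^{n}\frac{z-\alpha_i}{1-\bar\alpha_i z}$, differentiating $\log b$ (valid on $\mathbb{T}$ since $|b|=1$ there) gives the familiar identity
\[
|b'(e^{i\theta})|=\sum_{i=1}^{n}\frac{1-|\alpha_i|^2}{|1-\bar\alpha_i e^{i\theta}|^2},
\]
a sum of nonnegative terms. Each summand is a scaled Poisson kernel and therefore a good model for the local behavior of $|b'|$ near the boundary projection of $\alpha_i$. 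My main quantitative tool will be the standard estimate
\[
I_p(\alpha):=\int_{-\pi}^{\pi}\frac{d\theta}{|1-\bar\alpha e^{i\theta}|^{2p}}\;\asymp\;(1-|\alpha|)^{1-2p}\qquad(p>1/2),
\]
which one proves by writing $\alpha=(1-\delta_i)e^{i\phi_i}$, noting that $|1-\bar\alpha e^{i\theta}|^2\asymp \delta_i^2+(\theta-\phi_i)^2$ for $|\theta-\phi_i|\le \pi$, and then rescaling via $u=(\theta-\phi_i)/\delta_i$.

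For the \emph{upper bound}, I would apply the power-mean inequality (or Hölder) to obtain
\[
|b'(e^{i\theta})|^p\;\le\; n^{p-1}\sum_{i=1}^{n}\frac{(1-|\alpha_i|^2)^p}{|1-\bar\alpha_i e^{i\theta}|^{2p}},
\]
integrate term by term, and use $I_p(\alpha_i)\asymp (1-|\alpha_i|)^{1-2p}$ together with $1-|\alpha_i|^2\asymp 1-|\alpha_i|$ to conclude
\[
\int_{\mathbb{T}}|b'|^p\,|dz|\;\lesssim_n\;\sum_{i=1}^{n}(1-|\alpha_i|)^{1-p}\;\lesssim_n\;\delta(b)^{1-p},
\]
where the last inequality uses that the exponent $1-p\le 0$, so $(1-|\alpha_i|)^{1-p}\le \delta(b)^{1-p}$ for every $i$.

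For the \emph{lower bound}, let $\alpha_*$ be a zero realizing the minimum $\delta(b)=1-|\alpha_*|$. Since all summands in the formula for $|b'|$ are nonnegative,
\[
|b'(e^{i\theta})|\;\ge\;\frac{1-|\alpha_*|^2}{|1-\bar\alpha_* e^{i\theta}|^2},
\]
so
\[
\int_{\mathbb{T}}|b'|^p\,|dz|\;\ge\;(1-|\alpha_*|^2)^{p}\,I_p(\alpha_*)\;\asymp\;\delta(b)^{p}\cdot\delta(b)^{1-2p}\;=\;\delta(b)^{1-p},
\]
with an absolute implied constant. This lower bound requires $p>1/2$, which is automatic since $p\ge 1$.

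The only remaining point is the boundary case $p=1$, where $\delta(b)^{1-p}=1$ and the claim reduces to $\int_{\mathbb{T}}|b'||dz|\asymp n$. This follows directly from the identity above and the fact that each Poisson kernel integrates to $2\pi$, giving $\int_{\mathbb{T}}|b'||dz|=2\pi n$. I expect the only mildly technical step to be the verification of the asymptotic $I_p(\alpha)\asymp (1-|\alpha|)^{1-2p}$, which is a direct rescaling computation and is the place where the dependence on $p$ (but not on $n$) of the implied constant enters; the remainder of the argument is bookkeeping.
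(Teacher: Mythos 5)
Your proof is correct, and it follows essentially the route the paper has in mind: the paper gives no argument of its own but defers to Lemmas 4.2 and 4.3 of \cite{BPS18}, which rest on the same identity $|b'(e^{i\theta})|=\sum_i \frac{1-|\alpha_i|^2}{|1-\bar\alpha_i e^{i\theta}|^2}$ and the same rescaling estimate for $\int_{\mathbb{T}}|1-\bar\alpha e^{i\theta}|^{-2\p}\,d\theta$, with the dominant (minimal-distance) zero giving the lower bound and term-by-term integration giving the upper bound. So your write-up is a sound, self-contained version of the cited argument and nothing further is needed.
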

\begin{proof}[Proof of Theorem \ref{thm:GenInt}] Since $\pi(S) \times \mathbb{T}$ has Lebesgue measure $0$ on $\T^d$, we have
\[ \begin{aligned}
\int_{\mathbb{T}^d} \left | \tfrac{\partial \phi}{\partial z_d} (\zeta ) \right|^\p \ | d\zeta | 
& = \int_{(\mathbb{T}^{d-1} \setminus \pi(S)) \times \mathbb{T}} \left | \tfrac{\partial \phi}{\partial z_d} (\zeta ) \right|^\p \ | d\zeta | \\
& =  \int_{\mathbb{T}^{d-1}\setminus \pi(S)} \int_{\mathbb{T}}    \left | \phi_{\hat{\zeta}}'(\zeta_d) \right|^\p \ |d\zeta_d|  | d \hat{\zeta} |  \\
&\approx  \int_{\mathbb{T}^{d-1}\setminus \pi(S)} \left |  \delta(\phi, \hat{\zeta})  \right |^{1-\p} \ | d \hat{\zeta} |  \\
& = \int_0^{\infty} \mu\left( \hat{\zeta} \in \mathbb{T}^{d-1} \setminus \pi(S): \delta(\phi, \hat{\zeta})^{-1} >x \right) \ x^{\p-2} \ dx \\
& = \int_0^{\infty} \mu\left( \hat{\zeta} \in \mathbb{T}^{d-1} \setminus \pi(S): \delta(\phi, \hat{\zeta}) < \tfrac{1}{x} \right) \ x^{\p-2} \ dx \\
& = \int_0^{\infty} \mu(\Omega_x ) \ x^{\p-2} \ dx \\
& \approx \int_1^{\infty} \mu(\Omega_x) \ x^{\p-2} \ dx,
\end{aligned}
\]
where we used Lemma \ref{lem:fbp} and the fact that each $\mu(\Omega_x) \le (2\pi)^{d-1}$.
\end{proof}

\begin{remark} \label{rem:2var} Theorem \ref{thm:GenInt} combined with the definition of contact order can be used to derive the two-variable integrability result from \cite{BPS18}. 
To see this, let $\phi = \frac{\tilde{p}}{p}$ be a two-variable RIF. Then, the singular set $\mathcal{Z}_{\tilde{p}} \cap \mathbb{T}^2$ is finite. To simplify notation (without changing the idea of the proof), assume  $\mathcal{Z}_{\tilde{p}} \cap \mathbb{T}^2 = \{(1,1)\},$ i.e. $\phi$ has a single singular point on $\mathbb{T}^2$ at $(1,1).$
Then by Theorem $3.3$ in \cite{BPS18}, there is a positive, even integer $K_1$, called the $z_1$-contact order of $\phi$, so that
\begin{equation} \label{eqn:CO}  \delta(\phi, \zeta_2) \approx |\zeta_2 - 1 |^{K_1},\end{equation}
for all $\zeta_2 \in \mathbb{T}$ sufficiently close to $1$, say in some neighborhood $V_{\epsilon}(1) \subseteq \mathbb{T}.$ It is easy to show that  there is some $c>0$ so that  for all $\zeta_2 \in \mathbb{T} \setminus V_{\epsilon}(1)$,
$\delta(\phi, \zeta_2) \ge c >0$. This implies that 
\[ \int_1^{\infty} \mu(\Omega_x) \ x^{\p-2} \ dx <\infty \text{ if and only if } \int_1^{\infty} \mu(\Omega_x \cap V_{\epsilon}(1)) \ x^{\p-2} \ dx <\infty.\]
A simple application of \eqref{eqn:CO} shows that for $x$ sufficiently large,
\[  \mu(\Omega_x \cap V_{\epsilon}(1)) \approx x^{-\tfrac{1}{K_1}}.\]
This immediately implies that 
\[  \int_1^{\infty} \mu(\Omega_x \cap V_{\epsilon}(1)) \ x^{\p-2} \ dx <\infty \text{ if and only if }  \int_1^{\infty}x^{\p-2-\tfrac{1}{K_1}} \ dx <\infty.\]
The last integral is finite if and only if $K_1 < \frac{1}{\p-1}$, which is exactly the characterization of $\frac{\partial \phi}{\partial z_1} \in L^\p(\mathbb{T}^2)$ that was proved in \cite{BPS18}. So, in the absence of a quantity like contact order, a result like Theorem \ref{thm:GenInt} is the best one might expect.
\end{remark}

\begin{remark}\label{rem:isovscurve}As is explained in Remark \ref{rem:2var}, a critical integrability index $1+\frac{1}{K_1}$ for $\frac{\partial \phi}{\partial z_1}$ (and, by extension, for $\frac{\partial \phi}{\partial z_2}$) can only be obtained by having at least one singular point with contact order $K_1$; note also that $K_1$ is always even. In higher dimensions,  the situation is more complicated and one could imagine realizing a particular rate of decay in $\mu(\Omega_x)$ in different ways.  Indeed, as we will see, there are RIFs with isolated singularities whose partial derivatives exhibit the same critical integrability as RIFs with curve singularities. See Examples \ref{ex:curve} and \ref{ex:curveiso2}.
\end{remark}
In practice, Theorem \ref{thm:GenInt} can be used to deduce the integrability properties of simple RIFs. Consider the following canonical example, which also appears in \cite{Kne11Ill}:

\begin{example} \label{ex:can} Define the RIF $\phi_d\colon \D^d\to \C$ by
\[\phi_d(z)=\frac{d\prod_{k=1}^dz_k-\sum_{j \in J}z_{j_1}\cdots z_{j_{d-1}}}{d-\sum_{k=1}^dz_k},\]
where $J=\{(j_1,\ldots, j_{d-1})\in \mathbb{N}^{d-1}\colon 1 \le j_1 < j_2 < \dots < j_{d-1} \le d\}.$
Each $\phi_d$ has a singularity at $(1,\ldots,1) \in \T^d$ and is smooth on $\T^d\setminus\{(1,\ldots,1)\}$. Using Theorem \ref{thm:GenInt}, we
can prove:
\[ \tfrac{\partial \phi_d}{\partial z_k}\in L^{\p}(\T^d) \ \  \text{ if and only if } \p<\tfrac{1}{2}(d+1). \]
By symmetry, it is enough to prove the result for $\frac{\partial \phi_d}{\partial z_d}$. Let $\vec{1} =(1,\dots, 1) \in \mathbb{T}^{d-1}$. Then by Theorem 
\ref{thm:GenInt} it suffices to show that for some $\epsilon >0$ and neighborhood $V_{\epsilon}(\vec{1})$ of $\vec{1}$ in $\mathbb{T}^{d-1}$,
 \[ \int_1^{\infty} \mu(\Omega_x \cap V_{\epsilon}(\vec{1})) \ x^{\p-2} \ dx < \infty \ \ \text{ if and only if } \ \ \p<\tfrac{1}{2}(d+1).\]
We first need to understand $\delta(\phi, \hat{\zeta}).$  Solving $\tilde{p}(z)=0$ for $z_d$, we find that 
\[z_d=\frac{\prod_{k=1}^{d-1}z_k}{d\prod_{k=1}^{d-1}z_k-\sum_{j \in \check{J}}z_{j_1}\cdots z_{j_{d-2}}}.\]
Here, $\check{J}=\{(j_1,\ldots, j_{d-2}) \in \mathbb{N}^{d-2}\colon 1 \le j_1 < j_2 < \dots < j_{d-2} \le d-1\}$.
Evaluating for $\hat{\zeta} = (\zeta_1, \dots, \zeta_{d-1})\in \T^{d-1}$, we have
\[\frac{1}{\left|z_d(\hat{\zeta})\right|^2}=d^2-2d\sum_{k=1}^{d-1}\mathrm{Re}(\zeta_k)+\sum_{j,k=1}^ {d-1}\zeta_j\overline{\zeta}_k\approx 1+2\sum_{k=1}^{d-1}\theta_k^2+2\sum_{1\leq j<k\leq d-1}\theta_j\theta_k+\mathcal{O}(\|\theta\|^3)\]
for $\theta \in [-\pi, \pi]^{d-1}$ near $(0,\dots, 0)$. Hence,
\[ \delta (\phi, \hat{\zeta})\approx 1-|z_d(\hat{\zeta})|^2 = 2\left(\sum_{k=1}^{d-1}\theta_k^2+\sum_{1\leq j<k \leq d-1}\theta_j\theta_k\right)+\mathcal{O}(\|\theta\|^3),\]
for $\theta \in [-\pi, \pi]^{d-1}$ near $(0,\dots, 0)$. 
The expression in parentheses is a real quadratic form whose associated symmetric matrix $M=(m_{j,k})_{j,k=1}^{d-1}$ satisfies
\[m_{j,k}=\left\{\begin{array}{cc}1 & \textrm{if}\,\, j=k\\
\frac{1}{2} &\textrm{otherwise}\end{array}\right..\]
Then $M$ is a symmetric circulant matrix whose eigenvalues are given by
\[\lambda_{\ell}=1+\tfrac{1}{2}\sum_{k=1}^{d-1}\omega_{\ell}^k,\]
where $\omega_{0}, \ldots, \omega_{d-1}$ are the $d$:th roots of unity (viz. \cite[Theorem 12.5.7]{GHBook}). Using the fact that $\sum_{k=0}^{d-1}\omega_{\ell}^k=0$ for $1\leq \ell \leq d-1$, we can deduce that $M$ has one eigenvalue equal to $\frac{d+1}{2}$ and $d-2$ eigenvalues equal to $\frac{1}{2}$.
Then by standard linear algebra,
\[\tfrac{1}{2}\sum_{k=1}^{d-1}\theta_k^2 \leq\sum_{k=1}^{d-1}\theta_k^2+\sum_{1\leq j<k \leq d-1}\theta_j\theta_k \leq \tfrac{d+1}{2}\sum_{k=1}^{d-1}\theta_k^2.\]
Using this, we can conclude that 
\[ \delta (\phi, \hat{\zeta})\approx  \sum_{k=1}^{d-1}\theta_k^2,\]
for $\hat{\zeta} \in \mathbb{T}^{d-1}$ in some small neighborhood $V_{\epsilon}(\vec{1})$. This can be used to show that for $x$ sufficiently large,
\[ \mu(\Omega_x \cap V_{\epsilon}(1)) \approx \mu\left ( \hat{\zeta} \in V_{\epsilon}(\vec{1})\setminus \{\vec{1}\}:  \sum_{k=1}^{d-1}\theta_k^2 \le \frac{1}{x} \right) \approx \left(\frac{1}{x}\right)^{\frac{d-1}{2}}.\]
Then 
\[ \int_1^{\infty} \mu(\Omega_x \cap V_{\epsilon}(\vec{1})) \ x^{\p-2} \ dx \approx \int_1^{\infty} x^{\p-2-(\frac{d-1}{2})} \ dx,  \]
which is finite if and only if $\p<\tfrac{1}{2}(d+1).$ \end{example}

\subsection{Unimodular Level Sets}

In \cite{BPS18, BPS19}, the integrability properties of two-variable RIFs $\phi$ were studied using the 
behavior of unimodular level sets of $\phi$ near singularities on $\mathbb{T}^2$. Whether in two or $d$ variables, there are two standard ways to define the unimodular level sets
associated to such a $\phi = \frac{\tilde{p}}{p}$:
\[ \mathcal{C}_{\lambda} := \text{Closure}\left( \{ \zeta \in \mathbb{T}^d: \phi(\zeta) = \lambda\}\right) \ \ \text{ and } \ \ \mathcal{L}_{\lambda} : = \{ \zeta \in \mathbb{T}^d : \tilde{p}(\zeta) = \lambda p(\zeta)\} = \mathcal{C}_{\lambda} \cup (\mathbb{T}^d \cap \mathcal{Z}_{p}),\]
where $\lambda \in \mathbb{T}$.
There is a trade-off between considering these two sets. The set $\mathcal{C}_{\lambda}$ seems more closely related to $\phi$ and so, is more useful when we are identifying properties of $\phi$. In contrast, $\mathcal{L}_{\lambda}$ is the zero set of the polynomial $\tilde{p} - \lambda p$ restricted to $\mathbb{T}^d$ and so, is fairly easy to study. In \cite{BPS19}, the authors observed that in the two-variable setting, \cite[Corollary $1.7$]{Pas17} implies that these two definitions coincide. In the $d$-variable setting, we require a more complicated argument, but the final result still holds:

\begin{theorem} \label{thm:CL} Let $\phi = \frac{\tilde{p}}{p}$ be a RIF on $\mathbb{D}^d$. Then $\mathcal{C}_{\lambda} = \mathcal{L}_{\lambda}$ for all $\lambda \in \mathbb{T}$. 
\end{theorem}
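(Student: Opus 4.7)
The inclusion $\mathcal{C}_\lambda \subseteq \mathcal{L}_\lambda$ is immediate: $\mathcal{L}_\lambda$ is the zero set on $\mathbb{T}^d$ of the polynomial $\tilde{p} - \lambda p$, hence closed, and visibly contains $\{\zeta \in \mathbb{T}^d : \phi(\zeta) = \lambda\}$; taking the closure gives the inclusion. For the converse, any $\zeta \in \mathcal{L}_\lambda \setminus S$ satisfies $p(\zeta) \neq 0$ and $\tilde{p}(\zeta) = \lambda p(\zeta)$, hence $\phi(\zeta) = \lambda$ and automatically $\zeta \in \mathcal{C}_\lambda$. Thus the problem reduces to showing $S \subseteq \mathcal{C}_\lambda$ for every $\lambda \in \mathbb{T}$: every singular point must be approached by genuine unimodular level-set points on $\mathbb{T}^d$.

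The plan is to induct on the dimension $d$, taking the base case $d = 2$ from \cite[Corollary 1.7]{Pas17}. For the inductive step, fix $\tau \in S$ and $\lambda \in \mathbb{T}$, and try to reduce to a lower-dimensional RIF. Choosing $j = d$, consider the slice
\[\psi(\hat{z}) := \phi(\hat{z}, \tau_d) = \frac{\tilde{p}(\hat{z}, \tau_d)}{p(\hat{z}, \tau_d)}, \quad \hat{z} = (z_1, \dots, z_{d-1}).\]
Atorality of $p$ prevents $(z_d - \tau_d)$ from dividing $p$, since otherwise $\mathcal{Z}_p \cap \mathbb{T}^d$ would contain the entire face $\mathbb{T}^{d-1} \times \{\tau_d\}$ of real dimension $d-1$, violating $\dim(\mathcal{Z}_p \cap \mathbb{T}^d) \le d-2$. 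Hence $p(\hat{z}, \tau_d) \not\equiv 0$, and Hurwitz's theorem applied to an approach $w \to \tau_d$ from $\mathbb{D}$ forces this polynomial to be nonvanishing on $\mathbb{D}^{d-1}$; together with $|\psi| = 1$ a.e.\ on $\mathbb{T}^{d-1}$, this makes $\psi$ a RIF on $\mathbb{D}^{d-1}$. Because $p(\tau) = 0$, the denominator of $\psi$ vanishes at $\hat{\tau} := (\tau_1, \dots, \tau_{d-1})$, so $\hat{\tau}$ is a singular point of $\psi$ provided this vanishing survives reduction to lowest terms. Then the inductive hypothesis produces $\hat{\zeta}_n \in \mathbb{T}^{d-1}$ with $\psi(\hat{\zeta}_n) = \lambda$ and $\hat{\zeta}_n \to \hat{\tau}$, and lifting yields $(\hat{\zeta}_n, \tau_d) \in \mathbb{T}^d$ with $\phi = \lambda$ converging to $\tau$.

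The principal obstacle is the caveat that axial slicing can eliminate the singularity upon reduction, as illustrated by the canonical $\phi_2 = (2z_1z_2 - z_1 - z_2)/(2 - z_1 - z_2)$ at $\tau = (1,1)$: both $\phi_2(z_1, 1)$ and $\phi_2(1, z_2)$ reduce to the constant $-1$. I therefore split into cases. If for \emph{some} coordinate $j$ the reduced slice $\psi_j$ retains $\hat{\tau}$ as a singularity, the induction finishes. If \emph{every} axial slice $\psi_j$ reduces to a unimodular constant $\mu_j$, then the entire face $\mathbb{T}^{d-1} \times \{\tau_j\}$ lies in $\mathcal{L}_{\mu_j}$, automatically yielding $\tau \in \mathcal{C}_{\mu_j}$. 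For $\lambda \in \mathbb{T} \setminus \{\mu_1, \dots, \mu_d\}$, I bring in auxiliary one-dimensional slices of the form $\sigma(t) = (t^{k_1}\tau_1, \dots, t^{k_d}\tau_d)$: the composition $\phi \circ \sigma$ is always a finite Blaschke product on $\mathbb{D}$, and some choice of weights $\vec{k}$ must produce a nonconstant product (else $\phi$ itself would be globally constant). Introducing a perturbation $\sigma_{\vec{c}}(t) = (c_1 t^{k_1}\tau_1, \dots, c_d t^{k_d}\tau_d)$ with $\vec{c} \in \mathbb{T}^d$ near $\vec{1}$ gives a $d$-parameter family of Blaschke products whose endpoints $\sigma_{\vec{c}}(1)$ sweep out a neighborhood of $\tau$ in $\mathbb{T}^d$; combining this with the Knese nontangential limit theorem \cite[Corollary 14.6]{Kne15} should allow one to choose $\vec{c}$ so that preimages of every $\lambda$ in the corresponding Blaschke product land arbitrarily close to $\tau$, closing the remaining case.
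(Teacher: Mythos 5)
Your reduction to showing $S := \mathcal{Z}_p \cap \mathbb{T}^d \subseteq \mathcal{C}_\lambda$ is correct and matches the paper's, but the argument you give for that inclusion has genuine gaps. The paper's own proof is completely different: it is a contradiction argument, conjugating $\phi$ to a Pick function $f$ on $\Pi^d$ via M\"obius maps sending $\lambda\mapsto\infty$, showing that $f$ is continuous on a ``wedge'' of full measure in a real neighborhood of $0$, and invoking the wedge-of-the-edge theorem of \cite{Pas19} to force analyticity of $f$ at $0$, contradicting $\tau\in S$. There is no induction on dimension and no slicing.

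The first problem with your induction is the case analysis. You dichotomize the reduced axial slices $\psi_j$ into ``singular at $\hat{\tau}$'' and ``unimodular constant,'' but there is a third possibility: $\psi_j$ nonconstant and \emph{regular} at $\hat{\tau}$, taking a unimodular value $\mu_j\in\mathbb{T}$ there. In that case the slice only shows $\tau\in\mathcal{C}_{\mu_j}$; for $\lambda\neq\mu_j$ the level set of $\psi_j$ stays bounded away from $\hat{\tau}$, so the face $z_j=\tau_j$ produces no $\lambda$-points near $\tau$ and the inductive step gives nothing. You would need to prove, not just assume, that this case cannot occur for every $j$ simultaneously. The second, more serious problem is the final step. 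For a fixed weight vector $\vec{k}$, the Blaschke product $b_{\vec{c}}(t)=\phi(\sigma_{\vec{c}}(t))$ has bounded degree and depends continuously on $\vec{c}$. Its $\lambda$-preimages on $\mathbb{T}$ therefore converge, as $\vec{c}\to\vec{1}$, to the $\lambda$-preimages of $b_{\vec{1}}$, which lie at positive distance from $t=1$ because $b_{\vec{1}}(1)=\phi^*(\tau)=\mu\neq\lambda$. Consequently $\sigma_{\vec{c}}(t(\vec{c}))$ converges to $\sigma_{\vec{1}}(t(\vec{1}))\neq\tau$, and this construction never produces $\lambda$-points of $\phi$ approaching $\tau$. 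The reference to Knese's nontangential limit theorem does not repair this: nontangential convergence to $\mu$ at $\tau$ is precisely what pushes $\lambda$-preimages away from $t=1$, not toward it. As written, the remaining case is not closed, and the overall argument does not establish $S\subseteq\mathcal{C}_\lambda$.
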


\begin{proof} To prove equality, we need only show that $\T^d \cap \mathcal{Z}_p \subseteq \mathcal{C}_{\lambda}.$
By way of contradiction, fix $\tau \in \mathcal{Z}_p \cap \mathbb{T}^d$ and assume that there is some $\lambda \in \mathbb{T}$ 
so $\phi$ omits $\lambda$ in a neighborhood $\mathcal{U} \subseteq \mathbb{T}^d$ of $\tau.$ 
Let $\Pi$ denote the upper half plane and let $\alpha_1, \dots, \alpha_d$ and $\beta$ be 
conformal maps satisfying $\alpha_i : \Pi \rightarrow \mathbb{D}$ with 
$\alpha_i(0)=\tau_i$ and $\beta: \mathbb{D} \rightarrow \Pi$ with $\beta(\lambda)=\infty.$
Define $f: \Pi^d \rightarrow \Pi$ by $ f = \beta \circ \phi \circ (\alpha_1, \dots, \alpha_d)$. Let $\widehat{S} = \alpha^{-1} (\mathcal{Z}_p \cap \mathbb{T}^d)$. Then since $\dim(\mathcal{Z}_p \cap \mathbb{T}^d) \le d-2$, $\dim \widehat{S} \le d-2.$

Then in $\mathbb{R}^d$ near $0$, $f$ can only be singular on $\widehat{S}$ and $\alpha^{-1}(\mathcal{C}_{\lambda}).$ Since $\phi$ omits $\lambda$ near $\tau$, 
there is a  neighborhood $\mathcal{V} \subseteq \mathbb{R}^d$ of $0$ where $f$ is continuous on
 $\mathcal{V} \setminus \widehat{S}.$  We claim that the small size of $\widehat{S}$ will allow us to force $f$ to be analytic at $0$, which will yield the contradiction.
  To that end, choose a sequence $(x_n)\rightarrow 0$ so that
 \begin{itemize}
 \item[i.] $f$ is  continuous at each $x_n$; 
 \item[ii.] There is a fixed $\epsilon>0$ so that $B_{\epsilon}(x_n) \subseteq \mathcal{V}$ for all $n$. 
 \end{itemize}
 For each $n$, define the function $f_n(z):  = f(z+x_n)$. Then each $f_n: \Pi^d \rightarrow \Pi$ can be extended to be analytic in $\Pi^d \cup -\Pi^d$, is continuous in some neighborhood $B_n$ of $0$, and in $B_{\epsilon}(0)$, is only discontinuous at points $x$ where $x + x_n \in \widehat{S}$. We need to create a star-like wedge that omits those points.
 
Specifically, let $\ell_x$ denote the portion of the line between $0$ and $x$ that lies in $\overline{B_{\epsilon}(0)}$ and let $P(x)$ denote the two points in the intersection of that line with the boundary: $P(x): = \ell_x \cap \partial B_{\epsilon}(0)$. Define
\[ W_n = \left\{ x \in B_{\epsilon}(0): (\ell_x +x_n) \cap \widehat{S} =\emptyset \right\},\]
where $\ell_x +x_n$ denotes the shifted line segment.
We claim that $\mu(B_{\epsilon}(0) \setminus W_n)=0.$  To see this, observe that 
\[ 
\begin{aligned}
B_{\epsilon}(0) \setminus W_n &= \{ x \in B_{\epsilon}(0): (\ell_x +x_n) \cap \widehat{S} \ne \emptyset\} \\
& = \left\{ tx \in \mathbb{R}^d: x \in P\big( \big( \widehat{S} \cap B_{\epsilon}(x_n) \big) - x_n\big) \ \text{ and } -1 < t <1\right\}. 
 \end{aligned}
 \]
 Spherical coordinates and the fact that $\dim \widehat{S}  \le d -2$ implies that $\mu(B_{\epsilon}(0) \setminus W_n ) =0$ and so, 
\[ W: = \cap W_n \]
 has positive measure. Then $W$ is \emph{real wedge} in the sense of \cite{Pas19}. For each $n$, $f_n$ is continuous on $\Pi^d \cup W \cup B_n \cup -W\cup -\Pi^d$ and analytic on $\Pi^d \cup -\Pi^d$.  Then by Theorem 2.1 in \cite{Pas19} there is an open set $D \subseteq \mathbb{C}^d$ only depending on $W$ so that $f_n$ analytically continues to $D$. Letting $x_n$ get sufficiently close to $0$, this implies that $f$ analytically continues to $0$, a contradiction.

Thus $\mathbb{T}^d \cap \mathcal{Z}_{p} \subseteq \mathcal{C}_{\lambda}$, and the result follows. 
\end{proof}

For complicated RIFs, the conclusions of Theorem \ref{thm:CL} are not at all obvious. 

\begin{example} Consider degree $(2,1,1)$ RIF defined as follows:

\begin{equation}
\phi(z)=\frac{\tilde{p}(z)}{p(z)}=\frac{1-z_1-z_1z_2+z_1^2z_2-2z_3-z_1z_3-z_1^2z_3+z_2z_3-z_1z_2z_3+4z_1^2z_2z_3}{4-z_1+z_1^2-z_2-z_1z_2-2z_1^2z_2+z_3-z_1z_3-z_1z_2z_3+z_1^2z_2z_3}.\label{ex:badcurveRIF1}
 \end{equation}
This RIF is constructed and studied in detail in Example \ref{ex:curve2}. For now, it serves as a nice illustration of Theorem \ref{thm:CL}.
In particular, if we think about $\mathcal{Z}_{p}\cap \T^3$ as living in $[-\pi, \pi]^3$, we can represent it as
\[\mathcal{Z}_{p}\cap \T^3=\{(0,0, u)\colon u \in [-\pi, \pi]\}\cup \{(s, \arg m(e^{is}) ,\pi)\colon s \in [-\pi, \pi]\},\]
where $m(z)=\tfrac{3+z^2}{1+3z^2}$
is unimodular on $\mathbb{T}$. Thus, Theorem \ref{thm:CL} implies that every $\mathcal{C}_{\lambda}$ 
contains both curves of $\mathcal{Z}_{p}\cap \T^3$. The containment of the line $(0,0,u)$ actually forces all generic unimodular level sets $\mathcal{C}_{\lambda}$
to have a singularity at $(1,1)$ and so, unlike the unimodular level curves for two variable RIFs, these $\mathcal{C}_{\lambda}$ need not be smooth. This is illustrated in Figure \ref{fig:badcurveplots} below. 

\begin{figure}[h!]
 \includegraphics[width=0.5 \textwidth]{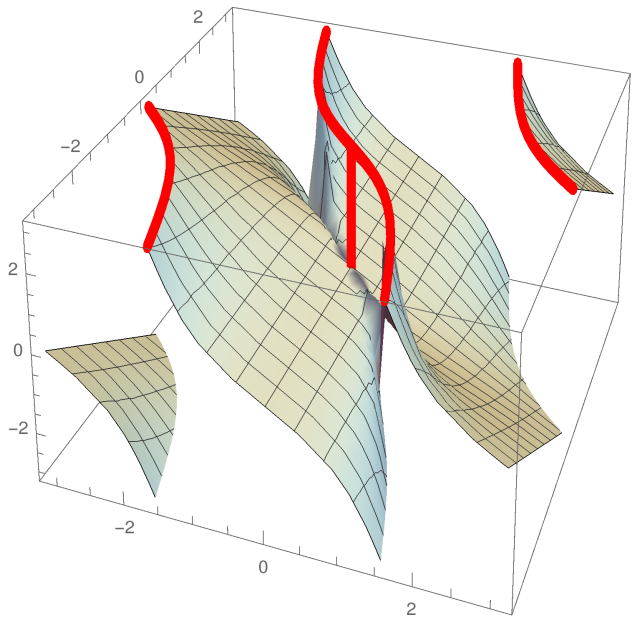}
  \caption{\textsl{The zero set for $\phi$ in \eqref{ex:badcurveRIF1} on $\mathbb{T}^3$ with a generic unimodular level set $\mathcal{C}_{\lambda}$}.}
  \label{fig:badcurveplots}
\end{figure}

\end{example}

\section{$3$-variable RIFs: Integrability} \label{sec:3var1}

To see both obstructions to a general theory and specific analytic results, we now restrict to three-variable irreducible RIFs $\phi$ with degree $(m,n,1)$. Such functions have better properties than general three-variable RIFs, see for instance \cite{Kne11PAMS},
 and thus provide a more tractable but still rich setting for investigating integrability, zero set, and unimodular level set questions. n the $(m,n,1)$ setting, we can write $\phi = \frac{\tilde{p}}{p}$ where 
\begin{equation} \label{eqn:mn1}  p(z) = p_1(z_1, z_2) + z_3 p_2(z_1, z_2), \ \ \  \tilde{p}(z) = z_3 \tilde{p}_1(z_1,z_2) + \tilde{p}_2(z_1,z_2), \end{equation}
and $p_1$, $p_2$ and $\tilde{p}_1, \tilde{p}_2$ respectively share no common factors. Here, the $\tilde{}$ notation means that the reflection operation is always taken with respect to $(m,n,1)$ or $(m,n)$. 

\subsection{Integrability Setup} When $\tilde{p}_1 \ne 0$, one can parameterize $\mathcal{Z}_{\tilde{p}}$ as $z_3 = \psi^0(z_1,z_2) := -\frac{\tilde{p}_2}{\tilde{p}_1}(z_1,z_2)$. Since $p$ is nonvanishing on $\mathbb{D}^3$, we have $|p_1| \ge |p_2|$ on $\mathbb{D}^2.$ On $\mathbb{T}^2$, this translates to 
\begin{equation} \label{eqn:p}  |\tilde{p}_1| = |p_1 | \ge |p_2| = |\tilde{p}_2|.\end{equation}
Thus, if $\tilde{p}_1$ vanishes at some $(\zeta_1, \zeta_2) \in \mathbb{T}^2$, then $\tilde{p}_2$ vanishes at $(\zeta_1, \zeta_2)$ as well and so, the vertical line $\{ (\zeta_1, \zeta_2) \} \times \mathbb{T}$ must be in $\mathbb{Z}_{\tilde{p}} \cap \mathbb{T}^3.$  Then, the set $\mathcal{Z}_{\tilde{p}_1} \cap \mathbb{T}^2$ must have measure $0$ and 
 the arguments in (and immediately proceeding) the proof of Theorem \ref{thm:GenInt} imply that
\begin{equation} \label{eqn:psi} \int_{\mathbb{T}^3} \Big | \tfrac{\partial \phi}{\partial z_3} (\zeta ) \Big|^\p |d\zeta | 
\approx \int_{\mathbb{T}^2} \left( 1 - |\psi^0(\zeta_1, \zeta_2) | ^2\right)^{1-\p} |d\zeta_1| |d\zeta_2| 
= \iint_{[-\pi,\pi]^2} \left | \rho_{\phi}(\theta_1, \theta_2) \right |^{1-\p} d\theta_1 d\theta_2, \end{equation}
where $\rho_{\phi}( \theta_1, \theta_2):=  1 - |\psi^0(e^{i\theta_1}, e^{i\theta_2})|^2$. This reduces $\tfrac{\partial \phi}{\partial z_3}$ 
to a more tractable problem in the case when $\rho_{\phi}$ is smooth. In particular, this puts us in the general setting of \cite{G06} and similar works. 
In the remainder of this section, we will (1) reduce to the setting where the $\rho_{\phi}$ are smooth, 
(2) provide the definitions and results from \cite{G06}, and (3) prove results about the power series expansions of the 
$\rho_{\phi}$'s in this setting and combine them with (2) to yield integration results about 
$\tfrac{\partial \phi}{\partial z_3}.$ There are several limitations to these methods, which are illustrated via the following examples:

\begin{itemize}
\item[i.] RIF $\phi$ with vertical lines $\{(\zeta_1, \zeta_2)\} \times \mathbb{T}$ in $\mathcal{Z}_p$ cannot be easily studied using this method. Sometimes this is an artifact of the proof, but other times, their associated $\rho_{\phi}$'s are actually not smooth. See Examples \ref{ex:vl1} and \ref{ex:vl2}.
\item[ii.] Some $ \tfrac{\partial \phi}{\partial z_3}$ have subtle integrability properties that cannot be captured using results from \cite{G06}. See Examples \ref{ex:lifted} and \ref{ex:curveiso}.
\end{itemize}

\subsection{Vertical Lines in $\mathcal{Z}_p$}

To force $\rho_{\phi}$ to be smooth, we will generally make the following assumption: $\mathcal{Z}_p \cap \mathbb{T}^3$ contains no vertical lines $\{(\zeta_1, \zeta_2)\} \times \mathbb{T}$.  This assumption guarantees that $\tilde{p}_1$ is nonvanishing on $\mathbb{T}^3$, which in turn implies that $\rho_{\phi}$ is smooth. Furthermore, the following is immediate.

\begin{lemma} \label{lem:analytic} Let $\zeta \in \mathbb{T}^3 \cap \mathcal{Z}_{\tilde{p}}$. If the vertical line $\{(\zeta_1, \zeta_2)\} \times \mathbb{T} \not \in \mathcal{Z}_{\tilde{p}}$, then there is a neighborhood 
 $U\subseteq \mathbb{C}^3$ of $\zeta$ such that 
\[\mathcal{Z}_{\tilde{p}}\cap U=\{(z_1,z_2, \psi^0(z_1,z_2)) :(z_1,z_2) \in V\},\] 
for some open $V\subseteq \C^2$ and $\psi^0:=-\frac{\tilde{p}_2}{\tilde{p}_1}$ analytic  near $(\zeta_1, \zeta_2)$. 
\end{lemma}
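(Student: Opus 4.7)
The plan is to exploit the explicit linear structure of $\tilde{p}$ in the variable $z_3$. Since $\phi$ has degree $(m,n,1)$, we have $\tilde{p}(z) = z_3 \tilde{p}_1(z_1,z_2) + \tilde{p}_2(z_1,z_2)$ as in \eqref{eqn:mn1}. For any fixed $(\zeta_1,\zeta_2) \in \mathbb{T}^2$, the slice $z_3 \mapsto \tilde{p}(\zeta_1,\zeta_2,z_3)$ is an affine function of $z_3$, and the vertical line $\{(\zeta_1,\zeta_2)\}\times\mathbb{T}$ lies in $\mathcal{Z}_{\tilde{p}}$ precisely when this affine function is identically zero, i.e.\ when both $\tilde{p}_1(\zeta_1,\zeta_2)=0$ and $\tilde{p}_2(\zeta_1,\zeta_2)=0$.

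The first step is to show $\tilde{p}_1(\zeta_1,\zeta_2) \ne 0$. By hypothesis, the vertical line through $(\zeta_1,\zeta_2)$ is not contained in $\mathcal{Z}_{\tilde{p}}$, so at least one of $\tilde{p}_1(\zeta_1,\zeta_2)$, $\tilde{p}_2(\zeta_1,\zeta_2)$ is nonzero. Invoking the bound \eqref{eqn:p}, namely $|\tilde{p}_1| \ge |\tilde{p}_2|$ on $\mathbb{T}^2$, we immediately get $\tilde{p}_1(\zeta_1,\zeta_2) \ne 0$ in either case. By continuity of the polynomial $\tilde{p}_1$, there is an open neighborhood $V \subseteq \mathbb{C}^2$ of $(\zeta_1,\zeta_2)$ on which $\tilde{p}_1$ does not vanish; hence $\psi^0 = -\tilde{p}_2/\tilde{p}_1$ is holomorphic on $V$.

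Next, since $\zeta \in \mathcal{Z}_{\tilde{p}}$ and $\tilde{p}_1(\zeta_1,\zeta_2)\ne 0$, solving the linear equation $\zeta_3 \tilde{p}_1(\zeta_1,\zeta_2) + \tilde{p}_2(\zeta_1,\zeta_2)=0$ gives $\zeta_3 = \psi^0(\zeta_1,\zeta_2)$. Choose $U = V \times W \subseteq \mathbb{C}^3$, where $W$ is an open disk around $\zeta_3$ chosen small enough (and $V$ further shrunk if necessary) so that $\psi^0(V) \subseteq W$, which is possible by continuity of $\psi^0$ at $(\zeta_1,\zeta_2)$. On $U$, the equation $\tilde{p}(z)=0$ is equivalent to $z_3 = -\tilde{p}_2(z_1,z_2)/\tilde{p}_1(z_1,z_2) = \psi^0(z_1,z_2)$, yielding the claimed graph representation $\mathcal{Z}_{\tilde{p}} \cap U = \{(z_1,z_2,\psi^0(z_1,z_2)) : (z_1,z_2) \in V\}$.

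This is essentially a bookkeeping argument; the only substantive input beyond algebra is the inequality \eqref{eqn:p}, which in turn comes from the fact that $p$ is stable on $\mathbb{D}^3$ and $|\tilde{p}|=|p|$ on $\mathbb{T}^3$. I would not expect any genuine obstacle — the result is really a clean consequence of the fact that $\tilde{p}$ has degree one in $z_3$.
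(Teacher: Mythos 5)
Your proof is correct and fills in exactly the "immediate" argument the paper gestures at: the hypothesis plus the inequality $|\tilde{p}_1| \ge |\tilde{p}_2|$ on $\mathbb{T}^2$ from \eqref{eqn:p} gives $\tilde{p}_1(\zeta_1,\zeta_2) \neq 0$, and then the linearity of $\tilde{p}$ in $z_3$ yields the graph representation. The shrinking of $V$ so that $\psi^0(V)\subseteq W$ is the small but necessary bookkeeping step to get the stated equality of sets, and you handle it correctly.
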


However, while the assumption that $\mathcal{Z}_p \cap \mathbb{T}^3$ contains no vertical lines is sufficient to conclude that $\rho_{\phi}$ is smooth, it is not always necessary.

\begin{example} \label{ex:vl1} Consider the RIF $\phi = \frac{\tilde{p}}{p}$,  where 
\[ p(z) = (2-z_1-z_2) + z_3 \tfrac{1}{2}(2z_1 z_2 - z_1 -z_2).\]
Then $\mathcal{Z}_p \cap \mathbb{T}^3$ contains the vertical line $\{(1,1)\} \times \mathbb{T}$. However, $| \psi^0(\zeta_1, \zeta_2)| = \tfrac{1}{2}$ for $(\zeta_1, \zeta_2) \in \mathbb{T}^2 \setminus \{(1, 1)\}$. From this, we can conclude that $\rho_{\phi} = \tfrac{3}{4}$ a.e. and
\[ \int_{\mathbb{T}^3} \Big | \tfrac{\partial \phi}{\partial z_3} (\zeta ) \Big|^\p |d\zeta |  \approx \int_{\mathbb{T}^2} \left( 1 - |\psi^0(\zeta_1, \zeta_2) |^2 \right)^{1-\p} |d\zeta_1| |d\zeta_2| =  \int_{\mathbb{T}^2} \left( \tfrac{1}{4} \right)^{1-\p} |d\zeta_1| |d\zeta_2| < \infty,\]
for all $\p$ satisfying $1 \le \p < \infty.$  Furthermore let 
\[ \Phi(z) = \frac{2z_1z_2-z_1-z_2}{2-z_1-z_2}, \quad \text{ so } \quad \phi(z) = \frac{ z_3 \Phi(z) + \frac{1}{2}}{1 + \frac{1}{2}z_3 \Phi(z)}\]
on $\mathbb{D}^3$. From this, one can easily deduce that $| \frac{\partial \phi}{\partial z_1}(\zeta_1, \zeta_2, \zeta_3) | \approx | \frac{\partial \Phi}{\partial z_1}(\zeta_1, \zeta_2)|$ on  $\T^3 \setminus \left(\{(1,1)\} \times \T \right)$. In \cite{BPS18}, the authors showed that $\frac{\partial \Phi}{\partial z_1} \in L^\p(\mathbb{T}^2)$ if and only if $\p < \frac{3}{2}$. Thus, we can immediately conclude that $\frac{\partial \phi}{\partial z_1} \in L^\p(\mathbb{T}^3)$ if and only if $\p < \frac{3}{2}.$ The same conclusion holds for $ \frac{\partial \phi}{\partial z_2}$. This demonstrates that, unlike in the two-variable setting, the partial derivatives of singular, irreducible $3$-variable RIFs need not have the same critical integrability indices.
\end{example}

Now let us consider an example of a RIF $\phi$ for which the function $\rho_{\phi}$ is actually
discontinuous on $[-\pi, \pi]^2.$

\begin{example} \label{ex:vl2} To construct $\phi$, we first define the function $f(w) = \langle (A -w_1Y_1 - w_2Y_2-w_3Y_3)^{-1} v, v\rangle,$ where 
\[A=\left(\begin{array}{ccc}0 & 1 &0\\1 & 0 & 1\\0 & 1 &0\end{array}\right), Y_1=\left(\begin{array}{ccc}1 & 0 & 0\\0 & 0 & 0\\0 & 0 &0 \end{array}\right),  Y_2=\left(\begin{array}{ccc}0 & 0 & 0\\0 & 1& 0\\0 & 0 &\frac{1}{2} \end{array}\right), v =\left( \begin{array}{c} 1 \\ 0 \\ 0  \end{array}\right) \]
and $Y_3=\mathbb{I}-Y_1-Y_2,$ where $\mathbb{I}$ is the $3\times 3$ identity matrix. 
By the representation theory developed in \cite{ATDY16}, $f$ is a Pick function, that is, $f$ is analytic in the poly-upper half-plane $\Pi^3=\{(w_1,w_2,w_3)\in \mathbb{C}^3\colon \mathrm{Im} \, w_j>0\}$ and maps $\Pi^3$ into $\Pi$. 
After conjugating $f$ with the M\"obius transformation 
\begin{equation}
m(z)=i\frac{1-z}{1+z}
\label{moebmap}
\end{equation}
mapping the disk to the upper half-plane, we obtain the RIF
\[\phi(z)=\frac{-1+z_1-2z_1z_2-z_2^2+3z_1z_2^2+z_3+z_1z_3-2z_2z_3-4z_1z_2z_3-z_2^2z_3+5z_1z_2^2z_3}{-5+z_1+4z_2+2z_1z_2-z_2^2-z_1z_2^2-3z_3+z_1z_3+2z_2z_3-z_2^2z_3+z_1z_2^2z_3}.\]
Direct substitution reveals that $p$ vanishes on $\{(1,1)\} \times \mathbb{T}.$  
Then setting $\tilde{p}=0$ and solving for $z_3$ yields:
\[z_3=\psi^0(z_1,z_2)=\frac{1-z_1+2z_1z_2+z_2^2-3z_1z_2^2}{1+z_1-2z_2-4z_1z_2-z_2^2+5z_1z_2^2}.\]
Both $\psi^0$ and $| \psi^0 |$ are discontinuous at $(1,1) \in \mathbb{T}^2$.  This can be seen as follows. By an elementary limiting argument 
\[ \lim_{\mathbb{T}\ni \zeta_1\to 1} \psi^{0}(\zeta_1,\zeta_1)=  \lim_{\mathbb{T}\ni \zeta_1\to 1} \frac{1-\zeta_1+3\zeta_1^2-3\zeta_1^3}{1-\zeta_1-5\zeta_1^2+5\zeta_1^3} = -1.\]
On the other hand, the set $\{(\zeta_1,\zeta_2)\in \mathbb{T}^2\colon \psi^0(\zeta_1,\zeta_2)=-\frac{1}{3}\}$ can be parametrized by
\[\zeta_1=\gamma(\zeta_2)=\frac{2-\zeta_2+\zeta_2^2}{1-\zeta_2+2\zeta_2^2}.\]
As $\gamma(1)=1$, this implies
\[\lim_{\zeta_2\to 1}\psi^0(\gamma(\zeta_2), \zeta_2)=-\frac{1}{3},\]
and so $\psi^0$ and $| \psi^0|$, and hence $\rho_{\phi}$, are not continuous at $(1,1)$. This illustrates why, to guarantee the smoothness of $\rho_{\phi}$, we will restrict to RIFs without vertical lines in their zero sets.
\end{example}

It is worth pointing out the following more general fact concerning vertical lines.

\begin{lemma} \label{lem:Flines} For every $\phi =\frac{\tilde{p}}{p}$ as in \eqref{eqn:mn1}, there are at most finitely many $(\zeta_1, \zeta_2) \in \mathbb{T}^2$ so that $\mathcal{Z}_p \cap \mathbb{T}^3$ contains the vertical line $\{(\zeta_1, \zeta_2)\} \times \mathbb{T}.$
\end{lemma}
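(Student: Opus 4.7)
The plan is to reduce the statement to a routine algebraic fact about coprime polynomials in two variables, using the special structure of $p$ coming from \eqref{eqn:mn1}.

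First I would translate the geometric condition into an algebraic one. By \eqref{eqn:mn1}, $p(\zeta_1,\zeta_2,z_3) = p_1(\zeta_1,\zeta_2) + z_3\, p_2(\zeta_1,\zeta_2)$ is a polynomial of degree at most one in $z_3$. The vertical line $\{(\zeta_1,\zeta_2)\}\times \mathbb{T}$ is contained in $\mathcal{Z}_p$ if and only if $p(\zeta_1,\zeta_2,z_3)$ vanishes for all $z_3\in\mathbb{T}$, which (by analyticity in $z_3$, or because an affine polynomial with infinitely many zeros must be zero) is equivalent to the simultaneous vanishing $p_1(\zeta_1,\zeta_2) = 0$ and $p_2(\zeta_1,\zeta_2) = 0$. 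Thus the set of $(\zeta_1,\zeta_2)\in\mathbb{T}^2$ producing such vertical lines is contained in $V := \mathcal{Z}_{p_1} \cap \mathcal{Z}_{p_2} \subseteq \mathbb{C}^2$.

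Next I would invoke the coprimality hypothesis recorded just after \eqref{eqn:mn1}: $p_1$ and $p_2$ share no common polynomial factor in the unique factorization domain $\mathbb{C}[z_1,z_2]$. A standard fact from elementary algebraic geometry (for example, the resultant $\mathrm{Res}_{z_1}(p_1, p_2)\in\mathbb{C}[z_2]$ is a nonzero polynomial, so only finitely many $\zeta_2$ can produce a common $z_1$-root, and for each such $\zeta_2$ there are only finitely many corresponding $\zeta_1$; equivalently, this is B\'ezout's theorem for plane affine curves) gives that $V$ is a finite subset of $\mathbb{C}^2$. Intersecting with $\mathbb{T}^2$ preserves finiteness, so there are only finitely many $(\zeta_1,\zeta_2)\in \mathbb{T}^2$ for which the vertical line lies in $\mathcal{Z}_p \cap \mathbb{T}^3$.

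The main obstacle is essentially none: the only care needed is to correctly extract the coprimality of $p_1,p_2$ from \eqref{eqn:mn1} and to observe that the affine-in-$z_3$ structure forces both coefficient polynomials to vanish at $(\zeta_1,\zeta_2)$. No analytic input (Hurwitz, boundary regularity, etc.) is required, which is why the same conclusion fails in general three-variable settings where $p$ has higher degree in $z_3$ and one can no longer decouple the vanishing into two polynomial conditions in $(z_1,z_2)$.
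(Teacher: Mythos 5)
Your proof is correct and follows essentially the same route as the paper: reduce the geometric condition to the simultaneous vanishing $p_1(\zeta_1,\zeta_2)=p_2(\zeta_1,\zeta_2)=0$, then invoke the coprimality of $p_1,p_2$ (which the paper attributes to the irreducibility of $\phi$, and you correctly note is recorded as part of the setup in \eqref{eqn:mn1}) to conclude there are finitely many common zeros. The only difference is that you spell out the elementary algebra (resultant/B\'ezout) behind the finiteness of the common zero locus, whereas the paper states it as known.
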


\begin{proof} Observe that $\mathcal{Z}_p$ contains the vertical line $\{ (\zeta_1, \zeta_2)\} \times \mathbb{T}$ if and only if $p_1(\zeta_1,\zeta_2) = 0=p_2(\zeta_1, \zeta_2).$ Since $\phi$ is irreducible, $p_1$ and $p_2$ share no common factors. Thus, this can happen for at most finitely many $(\zeta_1, \zeta_2).$ 
\end{proof}

\subsection{Newton Polygons and Integrability} \label{subsec:GT}
In what follows, we require some well known definitions and Greenblatt's integrability results from \cite{G06}. For our purposes, it suffices to assume that $f$ vanishes at $(0,0)$, is real-analytic in an $\mathbb{R}^2$ neighborhood of $(0,0)$, is not identically $0$ near $(0,0)$, and has the Taylor series expansion
\[ f(x,y) = \sum_{k, \ell=0}^{\infty}  a_{k, \ell} x^k y^\ell \]
near $(0,0).$
For each pair $(k, \ell),$ let $Q_{k,\ell} = \{ (s,t) \in \mathbb{R}^2: k \le s, \ell \le t\}.$  Then one can compute the following:
\begin{definition} The \emph{Newton polygon} of $f$, denoted $N(f)$, is the convex hull of all of the $Q_{k, \ell}$ for which $a_{k,\ell} \ne 0.$ Typically,  $N(f)$ has boundary consisting of an infinite horizontal ray, an infinite vertical ray, and a finite number of negatively-sloped line segments \cite{G06}. The \emph{Newton distance} of $f$, denoted $\Delta(f)$, is the smallest $\delta>0$ for which $(\delta, \delta) \in N(f)$. Geometrically, it is the intersection of the line $y=x$ with boundary of $N(f).$  

If $y=x$ intersects the boundary of $N(f)$ in the interior of a finite line segment, say with slope $-\tfrac{1}{\tilde{m}}$, define
\[ e_f = \inf_{a_{k,\ell} \ne 0} (k + \ell \tilde{m}),   \quad g_f(c) = \sum_{k +\ell \tilde{m}=e_f} a_{k, \ell} c^{\ell}, \quad  \bar{g}_f(c) = \sum_{k +\ell \tilde{m}=e_f} a_{k, \ell} (-1)^k c^{\ell},\]
and let $Z(f)$ denote the maximum order of a zero of $g_f$ or $\bar{g}_f$.
\end{definition}

Before proceeding, let us illustrate these objects with a brief example.

\begin{example} \label{Ex:GT1} Assume $f$ is analytic in a neighborhood of $(0,0)$ with Taylor series expansion
\begin{equation} \label{eqn:NP}  f(x,y) = a_{2,0} x^2 + a_{1,1} xy + a_{0,2}y^2 + \text{ higher order terms},\end{equation}
and $a_{2,0}, a_{0,2} \ne 0.$ 
\begin{figure}[h!]
 \includegraphics[width=0.34 \textwidth]{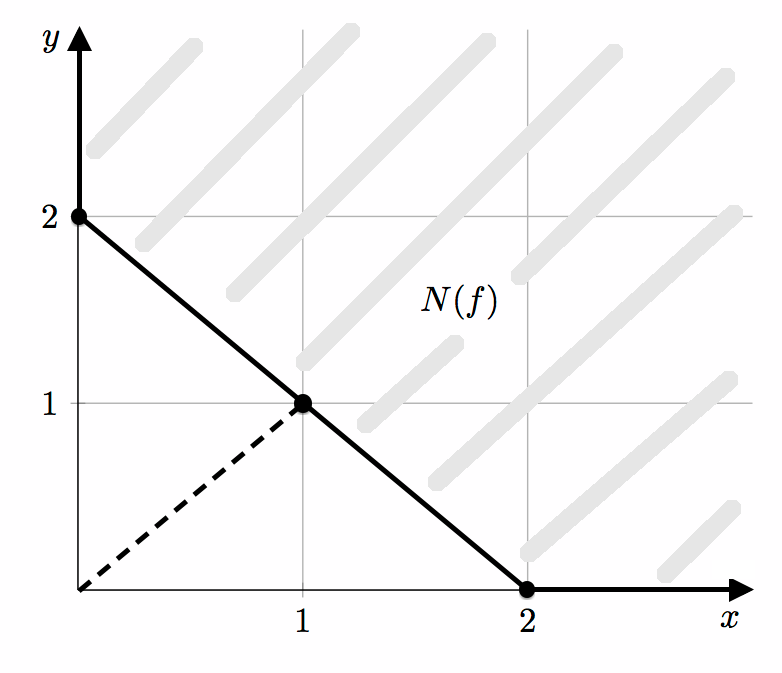}
  \caption{\textsl{The Newton polygon for $f$ in \eqref{eqn:NP}}.}
  \label{fig:NP}
\end{figure}
From Figure \ref{fig:NP}, one can see that  $y=x$ intersects the boundary piece $y = 2-x$ at the point $(1,1)$. Thus, $\Delta(f) =1$, $\tilde{m} = 1$, and $e_f = 2.$ This gives us the polynomials
\[ g_f(c) =  a_{2,0} + a_{1,1} c + a_{0,2}c^2 \ \ \text{ and } \ \  \bar{g}_f(c) =  a_{2,0} - a_{1,1} c + a_{0,2}c^2. \]
\end{example}

In \cite{G06}, Greenblatt characterized when the Newton distance $\Delta(f)$ can used to obtain the integrability index of $f$. Here is a summary of the main results, simplified to our current situation.

\begin{theorem}[Greenblatt, \cite{G06}] \label{thm:Green} Let $f$ be analytic near $(0,0)$ in $\R^2$, satisfy $f(0,0)=0$, and have a nonzero Taylor series expansion near $(0,0).$ Let $U$ be a sufficiently small neighborhood of $(0,0)$, which may depend on $f$, and define
\[ I^U_{\epsilon} = \int_U |f(x,y)|^{-\epsilon} dx \ dy.\]
Then the following hold:
\begin{itemize}
\item[a.]  If $y=x$ intersects the boundary of $N(f)$ in the interior of a finite line segment and $Z(f) \le \Delta(f)$, then  $ I^U_{\epsilon}  < \infty$ if and only if $\epsilon < \frac{1}{\Delta(f)}.$
\item[b.]   If $y=x$ intersects the boundary of $N(f)$ in the interior of a finite line segment and $Z(f) > \Delta(f)$, then there is an $\epsilon <\frac{1}{\Delta(f)}$ with $I^U_{\epsilon} = \infty$.
\item[c.] If $y=x$ intersects the boundary of $N(f)$ at a vertex, then  $ I^U_{\epsilon}  < \infty$ if and only if $\epsilon < \frac{1}{\Delta(f)}.$
\end{itemize} 

\end{theorem}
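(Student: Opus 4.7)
The plan is to analyze $I_\epsilon^U$ via a Newton--polygon adapted decomposition of a small neighborhood $U$ of the origin, reducing on each piece to a product of a one-dimensional power integral and an integral of an edge polynomial. First I would split $U$ into finitely many curvilinear sectors, one for each bounded edge of $N(f)$ and one abutting each coordinate axis. If an edge of $N(f)$ has slope $-1/\tilde m$, its sector is roughly $\{(x,y)\colon |y|\asymp|x|^{\tilde m}\}$, and on it I would apply the substitution $(x,y)\mapsto(x, c\,x^{\tilde m})$ with $c$ running over a compact set; the four sign combinations of $(x,y)$ produce both $g_f$ and its twin $\bar g_f$. Using the definition of $e_f$, the Taylor expansion gives
\[ f(x, c x^{\tilde m}) \;=\; x^{e_f}\bigl(g_f(c) + x^{\alpha} r(x,c)\bigr) \]
for some $\alpha>0$ and smooth $r$, and the Jacobian contributes $x^{\tilde m}$, so the sector's contribution to $I_\epsilon^U$ becomes $\iint x^{\tilde m-\epsilon e_f}\,\bigl|g_f(c)+x^\alpha r(x,c)\bigr|^{-\epsilon}\,dc\,dx$.

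The $x$-integral converges near $0$ precisely when $\epsilon e_f<\tilde m+1$, i.e.\ $\epsilon<(\tilde m+1)/e_f$. An elementary geometric check confirms that $(\tilde m+1)/e_f=1/\Delta(f)$ exactly when the edge of slope $-1/\tilde m$ is the one struck by the diagonal $y=x$, and that every other edge yields a strictly larger threshold; so only the diagonal edge is binding. The $c$-integral is locally bounded away from the zeros of $g_f$ and $\bar g_f$, and near a zero of order $k$ it behaves like $|c-c_0|^{-\epsilon k}$, integrable iff $\epsilon k<1$. Consequently, for $\epsilon<\min(1/\Delta(f),\,1/Z(f))$ the perturbative term $x^\alpha r$ can be absorbed by uniform comparison and $I_\epsilon^U$ is finite. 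When $Z(f)\le\Delta(f)$ this minimum equals $1/\Delta(f)$, yielding the sufficiency in (a); case (c) is handled analogously, except that at a vertex $(k_0,\ell_0)$ with $k_0=\ell_0=\Delta(f)$ no edge polynomial intervenes and the dominating monomial $x^{k_0}y^{\ell_0}$ directly controls $f$ on each sector.

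For the necessity direction in (a) and (c) I would produce matching lower bounds by restricting integration to a tube along the diagonal edge on which $|g_f(c)|\asymp 1$ (possible since $g_f\not\equiv 0$); there the $x$-integral diverges for every $\epsilon\ge1/\Delta(f)$, forcing $I_\epsilon^U=\infty$. For part (b), where $g_f$ or $\bar g_f$ has a zero of order $Z(f)>\Delta(f)$, localize instead to a thin tube $|c-c_0|\le\eta$ around the offending zero, where $|g_f(c)|\lesssim|c-c_0|^{Z(f)}$; the sector contribution is then bounded below by a constant multiple of $\int x^{\tilde m-\epsilon e_f}\,dx\cdot\int|c-c_0|^{-\epsilon Z(f)}\,dc$, which diverges already for some $\epsilon$ strictly less than $1/\Delta(f)$.

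The main obstacle is controlling the perturbative remainder $x^\alpha r(x,c)$ against the leading factor $g_f(c)$ inside $|g_f+x^\alpha r|^{-\epsilon}$: a naive triangle inequality fails at the zeros of $g_f$. The standard fix is to partition each sector into a \emph{generic} region $\{|g_f(c)|\ge Cx^\alpha\}$, where pointwise comparison $|g_f+x^\alpha r|\asymp|g_f|$ applies, and a \emph{near-zero} region $\{|g_f(c)|<Cx^\alpha\}$, where an implicit function argument rewrites the zero of $g_f+x^\alpha r$ as a small perturbation of the zero of $g_f$. Carrying out this bookkeeping uniformly across all sectors, and across all zeros of $g_f$ and $\bar g_f$, is the technical heart of the proof; it is precisely the hypothesis $Z(f)\le\Delta(f)$ that prevents the near-zero region from overwhelming the diagonal $x$-integral in (a), and its failure that drives the extra divergence in (b).
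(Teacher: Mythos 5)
Note first that the paper does not prove this result: it is stated, attributed to Greenblatt, and cited from \cite{G06} --- the surrounding text explicitly calls it ``a summary of the main results'' of that reference. So there is no in-paper proof for your sketch to be compared against; what can be assessed is whether your outline is a sound reconstruction of Greenblatt's Newton-polygon argument. In broad strokes it is: the sector decomposition by edges of $N(f)$, the substitution $y=cx^{\tilde m}$ with Jacobian $x^{\tilde m}$, the normal form $f(x,cx^{\tilde m})=x^{e_f}(g_f(c)+x^{\alpha}r(x,c))$, the identity $(\tilde m+1)/e_f = 1/\Delta(f)$ for the diagonal edge, and the generic/near-zero partition are all the standard ingredients, and your treatment of the sufficiency in (a) and (c) and of the matching lower bound on the generic region is fine.

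There is, however, a concrete gap in part (b). The asserted lower bound by the separated product $\int x^{\tilde m-\epsilon e_f}\,dx\cdot\int|c-c_0|^{-\epsilon Z(f)}\,dc$ is not valid: for any fixed $x>0$, the perturbation $x^{\alpha}r(x,c)$ caps the singularity at $c=c_0$, so pointwise one only has $|g_f(c)+x^\alpha r|^{-\epsilon}\gtrsim\bigl(|c-c_0|^{Z(f)}+x^\alpha\bigr)^{-\epsilon}$, which near $c_0$ is of size $x^{-\epsilon\alpha}$ rather than blowing up. In fact the decoupled product would predict divergence already at $\epsilon=1/Z(f)$, whereas the genuine threshold coming from the coupled region $\{x^{\alpha}\lesssim|c-c_0|^{Z(f)}\}$ is $\epsilon_0=\dfrac{\alpha+Z(f)(\tilde m+1)}{Z(f)(\alpha+e_f)}$, which is strictly larger than $1/Z(f)$ and strictly smaller than $1/\Delta(f)$ precisely when $Z(f)>\Delta(f)$ --- the latter inequality being all that is needed for (b). So your final paragraph correctly identifies the near-zero region as the technical crux, but the displayed lower bound in (b) should be replaced by the coupled estimate; as stated it would prove a false (stronger) threshold. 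You would also, for a full proof, need to handle the possibility that the near-zero analysis must be iterated (repeated blow-ups) when $r(0,c_0)$ also vanishes; Greenblatt's argument addresses this, but your sketch assumes a single blow-up suffices.
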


\begin{example} \label{Ex:GT2} Return to the setup in Example \ref{Ex:GT1} with
\[  f(x,y) = a_{2,0} x^2 + a_{1,1} xy + a_{0,2}y^2 + \text{ higher order terms}.\]
Then Theorem \ref{thm:Green} implies that 
\begin{itemize}
\item[a.] If $Z(f) = 1$, i.e. if $a_{1,1} \ne \pm 2 \sqrt{a_{2,0}a_{0,2}}$, then  $ I^U_{\epsilon}  < \infty$ if and only if $\epsilon < 1.$
\item[b.]  If $Z(f) = 2$, i.e. if $a_{1,1} = \pm 2 \sqrt{a_{2,0}a_{0,2}}$, then there is some $\epsilon <1$ with $ I^U_{\epsilon}  = \infty.$
\end{itemize}
In the RIF setting, we will see examples of both such cases.
\end{example}

\begin{remark}\label{rem:GT2}  There are also more complicated versions of Theorem \ref{thm:Green} for the situation where  $y=x$ intersects the boundary of $N(f)$ on an infinite line segment. As we have not yet observed this case in the setting of RIFs, we have omitted it here and refer the interested reader to \cite{G06}.
 However, it is worth noting that, amongst all of the cases, the best possible integrability index is $\epsilon = 1/\Delta(f)$. Thus, if an $f$ satisfies the conditions of Theorem \ref{thm:Green}, one will always have $I^{\epsilon}_U =\infty$ if $\epsilon > 1/\Delta(f)$. \end{remark}

\subsection{ $(m,n,1)$-Integration Results} \label{subsec:int}

As before, assume that $\phi$ is an irreducible degree $(m,n,1)$ RIF. Then \eqref{eqn:psi} indicates that studying the integrability of $\frac{\partial \phi}{\partial z_3}$ is equivalent to studying the integrability of $\rho_{\phi}$, where
\[\rho_{\phi}(\theta_1,\theta_2)= 1-|\psi^0(e^{i\theta_1}, e^{i\theta_2})|^2\]
and $\psi^0 = -\frac{\tilde{p}_2}{\tilde{p}_1}.$
To make the problem nontrivial, assume that $\phi$ has a singularity at some $\zeta \in \mathbb{T}^3$ and by changing variables, assume $\zeta = (1,1,1).$ To guarantee that $\rho_{\phi}$ is  real-analytic, assume that the vertical line $\{(1, 1)\}  \times \mathbb{T}$ is not in $\mathcal{Z}_{p}$. Then one can deduce a number of properties about the Taylor series expansion of $\rho_{\phi}$ near $(0,0).$

\begin{lemma}\label{lemma:Qformlemma}
Let 
\[\rho(\theta_1, \theta_2):=\rho_{\phi}(\theta_1,\theta_2)=\sum_{k=1}^{\infty}\sum_{\ell=1}^{\infty}c_{k,\ell}\theta_1^k\theta_2^\ell\]
be the Taylor series expansion of $\rho_{\phi}$ centered at the origin. Then:
\begin{enumerate}
\item We have $c_{0,0}=c_{1,0}=c_{0,1}=0$.
\item The quadratic form
\[Q(\theta_1,\theta_2)=Q_{\phi}(\theta_1,\theta_2)=c_{2,0}\theta_1^2+c_{1,1}\theta_1\theta_2+c_{0,2}\theta_2^2\] 
is positive semi-definite.

\item If $Q$ is strictly positive definite, then $(1,1,1)$ is an isolated singularity of $\phi$ on $\T^3.$
\item If $Q$ is identically $0$, then the order $3$ terms all vanish, namely:
\[ c_{3,0} = c_{2,1}=c_{1,2} = c_{0,3}=0.\] 
\end{enumerate}

\end{lemma}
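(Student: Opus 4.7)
The plan is to exploit the fact that $\rho = 1 - |\psi^0|^2$ attains the value $0$ as a local minimum at the origin, and to then read off (1)–(4) by standard extremum arguments on the real-analytic function $\rho$. First I would verify that $\rho(0,0)=0$. The singularity hypothesis gives $p(1,1,1)=p_1(1,1)+p_2(1,1)=0$; since the vertical line $\{(1,1)\}\times\mathbb{T}$ lies outside $\mathcal{Z}_p$, at least one of $p_1(1,1),\,p_2(1,1)$ is nonzero, and therefore both are nonzero with equal modulus. Using $|\tilde{p}_j|=|p_j|$ on $\mathbb{T}^2$, this yields $|\psi^0(1,1)|=1$, i.e.\ $\rho(0,0)=0$. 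Combining \eqref{eqn:p} with the nonvanishing of $\tilde{p}_1$ near $(1,1)$ gives $|\psi^0|\le 1$ and hence $\rho\ge 0$ on a neighborhood of $(0,0)$. Thus the origin is a local minimum of the real-analytic function $\rho$, which forces $c_{1,0}=c_{0,1}=0$ (completing (1)) and forces the Hessian of $\rho$ to be positive semidefinite, giving (2) since $Q$ is one-half of that Hessian form.

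For (3), if $Q$ is positive definite then the origin is a strict local minimum of $\rho$ and so $|\psi^0|<1$ on some punctured neighborhood of $(1,1)$ in $\mathbb{T}^2$. Any singular point $(\zeta_1,\zeta_2,\zeta_3)\in\mathcal{Z}_p\cap\mathbb{T}^3$ satisfies $p_1(\zeta_1,\zeta_2)+\zeta_3 p_2(\zeta_1,\zeta_2)=0$ with $|\zeta_3|=1$, which forces $|p_1(\zeta_1,\zeta_2)|=|p_2(\zeta_1,\zeta_2)|$ and hence $|\psi^0(\zeta_1,\zeta_2)|=1$. Therefore no singular point has its first two coordinates in the punctured neighborhood, while at $(\zeta_1,\zeta_2)=(1,1)$ the equation $p_1(1,1)+\zeta_3 p_2(1,1)=0$ uniquely determines $\zeta_3=1$. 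This shows $(1,1,1)$ is isolated in $\mathcal{Z}_p\cap\mathbb{T}^3$.

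For (4), assume $Q\equiv 0$ and write
\[
\rho(\theta_1,\theta_2)=C(\theta_1,\theta_2)+O(\|\theta\|^4),
\]
where $C$ is the homogeneous degree-three part. For any direction $v=(v_1,v_2)\in\mathbb{R}^2$ and small real $t$, one has $\rho(tv_1,tv_2)=t^3 C(v)+O(t^4)\ge 0$; letting $t\to 0^+$ gives $C(v)\ge 0$, and letting $t\to 0^-$ gives $-C(v)\ge 0$, so $C(v)=0$ for every direction, i.e.\ $C\equiv 0$ and every cubic coefficient vanishes. The main subtlety, and essentially the only nontrivial use of the RIF structure, is the combination of \eqref{eqn:p} and the no-vertical-line hypothesis to simultaneously ensure $|\psi^0|\le 1$ on a neighborhood of $(1,1)$ and $|\psi^0(1,1)|=1$; once this is in hand, the four conclusions reduce to elementary extremum calculus together with the observation that an odd homogeneous form that stays nonnegative on every line through the origin must vanish identically.
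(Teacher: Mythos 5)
Your proof is correct and follows essentially the same approach as the paper: both establish that $\rho\ge 0$ near $(0,0)$ with $\rho(0,0)=0$ and then deduce (1)–(4) from the fact that the origin is a local minimum, with (4) coming from the observation that a nonnegative function with zero quadratic part has vanishing cubic part. The only cosmetic difference is that in (4) you parametrize by directions $v$ and let $t\to 0^{\pm}$, whereas the paper parametrizes by slopes $\theta_2=a\theta_1$ and observes that a cubic in $a$ vanishing for all $a\in\mathbb{R}$ must have zero coefficients; these are the same argument in slightly different clothing.
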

\begin{proof}
To prove (1), observe that  $c_{0,0}=0$ follows directly from the assumption that $(1,1,1)\in \mathbb{T}^3$ is a singularity of $\phi$. Now write
\[ \rho(\theta_1,\theta_2)=c_{1,0}\theta_1+c_{0,1}\theta_2+R(\theta_1,\theta_2)\]
where $R(\theta_1,\theta_2)=\mathcal{O}(\|\theta\|^2)$ as $\theta \to (0,0)$.  Note that by \eqref{eqn:p}, $|\tilde{p}_2| \le |\tilde{p}_1|$ on $\T^2$ and so $\rho(\theta_1, \theta_2) \ge0$. If $c_{1,0}> 0$, say, we would have $c_{1,0}\theta_1<0$ for $\theta_1<0$ and $c_{1,0}\theta_1>0$ for $\theta_1>0$, which in turn would force a sign change in $\rho(\theta_1,0)$ for sufficiently small $\theta_1$. A similar argument shows that we cannot have $c_{1,0}<0$, meaning that $c_{1,0}=0$. Analogous reasoning involving $\rho(0,\theta_2)$ shows that $c_{0,1}=0$.

To establish ($2$), observe that
\[\rho(\theta_1,\theta_2)= Q(\theta_1,\theta_2)+ \mathcal{O}(\|\theta\|^3),\]
for $(\theta_1, \theta_2)$ near $(0,0)$. Thus, $Q(\theta_1,\theta_2)\geq 0$ near $(0,0)$, since $\rho(\theta_1,\theta_2)$ would otherwise attain negative values. Hence $Q$ is positive semi-definite. Conclusion ($3$) follows because  if $Q(\theta_1,\theta_2)$ is strictly positive definite, then $(0,0,0)$ is a strict local minimum of $\rho$. This in turn implies that $(1,1,1)$ is an isolated singularity of $\phi$ on $\T^3.$ 

To prove ($4$), note that for any $a\in \mathbb{R}$,
we must have
\[c_{3,0}\theta_1^3+c_{2,1}\theta_1^2(a\theta_1)+c_{1,2}\theta_1(a\theta_1)^2+c_{0,3}(a\theta_2)^3=\left(c_{3,0}+c_{2,1}a+c_{1,2}a^2+c_{0,3}a^3\right)\theta_1^3=0\]
for $\theta_1\in [-\pi,\pi]$.
Otherwise, the cubic form $c_{3,0}\theta_1^3+c_{2,1}\theta_1^2\theta_2+c_{1,2}\theta_1\theta_2^2+c_{0,3}\theta_2^3$ would have a sign chance at the origin
along the line $\{(\theta_1,a\theta_1)\} \subset [-\pi, \pi]^2$. This is turn would again imply that $\rho(\theta_1, \theta_2) <0$ near $(0,0)$, which cannot happen. Thus
\[
c_{3,0}+c_{2,1}a+c_{1,2}a^2+c_{0,3}a^3=0,
\]
which happens if $c_{3,0}=c_{2,1}=c_{1,2}=c_{0,3}=0$, or if $a\in\R$ is a root of the above cubic equation. But since the latter is possible for at most three distinct values of $a\in \mathbb{R}$, we deduce that the coefficients are equal to zero, as needed.
\end{proof}

We can combine Lemma \ref{lemma:Qformlemma} with Greenblatt's Theorem \ref{thm:Green} to conclude a number of results about the integrability of certain $\frac{\partial \phi}{\partial z_3}.$  Let us first point out how Theorem \ref{thm:Green} applies to the situation at hand.

\begin{remark} \label{rem:Green} Let $\phi = \frac{\tilde{p}}{p}$ be an irreducible RIF with $\deg \phi = (m,n,1)$ and assume $\mathcal{Z}_{\tilde{p}}\cap \T^3$ does not contain any vertical lines $\{(\zeta_1, \zeta_2)\} \times \mathbb{T}$. Further assume that $\phi$ has a singularity at $(1,1,1) \in \mathbb{T}^3$, let $\rho_{\phi}$ be as in  Lemma \ref{lemma:Qformlemma}, and let $U\subseteq \mathbb{R}^2$ be a sufficiently small neighborhood of $(0,0)$. Then 
there is a small neighborhood $V \subseteq \mathbb{T}^2$ of $(1,1)$ where an application of Lemma \ref{lem:fbp} implies that
\[  \iint_V \int_{\mathbb{T}} \Big | \tfrac{\partial \phi}{\partial z_3} (\zeta ) \Big|^\p |d\zeta | \approx \iint_{V} \left( 1-| \psi^0(\zeta_1, \zeta_2) |^2 \right )^{1 -\p} |d \zeta_1 | |d\zeta_2| = \iint_U \left | \rho_{\phi}(\theta_1, \theta_2)\right|^{-\epsilon} d\theta_1 d \theta_2,\]
for $\p>1$ and $\epsilon= \p-1 >0.$ As $\phi$ is singular at $(1,1,1)$, $\rho_{\phi}(0,0)=0.$ Moreover, $\rho_{\phi}$ is real analytic near $(0,0)$. If $\rho_{\phi}$ vanished identically near $(0,0)$, then there would be some open $\tilde{V} \subseteq \mathbb{T}^2$ with 
\[ \left \{ \left(\zeta_1, \zeta_2, \psi^0(\zeta_1, \zeta_2) \right): (\zeta_1, \zeta_2) \in \tilde{V} \right \} \subseteq \mathbb{T}^3 \cap \mathcal{Z}_{p}.\]
However, this is impossible because $\dim \left( \mathbb{T}^3 \cap \mathcal{Z}_p \right) \le 1.$ Thus, $\rho_{\phi}$ does not vanish identically and so, shrinking $U$ and $V$ if necessary, we can apply Theorem \ref{thm:Green} to obtain conditions on when  $\rho_{\phi}^{-1} \in L^{\epsilon}(U)$, or equivalently, when $\frac{\partial \phi}{\partial z_3} \in L^\p(V \times \mathbb{T})$ for $\p= 1+ \epsilon.$
\end{remark}

Here is a sampling of the results one can obtain by combining Lemma \ref{lemma:Qformlemma} and Theorem \ref{thm:Green}. To apply this result, one should first change variables to move the singularity of interest to $(1,1,1)$. 

\begin{theorem} \label{thm:GT2} Let $\phi = \frac{\tilde{p}}{p}$ be an irreducible RIF with $\deg \phi = (m,n,1)$ and assume $\mathcal{Z}_{\tilde{p}} \cap \T^3$ does not contain any vertical lines $\{(\zeta_1, \zeta_2)\} \times \mathbb{T}$. Further assume that $\phi$ has a singularity at $(1,1,1)$  and let $Q$ be as in Lemma \ref{lemma:Qformlemma}. Then
\begin{itemize}
\item[a.] If $c_{1,1} \ne \pm 2\sqrt{c_{0,2}c_{2,0}}$, then there is a neighborhood $V \subseteq \mathbb{T}^2$ of $(1, 1)$ such that $ \frac{\partial \phi}{\partial z_3 } \in L^\p(V \times \mathbb{T})$ if and only if $\p <2.$
\item[b.] If $c_{1,1} = \pm 2\sqrt{c_{0,2}c_{2,0}}$, then there is a neighborhood $V \subseteq \mathbb{T}^2$ of $(1, 1)$ and $\p <2$ such that $ \frac{\partial \phi}{\partial z_3 } \not \in L^\p(V\times \mathbb{T}).$
\end{itemize}
\end{theorem}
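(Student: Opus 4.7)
The plan is to translate the question about $\frac{\partial\phi}{\partial z_3}\in L^\p(V\times\mathbb{T})$ into one about $\rho_\phi^{-\epsilon}\in L^1(U)$ with $\epsilon=\p-1$ via Remark \ref{rem:Green}, and then read off the Newton polygon data of $\rho_\phi$ directly from the coefficient information in Lemma \ref{lemma:Qformlemma}. That lemma guarantees $c_{0,0}=c_{1,0}=c_{0,1}=0$ and that the quadratic form $Q=c_{2,0}\theta_1^2+c_{1,1}\theta_1\theta_2+c_{0,2}\theta_2^2$ is positive semi-definite; in particular $c_{2,0},c_{0,2}\geq 0$ and $c_{1,1}^2\leq 4c_{2,0}c_{0,2}$, which will determine the initial face of $N(\rho_\phi)$ and the shape of $g_{\rho_\phi}$ and $\bar g_{\rho_\phi}$.

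For part (a), the strict inequality $c_{1,1}\neq\pm 2\sqrt{c_{2,0}c_{0,2}}$ combined with semi-definiteness forces $Q$ to be strictly positive definite, and in particular $c_{2,0},c_{0,2}>0$ (if either vanished, semi-definiteness would force $c_{1,1}=0$, contradicting the strict inequality since $\pm 2\sqrt{0\cdot c_{0,2}}=0$). Then $N(\rho_\phi)$ has the segment between $(2,0)$ and $(0,2)$ on its boundary, the diagonal $y=x$ meets its interior at $(1,1)$, so $\Delta(\rho_\phi)=1$, $\tilde m=1$, $e_{\rho_\phi}=2$, and $g_{\rho_\phi}(c)=c_{2,0}+c_{1,1}c+c_{0,2}c^2$. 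Its discriminant $c_{1,1}^2-4c_{2,0}c_{0,2}$ is strictly negative, so $g_{\rho_\phi}$ and $\bar g_{\rho_\phi}$ have only simple (complex) roots, giving $Z(\rho_\phi)=1\leq\Delta(\rho_\phi)$. Theorem \ref{thm:Green}(a) then yields $\rho_\phi^{-\epsilon}\in L^1(U)$ iff $\epsilon<1$, equivalently $\p<2$.

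For part (b) I split into two subcases. If $c_{2,0},c_{0,2}>0$ with $c_{1,1}=\pm 2\sqrt{c_{2,0}c_{0,2}}$, the Newton polygon is the same as in (a), so $\Delta(\rho_\phi)=1$, but now the discriminant vanishes, so $g_{\rho_\phi}$ or $\bar g_{\rho_\phi}$ has a double root, forcing $Z(\rho_\phi)=2>\Delta(\rho_\phi)$; Theorem \ref{thm:Green}(b) then supplies some $\epsilon<1$, i.e.\ some $\p<2$, at which $\rho_\phi^{-\epsilon}\notin L^1(U)$. If instead $c_{2,0}=0$ or $c_{0,2}=0$, semi-definiteness again forces $c_{1,1}=0$, and combined with the vanishing of $c_{0,0},c_{1,0},c_{0,1}$ this means no nonzero coefficient of $\rho_\phi$ lies in $\{(k,\ell):k\leq 1,\,\ell\leq 1\}$, so $\Delta(\rho_\phi)>1$. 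Since $\rho_\phi\not\equiv 0$ (the dimension argument from Remark \ref{rem:Green}), Remark \ref{rem:GT2} produces some $\epsilon\in(1/\Delta(\rho_\phi),1)$ with $\rho_\phi^{-\epsilon}\notin L^1(U)$, again giving some $\p<2$ at which integrability fails.

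The main obstacle is this last subcase, where one cannot read off the precise geometry of $N(\rho_\phi)$ from the low-order Taylor data alone and hence cannot invoke Theorem \ref{thm:Green} directly; instead one must fall back on the cruder universal upper bound from Remark \ref{rem:GT2}. What makes the argument go through is precisely that the positive semi-definiteness of $Q$ in Lemma \ref{lemma:Qformlemma} prevents any degree-two survivors through $(1,1)$ and thereby pushes the Newton polygon strictly above the diagonal.
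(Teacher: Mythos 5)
Your proof follows the same strategy as the paper's and is, in substance, correct: both arguments translate the $L^\p$ question into $L^1$-integrability of $\rho_\phi^{-\epsilon}$ via Remark~\ref{rem:Green}, read the Newton-polygon data of $\rho_\phi$ from the quadratic form in Lemma~\ref{lemma:Qformlemma}, and then invoke Greenblatt's Theorem~\ref{thm:Green} (plus Remark~\ref{rem:GT2} in the degenerate subcase). For part (a) you spell out the computation of $\Delta$, $\tilde m$, $e$, $g$, $\bar g$, and $Z$, whereas the paper simply points to Example~\ref{Ex:GT2}; for part (b) you split into the same two subcases as the paper.

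One small imprecision in your final subcase of (b): from ``no nonzero coefficient of $\rho_\phi$ lies in $\{(k,\ell):k\le 1,\,\ell\le 1\}$'' you conclude ``so $\Delta(\rho_\phi)>1$.'' That implication is false in general --- if $c_{2,0}$ and $c_{0,2}$ were both nonzero, the Newton polygon would contain the segment from $(2,0)$ to $(0,2)$ and one would have $\Delta=1$ despite the vanishing on $[0,1]^2$. What actually forces $\Delta>1$ is the \emph{additional} fact, which you do have as the premise of the subcase, that $c_{2,0}=0$ or $c_{0,2}=0$: then every $(k,\ell)$ with $k+\ell=2$ and $c_{k,\ell}\ne 0$ is a single point (either $(0,2)$ or $(2,0)$, not both), so the convex hull of the quadrants cannot reach $(1,1)$. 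Your wording attributes the conclusion to the weaker observation; the paper's proof at least flags that $c_{2,0}=0$ is the input to its ``straightforward convexity argument.'' Relatedly, the closing summary sentence about semi-definiteness ``pushing the Newton polygon strictly above the diagonal'' only describes the degenerate subcase; in the main case of (b) (discriminant zero with $c_{2,0},c_{0,2}>0$) the polygon still passes through $(1,1)$ and the failure of integrability comes from $Z>\Delta$, not from $\Delta>1$.
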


\begin{proof} By Lemma  \ref{lemma:Qformlemma}, the Taylor series expansion of $\rho_{\phi}$ centered at $(0,0)$ has $c_{0,0}, c_{0,1}, c_{1,0} =0$ and $Q$ positive semi-definite. 

Now consider (a). This additional restriction on the coefficients implies that $Q$ is strictly positive definite. Thus, both $c_{0,2}, c_{2,0}>0$ and so, $\rho_{\phi}$ is in the setting of Example \ref{Ex:GT2}. In particular, its Newton distance $\Delta(\rho_{\phi}) = 1$ and there is some neighborhood $U \subseteq \mathbb{R}^2$ of $(0,0)$ such that 
\[  \iint_U \left | \rho_{\phi}(\theta_1, \theta_2)\right|^{-\epsilon} d\theta_1 d \theta_2 < \infty \quad \text{if and only if} \quad \epsilon <1.\]
Then the discussion in Remark \ref{rem:Green} implies that there is a neighborhood $V \subseteq \mathbb{T}^2$ of $(1,1)$ such that $ \frac{\partial \phi}{\partial z_3} \in L^\p(V \times \mathbb{T})$ if and only if $\p = 1+ \epsilon <2.$

Now consider (b). We have two separate cases. First assume that $c_{0,2}, c_{2,0} \ne 0$. Then we are again in the setting of Example \ref{Ex:GT2}. The coefficient conditions imply that $\Delta(\rho_{\phi})=1$ and $Z(\rho_{\phi}) =2.$ Thus, there is some small $U \subseteq \mathbb{R}^2$ containing $(0,0)$ and $\epsilon <1$ so that 
\begin{equation} \label{eqn:W}  \iint_U \left | \rho_{\phi}(\theta_1, \theta_2)\right|^{-\epsilon} d\theta_1 d \theta_2 = \infty.\end{equation}
Thus, Remark \ref{rem:Green} again implies that there is some neighborhood $V\subseteq \T^2$ of $(1,1)$  so that $ \frac{\partial \phi}{\partial z_3} \not \in L^\p(V \times \mathbb{T})$ for $\p=1 +\epsilon<2$. Now assume without loss of generality that $c_{2,0} =0$. Then $c_{1,1}=0$ as well and a straightforward convexity argument shows that $(1,1)$ is not in the Newton polygon $N(\rho_{\phi})$. Thus $y=x$ must intersect $N(\rho_{\phi})$  beyond $(1,1),$ and the Newton distance $\Delta(\rho_{\phi}) >1$. Then the discussion in Remark \ref{rem:GT2} implies that there is some $\epsilon <1$ so that \eqref{eqn:W} holds, which again gives a neighborhood $V$ and $\p<2$ so that $ \frac{\partial \phi}{\partial z_3} \not \in L^\p(V \times \mathbb{T})$.
\end{proof}

The following is an immediate corollary of Theorem \ref{thm:GT2} and Lemma \ref{lemma:Qformlemma}. 

\begin{corollary} \label{cor:int} Let $\phi = \frac{\tilde{p}}{p}$ be a singular irreducible RIF with $\deg \phi = (m,n,1)$ such that $\mathcal{Z}_{\tilde{p}} \cap \mathbb{T}^3$ does not contain any vertical lines $\{(\zeta_1, \zeta_2)\} \times \mathbb{T}$. Then 
\begin{itemize}
\item[i.] If $\dim\left( \mathcal{Z}_p \cap \mathbb{T}^3\right)=0$, then $\frac{\partial \phi}{\partial z_3} \not \in L^\p(\mathbb{T}^3)$ for $\p \ge 2.$
\item[ii.] If $\dim\left(\mathcal{Z}_p \cap \mathbb{T}^3\right)=1$, then $\frac{\partial \phi}{\partial z_3} \not \in L^\p(\mathbb{T}^3)$ for $\p \ge \alpha,$ for some $\alpha <2.$
\end{itemize} \end{corollary}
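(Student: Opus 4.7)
The plan is to reduce each case to a local failure of integrability at a single singularity, leveraging two basic ingredients. First, $\mathbb{T}^3$ has finite Lebesgue measure, so $L^q(\mathbb{T}^3)\subseteq L^\p(\mathbb{T}^3)$ whenever $q\geq \p$; consequently, as soon as $\frac{\partial \phi}{\partial z_3}\notin L^{\p_0}(V\times \mathbb{T})$ for some open $V\subseteq \mathbb{T}^2$ and some $\p_0\geq 1$, we automatically get $\frac{\partial \phi}{\partial z_3}\notin L^\p(\mathbb{T}^3)$ for every $\p\geq \p_0$. Second, to place any chosen singularity at $(1,1,1)$ and apply Theorem \ref{thm:GT2}, I will use the rotation $z_j\mapsto \lambda_j z_j$ with $\lambda_j\in\mathbb{T}$; this automorphism of $\mathbb{D}^3$ preserves irreducibility, the $(m,n,1)$-degree, the absence of vertical lines in $\mathcal{Z}_{\tilde{p}}\cap\mathbb{T}^3$, and the $L^\p$-norm of $\frac{\partial \phi}{\partial z_3}$, so no generality is lost.

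For (i), because $\phi$ is singular I pick any $\tau\in \mathcal{Z}_p\cap\mathbb{T}^3$ and rotate so that $\tau=(1,1,1)$. Theorem \ref{thm:GT2} then gives a neighborhood $V\subseteq \mathbb{T}^2$ of $(1,1)$ in which integrability fails at some $\p_0\leq 2$: case (a) yields the stronger ``$\frac{\partial \phi}{\partial z_3}\notin L^\p(V\times \mathbb{T})$ for every $\p\geq 2$'' directly, while case (b) gives it for some specific $\p<2$, which by the nesting of $L^q$ spaces already delivers $\p_0=2$. Either way the finite-measure observation upgrades this to $\frac{\partial \phi}{\partial z_3}\notin L^\p(\mathbb{T}^3)$ for every $\p\geq 2$.

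For (ii), the hypothesis $\dim(\mathcal{Z}_p\cap\mathbb{T}^3)=1$ produces a $1$-dimensional real-algebraic component $C\subseteq \mathcal{Z}_p\cap\mathbb{T}^3$. I pick any $\tau\in C$ (the vertical-line hypothesis guarantees $\tau$ sits on no line in $\mathcal{Z}_{\tilde{p}}$) and rotate so $\tau=(1,1,1)$. Since $\tau$ lies on a $1$-dimensional variety of singular points, every neighborhood of $\tau$ contains other points of $\mathcal{Z}_p\cap\mathbb{T}^3$, so $\tau$ is \emph{not} an isolated singularity. The contrapositive of Lemma \ref{lemma:Qformlemma}(3) then rules out strict positive-definiteness of $Q_\phi$; combined with part (2), $Q_\phi$ is degenerate positive semi-definite, so $\det Q_\phi=0$, i.e.\ $c_{1,1}^2=4c_{2,0}c_{0,2}$. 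This is precisely the hypothesis of case (b) of Theorem \ref{thm:GT2}, which produces some $\alpha<2$ and a neighborhood $V\subseteq \mathbb{T}^2$ of $(1,1)$ with $\frac{\partial \phi}{\partial z_3}\notin L^\alpha(V\times \mathbb{T})$. The finite-measure nesting then upgrades this to $\frac{\partial \phi}{\partial z_3}\notin L^\p(\mathbb{T}^3)$ for every $\p\geq \alpha$, as claimed.

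The heavy lifting has already been done in Lemma \ref{lemma:Qformlemma} and Theorem \ref{thm:GT2}; the only step requiring care is the non-isolation assertion in (ii), which is immediate from the local topology of real algebraic curves of positive dimension. I do not anticipate any further obstacle.
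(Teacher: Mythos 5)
Your proof is correct and follows the same route as the paper's: both arguments move a chosen singularity to $(1,1,1)$ by a torus rotation, invoke Theorem \ref{thm:GT2} (using Lemma \ref{lemma:Qformlemma}(3) in the contrapositive for part (ii) to land in case (b)), and pass from a local failure of integrability on $V\times\mathbb{T}$ to a global one on $\mathbb{T}^3$ via the nesting of $L^\p$ spaces on a finite measure space. You are somewhat more explicit than the paper about why the rotation preserves the standing hypotheses and about the $\det Q_\phi=0$ reformulation of the non-strict-definiteness condition, but these are just fuller justifications of the same steps.
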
 \begin{proof} For (i), observe that after changing variables to move the singularity to $(1,1,1)$,  $\phi$ must satisfy either $(a)$ or $(b)$ of Theorem \ref{thm:GT2}. Both imply that $\frac{\partial \phi}{\partial z_3} \not \in L^\p(\mathbb{T}^3)$ for $\p \ge 2.$ If $\dim\left(\mathcal{Z}_p \cap \mathbb{T}^3\right)=1$, then some singularity of $\phi$ is not isolated in $\T^3$. After changing variables, assume that this singularity is $(1,1,1).$ Then Lemma  \ref{lemma:Qformlemma}(3) implies that $Q$ cannot be strictly positive definite. This implies that $Q$ satisfies the condition in Theorem \ref{thm:GT2}(b), and so $\frac{\partial \phi}{\partial z_3} \not \in L^\p(\mathbb{T}^3)$ for $\p \ge \alpha,$ for some $\alpha <2.$
\end{proof}

\begin{remark} Most $z_3$-partial derivatives of singular degree $(m,n,1)$ RIFs should fail to be in $L^\p(\mathbb{T}^3)$ for $\p >2.$ However, Example \ref{ex:vl1} shows that there are singular RIFs whose zero sets contain vertical lines with better integrability. So, if one relaxed the condition that $\mathcal{Z}_{\tilde{p}}$ contain no vertical lines, then additional conditions on $\phi$ would be required. Moreover, we conjecture that the converse of Corollary \ref{cor:int}(i) is not true. In Example \ref{ex:curveiso2}, we provide a degree $(2,2,2)$ RIF with only isolated singularities whose $z_3$-partial derivative is in $ L^\p(\mathbb{T}^3)$ if and only if $\p <\frac{3}{2}.$ However, we have not found an irreducible degree $(m,n,1)$ RIF with both isolated singularities and worse derivative integrability than $\p <2.$
\end{remark} 

Let us consider the following example. Its derivative integrability was derived in a different way in Example \ref{ex:can}, but it also fits into this context.

\begin{example} \label{ex:iso1} Define the singular degree $(1,1,1)$ RIF $\phi$ by 
\begin{equation}
\phi(z)=\frac{3z_1z_2z_3-z_1z_2-z_1z_3-z_2z_3}{3-z_1-z_2-z_3}.
\label{ex:3dfave}
\end{equation}
As $\phi$ has a single isolated singularity at $(1,1,1) \in \mathbb{T}^3$, it suffices to study integrability near that point. 
Solving $\tilde{p}(z)=0$ for $z_3$ gives
\[z_3= \psi^0(z_1, z_2) = \frac{z_1z_2}{3z_1z_2-z_1-z_2}.\]
Then $\rho_{\phi}$ satisfies
\[ \begin{aligned} \rho_{\phi}(\theta_1, \theta_2) = 1 - \left | \psi^0\left( e^{i \theta_1}, e^{i\theta_2} \right ) \right|^2   
&= 1 - \frac{1}{11-6\cos \theta_1-6\cos\theta_2+2\cos(\theta_1-\theta_2)} \\
&= 2\theta_1^ 2+2\theta_1\theta_2+2\theta_2^2+\mathcal{O}(\|\theta\|^3),
\end{aligned}
\]
for  $(\theta_1,\theta_2)$ sufficiently close to $(0,0).$
Theorem \ref{thm:GT2} immediately implies that $\frac{\partial \phi}{\partial z_3} \in L^\p(\mathbb{T}^3)$ if and only if $\p <2.$

\end{example}

As demonstrated in Examples \ref{ex:vl1} and \ref{ex:vl2}, there are singular degree $(m,n,1)$ RIFs whose zero sets contain vertical lines of the form $\{(\zeta_1, \zeta_2) \} \times \mathbb{T}$. In the case of Example \ref{ex:vl2}, $\rho_{\phi}$ is discontinuous and so, the typical analysis assuming smoothness or real-analyticity \cite{PSS99, G06} does not apply. 

Similarly, there are RIFs where the analysis in Theorem \ref{thm:Green} does not provide the optimal integrability index. 

 \begin{example}\label{ex:lifted}Consider the RIF
\[\phi(z)=\frac{2z_1z_2z_3-z_1-z_2}{2-z_1z_3-z_2z_3}.\]
For this choice of $\phi$ we have
\[z_3=\psi^0(z_1,z_2)=\frac{1}{2z_1z_2}(z_1+z_2)\]
and then 
\[\rho_{\phi}(\theta_1,\theta_2)=1-\tfrac{1}{4}|1+e^{i(\theta_1-\theta_2)}|^2= \tfrac{1}{2} \cos(\theta_1-\theta_2) =\tfrac{1}{4}(\theta_1-\theta_2)^2+\mathcal{O}(\| \theta^4 \|) \]
has Newton distance $\Delta(\rho_{\phi})=1$. However, since this function is again obtained from the two-variable example $\Phi=(2z_1z_2-z_1-z_2)/(2-z_1-z_2)$, we have $\frac{\partial \phi}{\partial z_3}\in L^\p(\mathbb{T}^3)$ if and only if $\p<\frac{3}{2}$ by the results of \cite{BPS18} or by direct 
computation using the series expansion for $\cos(\theta_1-\theta_2)$. 

Cf. \cite[p.660-661]{G06} for a fuller discussion of this type of issue, and Example \ref{ex:curveiso} in Section \ref{sec:higherzoo} for another occurrence of this phenomenon in the RIF context.
\end{example}

We end this section by posing a problem.
\begin{question}
What possible configurations can arise in the Newton polygon associated with $\rho_{\phi}$, where $\phi$ is a three-variable RIF?
\end{question}

\section{3-variable RIFs: Boundary Values and Unimodular level sets} \label{sec:3var2}

 In this section, we continue our  study of three-variable irreducible RIFs $\phi$ with degree $(m,n,1)$. We will study both their non-tangential boundary values at points on $\mathcal{Z}_p \cap \mathbb{T}^3$ and the structure of their unimodular level sets $\mathcal{C}_{\lambda}$ on $\mathbb{T}^3$. 

\subsection{Boundary values}
Let us consider the non-tangential boundary values of $\phi$  on $\mathbb{T}^3$.  By non-tangential, we mean that $z \in \mathbb{D}^3$ approaches $\tau$ in a region where $\| z - \tau \| \le C (1- \|z\|)$, for some positive constant $C$. By \cite[Corollary 14.6]{Kne15}, every rational inner function $\phi$ has a non-tangential limit $\lambda \in \mathbb{T}$ at each $\tau \in \mathbb{T}^3$, which we denote by $\phi^*(\tau)$. We have the following characterization of non-tangential limit points:

\begin{theorem} \label{thm:lim} Let $(\tau_1, \tau_2) \in \mathbb{T}^2$. Then 
\begin{itemize}
\item[A.]   If $(\tau_1, \tau_2, \tau_3) \not \in \mathcal{Z}_p$ for all $\tau_3 \in \mathbb{T}$, then 
\[ \phi^*(\tau) = \phi(\tau) \qquad \text{ for all } \tau_3 \in \mathbb{T} \]
and for all $\lambda \in \mathbb{T}$, there is a unique $\tau_3\in \mathbb{T}$ with   $\phi^*(\tau)  = \lambda.$
\item[B.] If $(\tau_1, \tau_2, \zeta_3) \in \mathcal{Z}_p$ for exactly one $\zeta_3 \in \mathbb{T}$, then
\[ \phi^*(\tau) = \frac{\tilde{p}_2(\tau_1, \tau_2)}{p_1(\tau_1, \tau_2)} \in \mathbb{T} \qquad \text{ for all } \tau_3 \in \mathbb{T}. \]
\item[C.] If $\{(\tau_1, \tau_2)\}  \times \mathbb{T} \in \mathcal{Z}_p$, then $\mu : = \lim_{r\nearrow 1}  \frac{\tilde{p}_2(r \tau_1, r\tau_2)}{p_1(r\tau_1, r\tau_2)}$ exists and 
\begin{itemize}
\item[C1.] If $|\mu| =1$, for all $\tau_3$ (except possibly one), $\phi^*(\tau) = \mu.$
\item[C2.] If $|\mu| <1$, then for all $\lambda \in \mathbb{T}$, there is a unique $\tau_3 \in \mathbb{T}$ with $\phi^*(\tau)  = \lambda.$
\end{itemize}
\end{itemize}
\end{theorem}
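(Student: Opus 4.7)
The proof proceeds by a case analysis on the structure of $\mathcal{Z}_p \cap (\{(\tau_1, \tau_2)\} \times \mathbb{T})$. Since $p(\tau_1, \tau_2, z_3) = p_1(\tau_1,\tau_2) + z_3 \, p_2(\tau_1,\tau_2)$ is linear in $z_3$, this set is either empty (Case A), a single point $\zeta_3 = -p_1(\tau_1,\tau_2)/p_2(\tau_1,\tau_2) \in \mathbb{T}$ (Case B), or all of $\{(\tau_1, \tau_2)\}\times \mathbb{T}$, which is forced by $p_1(\tau_1,\tau_2) = p_2(\tau_1,\tau_2) = 0$ (Case C). In every case, the information is extracted from the sliced M\"obius transformation $\phi(\tau_1, \tau_2, z_3) = (z_3\tilde p_1+\tilde p_2)/(p_1+z_3 p_2)$ combined with the boundary reflection identity $\tilde{p}_j(\zeta) = \zeta_1^m \zeta_2^n \overline{p_j(\zeta)}$ valid on $\mathbb{T}^2$.

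In Case A, $p$ is nonvanishing on a neighborhood of the entire vertical circle, so $\phi$ extends smoothly there and $\phi^* = \phi$ by continuity. The sliced function maps $\mathbb{T}$ continuously to $\mathbb{T}$. To see it is a genuine bijection (rather than a constant $c$), one considers the subcases $p_1(\tau_1,\tau_2) = 0$, $p_2(\tau_1,\tau_2) = 0$, or both nonzero; constancy in the last case forces $\tilde p_1 = c p_2$ and $\tilde p_2 = c p_1$ at $(\tau_1,\tau_2)$, whence $|p_1|=|p_2|$, which places a zero of $p_1 + z_3 p_2$ on $\mathbb{T}$ and contradicts Case A, while in the first two subcases the sliced function is manifestly a non-constant M\"obius map.

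For Case B, boundedness of $\phi$ on $\mathbb{D}^3$ forces $\tilde p(\tau_1, \tau_2, \zeta_3) = 0$, and combined with $p(\tau_1,\tau_2,\zeta_3)=0$ one obtains $\zeta_3 = -\tilde p_2/\tilde p_1 = -p_1/p_2$, so the common linear factor $(z_3 - \zeta_3)$ cancels and the sliced function collapses to the constant $\mu = \tilde p_2/p_1 = \tilde p_1/p_2 \in \mathbb{T}$ (with $|\mu|=1$ since $|p_1|=|p_2|$ at $(\tau_1,\tau_2)$). For $\tau_3 \ne \zeta_3$, continuity of $\phi$ yields $\phi^*(\tau) = \mu$ immediately. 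For $\tau_3 = \zeta_3$, the polynomial $\tilde p - \mu p$ vanishes identically on the complex line $\{(\tau_1, \tau_2)\}\times \mathbb{C}$, giving a factorization $\tilde p - \mu p = (z_1-\tau_1)P(z) + (z_2 - \tau_2) Q(z)$. Combining this with Knese's existence theorem for non-tangential limits of RIFs and a Julia-type lower bound $|p(z)| \gtrsim 1 - \|z\|$ along non-tangential approaches to $\tau$, one concludes $\phi - \mu \to 0$ non-tangentially. I expect the bookkeeping for the Julia estimate on the three-variable stable polynomial $p$ to be the main technical obstacle; a cleaner alternative is to invoke Knese's theorem for the two-variable RIF obtained by slicing $\phi$ at $z_3 = \zeta_3$ and use consistency with the Case B values at $\tau_3 \ne \zeta_3$.

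For Case C, all four of $p_1, p_2, \tilde p_1, \tilde p_2$ vanish at $(\tau_1, \tau_2)$ (by the reflection identity). The ratios $\alpha = \tilde p_1/p_1$, $\beta = \tilde p_2/p_1$, and $\gamma = p_2/p_1$ are holomorphic on $\mathbb{D}^2$ (since $p_1$, the $z_3=0$ slice of $p$, is a two-variable stable polynomial) and bounded by $1$ on $\mathbb{T}^2$ by \eqref{eqn:p}, with $|\alpha|\equiv 1$ so that $\alpha$ is itself a two-variable RIF. A Taylor/L'H\^opital argument along $r \mapsto (r\tau_1, r\tau_2)$ shows that radial limits $\alpha^*, \mu, \nu$ all exist (establishing in particular the existence of $\mu$), and substitution yields the limit M\"obius transformation
\[
L(z_3) = \frac{z_3 \alpha^* + \mu}{1 + z_3 \nu}
\]
describing the boundary behavior of $\phi$ along this slice. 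The decisive identity is $\alpha^* \bar \nu = \mu$, which I would establish by differentiating $\tilde p_j(\zeta) = \zeta_1^m \zeta_2^n \overline{p_j(\zeta)}$ along two independent directions tangent to $\mathbb{T}^2$ at $(\tau_1,\tau_2)$ and matching directional derivatives. Combined with $|\mu|=|\nu|$ (immediate from $|\tilde p_2|=|p_2|$ on $\mathbb{T}^2$), this yields in Case C1 ($|\mu|=1$, hence $|\nu|=1$) the stronger relation $\alpha^* = \mu\nu$, whence $L(z_3)\equiv \mu$ except at the removable $0/0$ indeterminacy $z_3=-1/\nu\in \mathbb{T}$ (accounting for the ``possibly one'' exceptional $\tau_3$), while in Case C2 ($|\mu|<1$) the map $L$ is a non-degenerate M\"obius bijection $\mathbb{T}\to\mathbb{T}$, giving the uniqueness statement. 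Finally, the radial limit computed along $(r\tau_1, r\tau_2)$ is promoted to a full non-tangential limit at $(\tau_1,\tau_2,\tau_3)$ via the polydisk Julia--Carath\'eodory theorem applied to the bounded rational functions $\alpha,\beta,\gamma$.
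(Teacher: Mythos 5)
Your outline for Case A is essentially the paper's (the subcase discussion is superfluous: the Case A hypothesis combined with $|p_1|\ge|p_2|$ on $\mathbb{T}^2$ from \eqref{eqn:p} forces $p_1(\tau_1,\tau_2)\neq 0$ and $|p_2(\tau_1,\tau_2)|<|p_1(\tau_1,\tau_2)|$, so the sliced map is a genuine Blaschke factor outright). The shape of Case C also matches. But the proposal has a genuine gap in Case B and goes a different, unverified route in Case C.

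The real gap is Case B at the singular point $\tau_3=\zeta_3$. Your ``cleaner alternative'' --- invoke Knese's existence theorem and ``use consistency with the Case B values at $\tau_3\neq\zeta_3$'' --- is not an argument: the non-tangential boundary value at a singular point of a RIF need not agree with the values at nearby points $\tau_3\neq\zeta_3$, and indeed such boundary (dis)continuity is precisely what the theorem is probing. Your first suggestion does not work either: the factorization $\tilde{p}-\mu p=(z_1-\tau_1)P(z)+(z_2-\tau_2)Q(z)$ together with a Julia-type bound $|p(z)|\gtrsim 1-\|z\|$ only yields $|\phi(z)-\mu|\lesssim\|z-\tau\|/(1-\|z\|)$, which on a non-tangential approach region is merely bounded, not $o(1)$. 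The paper isolates this step into Lemma \ref{lem:lim1}: it writes $p$ in its homogeneous expansion about $\tau$, uses the explicit degree-one term $a(z_3-\tau_3)$ in $p$ to force the vanishing order to be exactly $M=1$, and then invokes Propositions 14.3 and 14.5 of \cite{Kne15} to conclude $\phi^*(\tau)$ equals the unimodular constant relating the leading homogeneous parts of $\tilde{p}$ and $p$, which it then evaluates by a facial limit $\lim_{r\nearrow 1}\phi(\tau_1,\tau_2,r\tau_3)$. Nothing in your sketch supplies that ingredient.

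In Case C you identify the correct constraint $\mu=\alpha^*\bar{\nu}$ (equivalently $\alpha\bar{\mu}=\nu$), but the paper obtains it much more directly: since $\phi^*(\tau)$ exists and is unimodular for every $\tau_3\in\mathbb{T}$ (except one), the identity $|\tau_3\alpha+\beta|^2=|1+\tau_3\gamma|^2$ holds as a trigonometric polynomial in $\tau_3$, and matching coefficients gives $\alpha\bar{\beta}=\gamma$. Your proposed route --- differentiating $\tilde{p}_j=\zeta_1^m\zeta_2^n\overline{p_j}$ along two tangential directions --- would require matching the (possibly unequal) orders of vanishing of $p_1,p_2,\tilde{p}_1,\tilde{p}_2$ at $(\tau_1,\tau_2)$, and you have not carried this out. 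Likewise, the paper neither invokes a polydisk Julia--Carath\'eodory theorem nor a Taylor/L'H\^opital argument to get the radial limits: it passes to convergent subsequences of the bounded functions $\tilde{p}_1/p_1$, $\tilde{p}_2/p_1$, $p_2/p_1$ and then uses the a priori existence of $\phi^*(\tau)$ (Knese, \cite[Cor.~14.6]{Kne15}) to show the subsequential limit $\beta$, and hence $\mu$, is independent of the subsequence. Your routes may be salvageable, but as written they substitute heavier, unverified machinery for the paper's simpler argument and do not close the logical gap in Case B.
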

\begin{remark} Note that Theorem \ref{thm:lim}B,C handle all points in $\mathcal{Z}_p \cap \mathbb{T}^d$. In particular, if $p$ is of the form \eqref{eqn:mn1}, then for each $\tau=(\tau_1, \tau_2,\tau_3) \in \mathcal{Z}_p \cap \mathbb{T}^3$ either $\tau_3 = -\tfrac{p_1}{p_2}(\tau_1,\tau_2)$ is the unique $\zeta_3 \in \mathbb{T}$ so that $(\tau_1, \tau_2,\zeta_3) \in \mathcal{Z}_p \cap \mathbb{T}^3$ or the entire vertical line $\{(\tau_1,\tau_2)\} \times \mathbb{T}$ is in $\mathcal{Z}_p \cap \mathbb{T}^d$. 
\end{remark} 

For RIFs with a finite singular set, we have  the following corollary:

\begin{corollary} \label{cor:1z} Let $\mathcal{Z}_{p} \cap \mathbb{T}^3$ be finite. Then for each 
$(\tau_1, \tau_2, \zeta_3) \in \mathcal{Z}_p \cap \mathbb{T}^3$, \[ \phi^*(\tau) = \frac{\tilde{p}_2(\tau_1, \tau_2)}{p_1(\tau_1, \tau_2)} \in \mathbb{T} \qquad \text{ for all } \tau_3 \in \mathbb{T}. \]
\end{corollary}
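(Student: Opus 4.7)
The plan is to reduce Corollary \ref{cor:1z} to the case (B) of Theorem \ref{thm:lim} by exploiting the finiteness hypothesis on $\mathcal{Z}_p \cap \mathbb{T}^3$. Fix $(\tau_1,\tau_2,\zeta_3) \in \mathcal{Z}_p \cap \mathbb{T}^3$; I want to verify that exactly one $\zeta_3 \in \mathbb{T}$ has $(\tau_1,\tau_2,\zeta_3) \in \mathcal{Z}_p$ so that clause (B) of the theorem applies and delivers the stated non-tangential limit.

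Using the $(m,n,1)$ representation from \eqref{eqn:mn1}, $p(z) = p_1(z_1,z_2) + z_3 p_2(z_1,z_2)$, so the equation $p(\tau_1,\tau_2,\zeta_3)=0$ becomes the linear equation $p_1(\tau_1,\tau_2) + \zeta_3\, p_2(\tau_1,\tau_2)=0$. I split into two cases based on whether $p_2(\tau_1,\tau_2)$ vanishes. If $p_2(\tau_1,\tau_2) \neq 0$, then $\zeta_3$ is uniquely determined, namely $\zeta_3 = -p_1(\tau_1,\tau_2)/p_2(\tau_1,\tau_2)$, so case (B) applies. If $p_2(\tau_1,\tau_2)=0$, then since $\zeta_3$ satisfies the equation we must also have $p_1(\tau_1,\tau_2)=0$; but then the entire vertical line $\{(\tau_1,\tau_2)\}\times\mathbb{T}$ lies in $\mathcal{Z}_p \cap \mathbb{T}^3$, contradicting the assumption that this set is finite. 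So only the first case occurs, and Theorem \ref{thm:lim}(B) yields
\[ \phi^*(\tau) = \frac{\tilde{p}_2(\tau_1,\tau_2)}{p_1(\tau_1,\tau_2)} \in \mathbb{T} \]
for every $\tau_3 \in \mathbb{T}$.

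There is essentially no obstacle here: the only small point worth noting is that finiteness of $\mathcal{Z}_p \cap \mathbb{T}^3$ is precisely what rules out clause (C) of Theorem \ref{thm:lim}, and irreducibility (via the fact that $p_1,p_2$ share no common factor, already used in Lemma \ref{lem:Flines}) is not even required for a single point--one only needs that $p_1(\tau_1,\tau_2)$ and $p_2(\tau_1,\tau_2)$ do not both vanish, which is forced by finiteness as above. The formula for $\phi^*(\tau)$ is then just the content of case (B) of Theorem \ref{thm:lim}, applied pointwise.
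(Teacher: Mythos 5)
Your proof is correct and follows the same route the paper intends: it deduces the corollary from Theorem \ref{thm:lim}(B) by observing that finiteness of $\mathcal{Z}_p \cap \mathbb{T}^3$ rules out the vertical-line case. The dichotomy you verify by hand (exactly one $\zeta_3$ versus a full vertical line, governed by whether $p_1(\tau_1,\tau_2)$ and $p_2(\tau_1,\tau_2)$ simultaneously vanish) is exactly the observation recorded in the remark immediately following Theorem \ref{thm:lim}, so your argument matches the paper's.
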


Here is the proof of Theorem \ref{thm:lim}:

\begin{proof}
For (A), the zero set assumption implies that $|p_2(\tau_1, \tau_2)| < |p_1(\tau_1, \tau_2)|.$ Then continuity implies that 
\[ \phi^*(\tau) = \phi(\tau) = \frac{ \tilde{p}_2(\tau_1, \tau_2)  + \tau_3 \tilde{p}_1(\tau_1, \tau_2)}{ p_1(\tau_1, \tau_2) + \tau_3 p_2(\tau_1,\tau_2)} = \frac{\tilde{p}_1}{p_1}(\tau_1, \tau_2) \left ( \frac{ \tau_3 + \frac{ \bar{p}_2}{\bar{p}_1}(\tau_1, \tau_2)} {1+ \tau_3\frac{p_2}{p_1}(\tau_1,\tau_2)}\right),\]
which is a Blaschke factor in $\tau_3$.  Thus, it is a one-to-one map from $\mathbb{T}$ to $\mathbb{T}$, which proves the claim.

For (B), the zero set assumption implies that $|p_2(\tau_1, \tau_2)| = |p_1(\tau_1, \tau_2)| \ne 0.$ If $\tau \not \in \mathcal{Z}_p$, continuity gives
\[  \phi^*(\tau) = \phi(\tau) = \frac{ \tilde{p}_2(\tau_1, \tau_2)  + \tau_3 \tilde{p}_1(\tau_1, \tau_2)}{ p_1(\tau_1, \tau_2) + \tau_3 p_2(\tau_1,\tau_2)}. \]
Because there is some $\zeta_3$ where both the numerator and denominator of $\phi$ vanish at ($\tau_1, \tau_2, \zeta_3)$, we can solve for $\zeta_3$ and conclude that 
\[  - \frac{p_1(\zeta_1, \zeta_2)}{p_2(\zeta_1, \zeta_2} = - \frac{\tilde{p}_2(\tau_1, \tau_2)}{\tilde{p}_1(\tau_1, \tau_2)}.\]
 Using that in the limit expression gives
\[  \phi^*(\tau) =  \frac{ \tilde{p}_2(\tau_1, \tau_2)}{ p_1(\tau_1, \tau_2)} \left( \frac{1 + \tau_3 \frac{ \tilde{p}_1}{\tilde{p}_2}(\tau_1, \tau_2)}{1 + \tau_3 \frac{p_2}{p_1}(\tau_2, \tau_2)}  \right) =  \frac{ \tilde{p}_2(\tau_1, \tau_2)}{ p_1(\tau_1, \tau_2)} .\]
The case where $\tau \in \mathcal{Z}_p$, equivalently when $\tau_3 = \zeta_3 = -\frac{p_1(\tau_1, \tau_2)}{p_2(\tau_1, \tau_2)},$ is proved in Lemma \ref{lem:lim1} below. 

For (C), observe that $\frac{\tilde{p}_1}{p_1}$, $\frac{p_2}{p_1}$ and $\frac{\tilde{p}_2}{p_1}$ are all holomorphic functions that are bounded by $1$ on $\mathbb{D}^2$. Then if $(r_n) \nearrow 1$, by passing to a subsequence, we can assume that 
\[ \frac{\tilde{p}_1}{p_1}(r_n \tau_1, r_n \tau_2) \rightarrow \alpha,  \ \  \frac{\tilde{p}_2}{p_1}(r_n \tau_1, r_n \tau_2) \rightarrow \beta, \ \ \text{ and }  \frac{p_2}{p_1}(r_n \tau_1, r_n \tau_2) \rightarrow \gamma,\]
for numbers $\alpha, \beta, \gamma$ with $\alpha \in \mathbb{T}$ and $\beta, \gamma \in \overline{\mathbb{D}}.$ Then for all $\tau_3 \in \mathbb{T}$ (except possibly if $\tau_3 =- \bar{\gamma}$), since $\phi^*(\tau)$ exists, we have
\[ \phi^*(\tau) =  \lim_{z \rightarrow \tau \ n.t.} \frac{ z_3\frac{\tilde{p}_1}{p_1}(z_1, z_2) + \frac{\tilde{p_2}}{p_1}(z_1,z_2)}{1 + z_3\frac{p_2}{p_1}(z_1,z_2)} = \frac{\tau_3 \alpha + \beta}{1+ \tau_3\gamma} \in \mathbb{T}.\]
This gives $\left| \tau_3 \alpha + \beta \right |^2 = \left| 1+ \tau_3\gamma \right|^2$ and expanding these out as trigonometric polynomials and comparing coefficients gives $\alpha \bar{\beta} = \gamma.$ Thus
\[  \phi^*(\tau) = \frac{\tau_3 \alpha + \beta}{1+ \tau_3\alpha \bar{\beta}}.\]
Now we have the two cases. If $\beta \in \mathbb{T}$, then for every $\tau_3 \in \mathbb{T}$, this limit is equal to $\beta$.  
Otherwise $\beta \in \mathbb{D}$, every $\tau_3 \ne -\bar{\gamma}$, and the limit formula is a Blaschke factor in $\tau_3;$ thus for each $\lambda \in \mathbb{T}$, there is a unique  $\tau_3 \in \mathbb{T}$ with $\phi^*(\tau)= \lambda.$

Finally, if $\lim_{r\nearrow 1}  \frac{\tilde{p}_2(r \tau_1, r\tau_2)}{p_1(r\tau_1, r\tau_2)}$ did not exist, then there would be some other sequence $(s_n) \nearrow 1$ so that 
\[ \frac{\tilde{p}_2}{p_1}(s_n \tau_1, s_n \tau_2) \rightarrow \nu \ne \beta.\]
Since $\left(\frac{\tilde{p}_1}{p_1}\right)^*(\tau_1, \tau_2)$ exists, it must happen that $\frac{\tilde{p}_1}{p_1}(s_n \tau_1, s_n \tau_2) \rightarrow \alpha$, the same value as before. These limits would imply different values for the $\phi^*(\tau)$, which is not possible. Thus,  $\mu:=\lim_{r\nearrow 1}  \frac{\tilde{p}_2(r \tau_1, r\tau_2)}{p_1(r\tau_1, r\tau_2)}$ is well defined and the  two cases occur when $|\mu|=1$ and $|\mu|<1.$
\end{proof}

\begin{remark} Both cases in Theorem \ref{thm:lim}C can occur. First consider $\phi = \frac{\tilde{p}}{p}$, where 
\[ p(z) = (2-z_1-z_2) +z_3\tfrac{1}{2}(2z_1 z_2 -z_1-z_2).\]
Then $\{(1,1)\} \times \T \subseteq \mathcal{Z}_p$ and $\lim_{r\nearrow 1}  \frac{\tilde{p}_2(r, r)}{p_1(r, r)} = \tfrac{1}{2}.$
Then for each $\tau_3 \in \T$, straightforward computation gives:
\[ 
\phi^*(1,1,\tau_3)  = \lim_{r \nearrow 1} \frac{ (2r^2-2r)r\tau_3 + \tfrac{1}{2}(2-2r)}{ (2-2r) + r \tau_3 \tfrac{1}{2}(2r^2-2r)} \\
 =  \lim_{r \nearrow 1}  \frac{ 2r^2 \tau_3 -1}{-2 +r^2 \tau_3} \\
 = - \frac{\tau_3 - \tfrac{1}{2}}{1 -\tfrac{\tau_3}{2}},
 \]
 a Blaschke factor in $\tau_3.$
 
Now consider $\psi = \frac{\tilde{p}}{p}$, where 
\[ p(z) = (2-z_1-z_2) +z_3\tfrac{1}{2}(1+z_1)(2z_1 z_2 -z_1-z_2).\]
Then $\{(1,1)\} \times \T \subseteq \mathcal{Z}_p$. Here, $\lim_{r\nearrow 1}  \frac{\tilde{p}_2(r, r)}{p_1(r, r)} = 1.$
Then for each $\tau_3 \in \T$ with $\tau_3 \ne 1$, a straightforward computation gives:
\[ 
\begin{aligned}
\psi^*(1,1, \tau_3)   = \lim_{r\nearrow 1} \frac{ r\tau_3(2r^3-2r^2) + \tfrac{1}{2} (1+r) (2-2r)}{(2-2r) + r\tau_3 \tfrac{1}{2} (r+1) (2r^2-2r)} 
= \lim_{r\nearrow 1} \frac{ 2r^3 \tau_3 - (1+r)}{-2 + r^2 \tau_3 (1+r)} 
 = \frac{2 \tau_3-2}{2\tau_3-2} =1.
\end{aligned}
\]
An application of L'H\^opital's rule implies that $\psi^*(1,1,1)=1$ as well.
\end{remark}

To finish the proof of Theorem \ref{thm:lim}, we need the following:

\begin{lemma} \label{lem:lim1} Assume $\tau =(\tau_1, \tau_2, \tau_3) \in \mathcal{Z}_p \cap \mathbb{T}^3$ and the vertical line $\{(\tau_1, \tau_2)\}  \times \mathbb{T}$ is not a component of $\mathcal{Z}_p$.  Then  $\phi^*(\tau) =\frac{\tilde{p}_2(\tau_1, \tau_2)}{p_1(\tau_1, \tau_2)}.$
\end{lemma}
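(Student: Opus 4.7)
The plan is to identify $\phi^*(\tau)$ by computing the limit of $\phi$ along one carefully chosen non-tangential curve, taking advantage of the $(m,n,1)$-structure. Since \cite[Corollary 14.6]{Kne15} guarantees that the non-tangential limit $\phi^*(\tau)$ exists, it will suffice to exhibit a single non-tangential path $\gamma$ into $\tau$ along which $\phi(\gamma)$ converges to $\tilde{p}_2(\tau_1,\tau_2)/p_1(\tau_1,\tau_2)$.

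First I would record some algebraic consequences of the hypotheses. The assumption that $\{(\tau_1,\tau_2)\} \times \T$ is not a component of $\mathcal{Z}_p$ means $p_1$ and $p_2$ cannot both vanish at $(\tau_1,\tau_2)$, so combined with $p_1(\tau_1,\tau_2)+\tau_3 p_2(\tau_1,\tau_2)=0$ and the reflection relations $\tilde{p}_i(\tau_1,\tau_2)=\tau_1^m\tau_2^n\overline{p_i(\tau_1,\tau_2)}$, the values $p_1(\tau_1,\tau_2), p_2(\tau_1,\tau_2), \tilde{p}_1(\tau_1,\tau_2), \tilde{p}_2(\tau_1,\tau_2)$ are all nonzero. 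The identities $\tau_3=-p_1/p_2=-\tilde{p}_2/\tilde{p}_1$ at $(\tau_1,\tau_2)$ then produce
\[
\frac{\tilde{p}_1(\tau_1,\tau_2)}{p_2(\tau_1,\tau_2)} \,=\, \frac{\tilde{p}_2(\tau_1,\tau_2)}{p_1(\tau_1,\tau_2)},
\]
which will let me recognize the eventual limit in the form appearing in the statement.

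The main step is to introduce the path
\[
\gamma(\epsilon) \,=\, \bigl((1-\epsilon^2)\tau_1,\,(1-\epsilon^2)\tau_2,\,(1-\epsilon)\tau_3\bigr), \qquad \epsilon \in (0,1).
\]
Since $|\gamma_j(\epsilon)-\tau_j|=1-|\gamma_j(\epsilon)|$ for each coordinate, $\gamma$ lies in a non-tangential cone at $\tau$. The key feature of the choice is that the first two coordinates approach $(\tau_1,\tau_2)$ at rate $\epsilon^2$, while $z_3$ approaches $\tau_3$ at the slower rate $\epsilon$, so that in a Taylor expansion along $\gamma$ only the variation in $z_3$ contributes at leading order. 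Using the $(m,n,1)$-decomposition together with the vanishing $p_1+\tau_3 p_2=0$ and $\tilde{p}_2+\tau_3\tilde{p}_1=0$ at $(\tau_1,\tau_2)$, I would obtain
\[
p(\gamma(\epsilon))=-\epsilon\,\tau_3\, p_2(\tau_1,\tau_2)+O(\epsilon^2),\qquad \tilde{p}(\gamma(\epsilon))=-\epsilon\,\tau_3\,\tilde{p}_1(\tau_1,\tau_2)+O(\epsilon^2),
\]
whose leading coefficients are nonzero, so dividing and letting $\epsilon\to 0^+$ yields the limit $\tilde{p}_1(\tau_1,\tau_2)/p_2(\tau_1,\tau_2)$. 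By the identity in the previous paragraph this equals $\tilde{p}_2(\tau_1,\tau_2)/p_1(\tau_1,\tau_2)$, and existence of the non-tangential limit forces $\phi^*(\tau)$ to coincide with this value.

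The main obstacle here is conceptual rather than computational: radial and other ``balanced'' non-tangential approaches produce an indeterminate form whose resolution via L'H\^opital leads to the expression $E_3\tilde{p}(\tau)/E_3 p(\tau)$ involving all three partial derivatives, and verifying directly that this equals $\tilde{p}_2/p_1$ at $(\tau_1,\tau_2)$ requires a nonobvious identity for boundary derivatives of the reflected polynomial. Choosing $\gamma$ with $z_3$ approaching strictly slower than $(z_1,z_2)$ sidesteps this difficulty by collapsing the limit to a one-variable M\"obius-type computation in $z_3$, where the value at the pole of the slice $\phi(z_1,z_2,\cdot)$ is transparent.
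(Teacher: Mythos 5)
Your computation along $\gamma(\epsilon) = \bigl((1-\epsilon^2)\tau_1, (1-\epsilon^2)\tau_2, (1-\epsilon)\tau_3\bigr)$ is correct, and the resulting limit is indeed $\tilde{p}_2(\tau_1,\tau_2)/p_1(\tau_1,\tau_2)$. However, the claim that $\gamma$ ``lies in a non-tangential cone at $\tau$'' is false, and this is a genuine gap. The paper's notion of non-tangential requires $\|z-\tau\| \le C(1-\|z\|)$ in the polydisk (max) norm, i.e. $\max_j |z_j - \tau_j| \le C \min_j (1-|z_j|)$; the approach rates in the different coordinates must be \emph{comparable}. Along $\gamma$, the left side is $\epsilon$ and the right side is $\epsilon^2$, so the ratio is $1/\epsilon \to \infty$. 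Coordinate-wise radial approach (the property $|\gamma_j(\epsilon)-\tau_j| = 1-|\gamma_j(\epsilon)|$ in each slot) does \emph{not} imply non-tangentiality in $\mathbb{D}^3$: that would require a uniform bound relating the fastest coordinate to the slowest, which your path precisely violates by design. Consequently the limit you compute is a tangential limit, and the existence of $\phi^*(\tau)$ from \cite[Corollary 14.6]{Kne15} does not let you identify it with $\phi^*(\tau)$.

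This is not an incidental defect: it is exactly the difficulty the paper's proof is organized around. The paper also first computes a degenerate (facial) limit $\lim_{r\nearrow 1}\phi(\tau_1,\tau_2,r\tau_3)$ --- in effect the $c\to 0$ limit of your family of paths --- and gets $\tilde{p}_2/p_1$, but then spends a separate argument (order of vanishing $M=1$ via the $(m,n,1)$-degree structure, plus Propositions 14.3 and 14.5 of \cite{Kne15} giving $\tilde{p}=\mu P_1+\cdots$ with $\mu\in\mathbb{T}$) showing that this tangential/facial value coincides with the non-tangential limit. That second half is what your proposal is missing. If you want to stay with a path-based argument you would need a genuinely non-tangential path, say $z_j=(1-c\epsilon)\tau_j$ for $j=1,2$ and $z_3=(1-\epsilon)\tau_3$ with fixed $c>0$; but then the first-order term of $p(\gamma(\epsilon))$ picks up $c\epsilon$ times the Euler-type combination $\tau_1\partial_1 p+\tau_2\partial_2 p$ evaluated at $\tau$ in addition to $-\epsilon\tau_3 p_2$, and showing that the resulting ratio is still $\tilde{p}_2/p_1$ (equivalently, is independent of $c$) is precisely the content of Knese's Proposition 14.5 that the first-order homogeneous parts of $p$ and $\tilde{p}$ at $\tau$ differ by a unimodular constant. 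So the ``nonobvious identity for boundary derivatives of the reflected polynomial'' that you flag as an obstacle cannot actually be sidestepped; some version of it is needed.
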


\begin{proof} Since $\{(\tau_1, \tau_2)\}  \times \mathbb{T}$ is not a component of $\mathcal{Z}_p$, the polynomials $p_1$ and $p_2$ do not vanish at $(\tau_1, \tau_2)$ and
each  $\phi(\tau_1, \tau_2, r\tau_3)$  is well defined. By assumption, $\tau_3 = - \frac{p_1(\zeta_1, \zeta_2)}{p_2(\zeta_1, \zeta_2)} = - \frac{\tilde{p}_2(\tau_1, \tau_2)}{\tilde{p}_1(\tau_1, \tau_2)}.$ Then we can compute the facial limit
\[ 
\begin{aligned}
\lim_{r \nearrow 1} \phi(\tau_1, \tau_2, r \tau_3) &= \lim_{r \nearrow 1}  \frac{r \tau_3 \tilde{p}_1(\tau_1, \tau_2) + \tilde{p}_2(\tau_1,\tau_2)}{p_1(\tau_1, \tau_2) + r\tau_3 p_2(\tau_1, \tau_2)} \\
&= \lim_{r\nearrow 1} \frac{-r \tilde{p}_2(\tau_1,\tau_2) + \tilde{p}_2(\tau_1,\tau_2)}{p_1(\tau_1, \tau_2 ) - rp_1(\tau_1,\tau_2)}\\
 &= \frac{\tilde{p}_2(\tau_1, \tau_2)}{p_1(\tau_1, \tau_2)}.
\end{aligned}
\]
Now we need only show  $\phi^*(\tau)=\lim_{r \nearrow 1} \phi(\tau_1, \tau_2, r\tau_3).$ Assume $p$ vanishes to order $M$ at $\tau=(\tau_1, \tau_2, \tau_3)$. This means we can write 
\[ p(z) = P_M(z) + \sum_{j=M+1}^{m+n+1} P_j(z)\]
where each $P_j$ is homogeneous of degree $j$ in the terms $(z_1-\tau_1),(z_2-\tau_2),(z_3-\tau_3)$. As $p$ does not vanish identically when $z_1=\tau_1$ and $z_2=\tau_2$ and $\deg p = (m,n,1)$, this expansion must contain a term of the form $a(z_3-\tau_3)$.  As $M \ge 1$, this term must be part of $P_M$, so $M=1$. 
By Proposition 14.5 in \cite{Kne15}, there is a $\mu \in \mathbb{T}$ and homogeneous polynomials $Q_j$ in $(z_1-\tau_1),(z_2-\tau_2),(z_3-\tau_3)$ so that 
\[ \tilde{p}(z) = \mu P_1(z) + \sum_{j=2}^{m+n+1} Q_j(z).\]
Then Proposition 14.3 in  \cite{Kne15} implies that $\phi^*(\tau) = \mu.$ 
We can also compute the facial limit
\[  \lim_{r \nearrow 1} \phi(\tau_1, \tau_2, r\tau_3) = \lim_{r \nearrow 1} \frac{\mu P_1(\tau_1, \tau_2, r\tau_3) + \sum_{j=2}^{m+n+1} Q_j(\tau_1, \tau_2, r\tau_3)}{ P_1(\tau_1, \tau_2, r\tau_3) + \sum_{j=2}^{m+n+1} P_j(\tau_1, \tau_2, r\tau_3)} = \lim_{r \nearrow 1}  \frac{\mu a(r \tau_3-\tau_3)}{a(r \tau_3-\tau_3) } =\mu.\]
Thus, $\mu = \frac{\tilde{p}_2(\tau_1, \tau_2)}{p_1(\tau_1, \tau_2)}$, as needed.
\end{proof}

\subsection{Unimodular Level Sets}

For each $\lambda \in \mathbb{T}$, define the polynomial
\begin{equation} \label{eqn:ql} q_{\lambda}:= \lambda p_1-\tilde{p}_2.\end{equation}
Then the unimodular level set $\mathcal{C}_{\lambda}$ can be described using this polynomial:

\begin{theorem} \label{lem:RIF} Fix $\lambda \in \mathbb{T}$, define $q_{\lambda}$ as in \eqref{eqn:ql}, and  let  $\Psi_{\lambda}:= \overline{\lambda} \frac{q_{\lambda}}{\tilde{q}_{\lambda}}.$ Then
\begin{itemize}
\item[(i)]  $\big( \mathcal{Z}_{q_{\lambda}} \cap \mathbb{T}^2 \big) \times \mathbb{T} \subseteq \mathcal{C}_{\lambda}.$ 
 \item[(ii)] $1/\Psi_{\lambda}$ is a two-variable RIF and 
 $\mathcal{C}_{\lambda} \setminus ( (\mathcal{Z}_{q_{\lambda}}\cap \mathbb{T}^2) \times \mathbb{T})$ is parameterized by 
\[ \tau_3= \Psi_{\lambda}(\tau_1, \tau_2) = \overline{\lambda} \frac{q_{\lambda}(\tau_1, \tau_2)}{\tilde{q}_{\lambda}(\tau_1, \tau_2)} \text{ for } (\tau_1, \tau_2)\in \mathbb{T}^2.\]
 \end{itemize}
\end{theorem}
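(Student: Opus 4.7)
The plan is to replace $\mathcal{C}_\lambda$ with $\mathcal{L}_\lambda = \{\zeta \in \mathbb{T}^3 : \tilde{p}(\zeta) = \lambda p(\zeta)\}$ via Theorem \ref{thm:CL} and then rewrite $\tilde{p} - \lambda p$ using the decomposition \eqref{eqn:mn1}. A direct calculation yields
\[
\tilde{p}(z) - \lambda p(z) = z_3 \bigl(\tilde{p}_1 - \lambda p_2\bigr)(z_1,z_2) - \bigl(\lambda p_1 - \tilde{p}_2\bigr)(z_1,z_2).
\]
Taking the two-variable $(m,n)$-reflection of $q_\lambda = \lambda p_1 - \tilde{p}_2$, and using that reflection is an involution on polynomials of fixed degree, gives $\tilde{q}_\lambda = \bar{\lambda} \tilde{p}_1 - p_2$, i.e., $\tilde{p}_1 - \lambda p_2 = \lambda \tilde{q}_\lambda$. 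So the equation cutting out $\mathcal{C}_\lambda$ on $\mathbb{T}^3$ becomes $\lambda \zeta_3\, \tilde{q}_\lambda(\zeta_1,\zeta_2) = q_\lambda(\zeta_1,\zeta_2)$.

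Part (i) then follows immediately: since $|s| = |\tilde{s}|$ on $\mathbb{T}^d$ for any polynomial $s$, the vanishing of $q_\lambda$ at $(\zeta_1,\zeta_2) \in \mathbb{T}^2$ forces $\tilde{q}_\lambda$ to vanish there as well, so the identity above holds for every $\zeta_3 \in \mathbb{T}$.

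For (ii), the main step is to prove that $q_\lambda$ does not vanish on $\mathbb{D}^2$. I would exploit that $\phi$ is a non-constant Schur function, so the maximum principle gives $|\phi| < 1$ on $\mathbb{D}^3$; hence $\phi \neq \lambda$ on $\mathbb{D}^3$, and $\tilde{p} - \lambda p = \lambda z_3 \tilde{q}_\lambda - q_\lambda$ cannot vanish there. If $q_\lambda(z_1^0,z_2^0) = 0$ at some $(z_1^0,z_2^0) \in \mathbb{D}^2$, then either $\tilde{q}_\lambda(z_1^0,z_2^0) \neq 0$, in which case $(z_1^0,z_2^0,0) \in \mathbb{D}^3$ would solve $\tilde{p} - \lambda p = 0$, or $\tilde{q}_\lambda(z_1^0,z_2^0) = 0$ as well, in which case $\tilde{p} - \lambda p$ vanishes identically on the disk $\{(z_1^0,z_2^0)\} \times \mathbb{D} \subset \mathbb{D}^3$; either possibility yields a contradiction. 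Once this non-vanishing is in hand, $1/\Psi_\lambda = \lambda \tilde{q}_\lambda/q_\lambda$ is a holomorphic function on $\mathbb{D}^2$ with unimodular boundary values on $\mathbb{T}^2$ (again by $|q_\lambda|=|\tilde{q}_\lambda|$), hence a two-variable RIF. Finally, solving $\lambda \zeta_3 \tilde{q}_\lambda = q_\lambda$ for each $(\zeta_1,\zeta_2) \in \mathbb{T}^2 \setminus \mathcal{Z}_{q_\lambda}$ gives $\zeta_3 = \bar{\lambda} q_\lambda/\tilde{q}_\lambda = \Psi_\lambda(\zeta_1,\zeta_2)$, establishing the claimed parametrization.

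The main obstacle is the non-vanishing of $q_\lambda$ on $\mathbb{D}^2$: it hinges on combining the degree-one-in-$z_3$ structure of $p$ with the fact that $\phi$ is a proper inner function, and requires ruling out both scenarios (single and simultaneous vanishing of $q_\lambda$ and $\tilde{q}_\lambda$). The rest of the argument amounts to reflection bookkeeping together with the elementary identity $|s|=|\tilde{s}|$ on the torus.
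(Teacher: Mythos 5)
Your proof is correct. Part (i) mirrors the paper's argument exactly: pass to $\mathcal{L}_\lambda$ via Theorem \ref{thm:CL}, rewrite $\tilde{p}=\lambda p$ as $\lambda z_3\tilde{q}_\lambda = q_\lambda$, and use $|q_\lambda|=|\tilde{q}_\lambda|$ on $\mathbb{T}^2$ to deduce simultaneous vanishing. For part (ii), however, you establish the key non-vanishing of $q_\lambda$ on $\mathbb{D}^2$ differently. The paper works entirely in two variables: it forms the auxiliary Schur function $F = \tilde{p}_2/p_1$ on $\mathbb{D}^2$, applies the maximum modulus principle, and rules out the degenerate case $F\equiv\alpha\in\mathbb{T}$ by showing $\phi$ would then be constant. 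This yields the $\lambda$-independent strict inequality $|\tilde{p}_2|<|p_1|$ on $\mathbb{D}^2$, from which $q_\lambda = \lambda p_1 - \tilde{p}_2 \neq 0$ drops out for every $\lambda$. You instead argue directly in three variables: if $q_\lambda$ vanished at $(z_1^0,z_2^0)\in\mathbb{D}^2$, then taking $z_3=0$ in the identity $\tilde{p}-\lambda p = \lambda z_3\tilde{q}_\lambda - q_\lambda$ shows $\phi=\lambda$ at an interior point of $\mathbb{D}^3$, contradicting $|\phi|<1$ there (your case split on whether $\tilde{q}_\lambda$ also vanishes is redundant, since $z_3=0$ already kills the $\tilde{q}_\lambda$ term, but it causes no harm). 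Both routes ultimately rest on the strict maximum modulus principle for a non-constant inner function; the paper's version is marginally more informative because it gives the uniform two-variable bound $|\tilde{p}_2|<|p_1|$ once and for all, whereas yours is a per-$\lambda$ contradiction argument that avoids introducing the auxiliary function $F$.
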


\begin{proof}   

To show (i), use Theorem \ref{thm:CL} and observe that rewriting $\tilde{p} = \lambda p$ gives
\begin{equation} \label{eqn:ql2} z_3\left( \tilde{p}_1-\lambda p_2\right) = \lambda p_1-\tilde{p}_2 \ \ \text{ or equivalently, } \ \  z_3\lambda \tilde{q}_{\lambda} = q_{\lambda}.\end{equation}
If $(\tau_1, \tau_2) \in \mathcal{Z}_{q_{\lambda}} \cap \mathbb{T}^2$, then both sides vanish regardless of the value of $z_3.$ Thus, $\left( \mathcal{Z}_{q_{\lambda}} \cap \mathbb{T}^2 \right) \times \mathbb{T} \subseteq \mathcal{C}_{\lambda}.$

To show that $1/\Psi_{\lambda}$ is a RIF, we need only show that $q_{\lambda}$ is non-vanishing on $\mathbb{D}^2$. To see this, note that as $\phi$ is a RIF, $p_1$ is nonvanishing on $\mathbb{D}^2$ and on $\mathbb{T}^2$,
\[ \left | \tilde{p}_2(\tau_1, \tau_2) \right | = \left | p_2(\tau_1, \tau_2) \right| \le  \left | p_1(\tau_1, \tau_2) \right |.\]
This implies that $F:= \frac{\tilde{p}_2}{p_1}$ is holomorphic on $\mathbb{D}^2$ and by the maximum modulus principle on $\mathbb{D}^2$,  we either have $|F(z_1, z_2)| <1$ on $\mathbb{D}^2$ or $F(z_1, z_2) \equiv \alpha$ for some $\alpha \in \mathbb{T}$. This second option leads to a contradiction. Indeed if $F(z_1, z_2)= \alpha$, then $\tilde{p}_2 = \alpha p_1$ and $p_2 = \overline{\alpha} \tilde{p}_1$, which implies that 
\[ \phi(z_1,z_2,z_3) = \frac{ z_3 \tilde{p}_1(z_1,z_2) + \tilde{p}_2(z_1,z_2)}{p_1(z_1,z_2) + z_3 p_2(z_1,z_2)} = \frac{z_3\tilde{p}_1(z_1,z_2) + \alpha p_1(z_1,z_2)}{ p_1(z_1,z_2) + \overline{\alpha} z_3 \tilde{p}_1(z_1,z_2)} = \alpha,\]
a contradiction since $\phi$ is non-constant. This means that $|\tilde{p}_2(z_1,z_2)| < |p_1(z_1,z_2)|$ on $\mathbb{D}^2$ and $q_{\lambda}$ is nonvanishing on $\mathbb{D}^2$. 

To complete $(ii)$, observe that for every point $\tau \in \mathcal{C}_{\lambda} \setminus (\mathcal{Z}_{q_{\lambda}} \times \mathbb{T}),$ we can divide in \eqref{eqn:ql2} to obtain
\[ \tau_3 = \overline{\lambda} \frac{q_{\lambda}(\tau_1, \tau_2)}{\tilde{q}_{\lambda}(\tau_1, \tau_2)} = \Psi_{\lambda}(\tau_1, \tau_2),\]
as needed.
\end{proof}   

There is an interesting relationship between the zero set of $q_{\lambda}$ and non-tangential limits of $\phi$ at points in $\mathcal{Z}_p \cap \mathbb{T}^3$. This is encoded in the following:

\begin{lemma}  \label{lem:lim} Let $(\tau_1, \tau_2) \in \mathbb{T}^2$. Then 
\begin{itemize}
\item[A.] If $(\tau_1, \tau_2, \tau_3) \in \mathcal{Z}_p$ for exactly one $\tau_3 \in \mathbb{T}$, then
\[  \phi^*(\tau) = \lambda \text{ if and only if } q_\lambda(\tau_1, \tau_2) =0. \]
\item[B.] If $\{(\tau_1, \tau_2)\} \times \mathbb{T} \subseteq \mathcal{Z}_p$, then every $q_\lambda(\tau_1, \tau_2) =0$.
\end{itemize}
\end{lemma}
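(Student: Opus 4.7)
For part (A), the plan is to apply Theorem~\ref{thm:lim}(B) directly. Under the hypothesis that $(\tau_1,\tau_2,\zeta_3)\in \mathcal{Z}_p$ for exactly one $\zeta_3\in\mathbb{T}$, that theorem gives
\[ \phi^*(\tau_1,\tau_2,\tau_3) \;=\; \frac{\tilde{p}_2(\tau_1,\tau_2)}{p_1(\tau_1,\tau_2)} \]
for every $\tau_3\in\mathbb{T}$, so the equivalence $\phi^*(\tau)=\lambda \Longleftrightarrow \lambda p_1(\tau_1,\tau_2)-\tilde{p}_2(\tau_1,\tau_2)=0$, i.e. $q_\lambda(\tau_1,\tau_2)=0$, will be immediate once we check $p_1(\tau_1,\tau_2)\neq 0$. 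For that, note that if $p_1(\tau_1,\tau_2)=0$, then from $p_1(\tau_1,\tau_2)+\zeta_3 p_2(\tau_1,\tau_2)=0$ we would get $p_2(\tau_1,\tau_2)=0$ as well, making the whole vertical line $\{(\tau_1,\tau_2)\}\times\mathbb{T}$ sit inside $\mathcal{Z}_p$, which would contradict the single-point hypothesis.

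For part (B), the idea is that the hypothesis forces simultaneous vanishing of $p_1$, $p_2$, and $\tilde{p}_2$ at $(\tau_1,\tau_2)$. Concretely, if $\{(\tau_1,\tau_2)\}\times\mathbb{T}\subseteq \mathcal{Z}_p$, then $p_1(\tau_1,\tau_2)+z_3\,p_2(\tau_1,\tau_2)\equiv 0$ in $z_3$, which means $p_1(\tau_1,\tau_2)=0=p_2(\tau_1,\tau_2)$. Then the inequality $|\tilde{p}_2|=|p_2|\le |p_1|=|\tilde{p}_1|$ on $\mathbb{T}^2$ from \eqref{eqn:p} forces $\tilde{p}_2(\tau_1,\tau_2)=0$ too. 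Consequently, for every $\lambda\in\mathbb{T}$,
\[ q_\lambda(\tau_1,\tau_2)\;=\;\lambda\, p_1(\tau_1,\tau_2)-\tilde{p}_2(\tau_1,\tau_2)\;=\;0, \]
as claimed.

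Neither part looks to present a serious obstacle: part (A) is essentially a translation of the boundary-value identity from Theorem~\ref{thm:lim}(B), and part (B) is a direct reading of the coefficients. The only subtle point is the non-vanishing of $p_1(\tau_1,\tau_2)$ in part (A), which I would handle using the irreducibility/vertical line dichotomy as sketched above. No additional machinery beyond Theorem~\ref{thm:lim} and \eqref{eqn:p} should be required.
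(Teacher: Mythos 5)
Your proof is correct and follows essentially the same route as the paper: part (A) is just a translation of Theorem~\ref{thm:lim}(B) into the language of $q_\lambda$, and part (B) is the direct observation that $p_1(\tau_1,\tau_2)=p_2(\tau_1,\tau_2)=0$ (hence $\tilde{p}_2(\tau_1,\tau_2)=0$ via \eqref{eqn:p}) forces $q_\lambda(\tau_1,\tau_2)=0$ for every $\lambda$. The extra check that $p_1(\tau_1,\tau_2)\neq 0$ in part (A) is a useful piece of detail that the paper leaves implicit inside the proof of Theorem~\ref{thm:lim}(B).
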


\begin{proof} Part (B) is trivial because the zero set assumption implies that 
\[ p_1(\tau_1,\tau_2) = \tilde{p}_2(\tau_1, \tau_2) =0.\]
This shows that, in the case of vertical lines in $\mathcal{Z}_p \cap \mathbb{T}^3$,  $q_{\lambda}$ does not govern the non-tangential limits.
Similarly, (A) follows immediately from Theorem \ref{thm:lim}B and the definition of $q_{\lambda}.$
\end{proof}

This allows us to study singularities of the $\mathcal{C}_{\lambda}$, at least when $\phi$ has only a finite number of singularities on $\mathbb{T}^3.$

\begin{theorem} \label{thm:z2} Assume $\mathcal{Z}_p \cap \mathbb{T}^3$ is a finite set. Fix $\lambda \in \mathbb{T}$. Then
\begin{itemize}
\item[A.] If there is a $\tau \in \mathcal{Z}_p \cap \mathbb{T}^3$ with $ \phi^*(\tau) = \lambda$, then $1/\Psi_{\lambda}$ has a singularity at $(\tau_1, \tau_2) \in \mathbb{T}^2$.
\item[B.] If there are no $\tau \in \mathcal{Z}_p \cap \mathbb{T}^3$ with  $\phi^*(\tau) = \lambda$, then $1/\Psi_{\lambda}$ is continuous on $\overline{\mathbb{D}^2}$.
\end{itemize}
\end{theorem}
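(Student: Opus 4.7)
The plan is to exploit the finiteness of $\mathcal{Z}_p \cap \mathbb{T}^3$ in tandem with Lemmas \ref{lem:Flines} and \ref{lem:lim}. By Lemma \ref{lem:Flines}, no vertical line $\{(\zeta_1,\zeta_2)\}\times\mathbb{T}$ lies in $\mathcal{Z}_p$, so Lemma \ref{lem:lim}(A) applies at every $\tau \in \mathcal{Z}_p \cap \mathbb{T}^3$: $\phi^*(\tau) = \lambda$ if and only if $q_\lambda(\tau_1,\tau_2)=0$. This single dictionary drives both parts.

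For part (B), the hypothesis translates into: no $(\tau_1,\tau_2,\tau_3)\in \mathcal{Z}_p\cap \mathbb{T}^3$ satisfies $q_\lambda(\tau_1,\tau_2)=0$. I will strengthen this to: $q_\lambda$ is nonvanishing on all of $\overline{\mathbb{D}^2}$. The proof of Theorem \ref{lem:RIF} already gives $q_\lambda$ nonvanishing on $\mathbb{D}^2$. Any putative zero $(\zeta_1,\zeta_2) \in \mathcal{Z}_{q_\lambda} \cap \mathbb{T}^2$ must satisfy $\lambda p_1(\zeta_1,\zeta_2)=\tilde p_2(\zeta_1,\zeta_2)$, forcing $|p_1(\zeta_1,\zeta_2)|=|p_2(\zeta_1,\zeta_2)|$: if both moduli are zero we get a vertical line in $\mathcal{Z}_p$ (ruled out by finiteness); otherwise $\tau_3 := -p_1/p_2$ is unimodular, $(\zeta_1,\zeta_2,\tau_3) \in \mathcal{Z}_p\cap\mathbb{T}^3$, and Lemma \ref{lem:lim}(A) yields $\phi^*(\zeta_1,\zeta_2,\tau_3)=\lambda$, violating the hypothesis of (B). So $q_\lambda$ is nonvanishing on $\overline{\mathbb{D}^2}$, whence $1/\Psi_\lambda = \lambda\tilde q_\lambda/q_\lambda$ extends continuously to $\overline{\mathbb{D}^2}$.

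For part (A), Lemma \ref{lem:lim}(A) gives $q_\lambda(\tau_1,\tau_2)=0$, and since $|q_\lambda|=|\tilde q_\lambda|$ on $\mathbb{T}^2$, also $\tilde q_\lambda(\tau_1,\tau_2)=0$. The key step is to apply Theorem \ref{thm:CL} to the two-variable RIF $1/\Psi_\lambda = \lambda\tilde q_\lambda/q_\lambda$: for every $\mu\in\mathbb{T}$, its unimodular level set satisfies $\mathcal{C}_\mu = \mathcal{L}_\mu := \{\zeta\in\mathbb{T}^2 : \lambda\tilde q_\lambda(\zeta)=\mu q_\lambda(\zeta)\}$. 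Because both sides of the defining equation vanish at $(\tau_1,\tau_2)$, the point lies in $\mathcal{L}_\mu$, and hence in $\mathcal{C}_\mu$, for every $\mu\in\mathbb{T}$. If $1/\Psi_\lambda$ extended continuously at $(\tau_1,\tau_2)$ with value $\alpha\in\mathbb{T}$, then for any $\mu \ne \alpha$ any sequence in $\{1/\Psi_\lambda = \mu\} \subseteq \mathbb{T}^2 \setminus \mathcal{Z}_{q_\lambda}$ approaching $(\tau_1,\tau_2)$ (which exists since $(\tau_1,\tau_2)\in\mathcal{C}_\mu$ and $1/\Psi_\lambda$ is undefined at $(\tau_1,\tau_2)$) would force $\mu=\alpha$ by continuity, a contradiction. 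Thus $1/\Psi_\lambda$ cannot extend continuously at $(\tau_1,\tau_2)$, and so has a singularity there.

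The main subtlety is not computational but conceptual: the simultaneous vanishing of $q_\lambda$ and $\tilde q_\lambda$ at $(\tau_1,\tau_2)$ causes every equation $\lambda\tilde q_\lambda = \mu q_\lambda$ to hold trivially at this point, so via Theorem \ref{thm:CL} the point inherits the defining hallmark of a singular point of the two-variable RIF $1/\Psi_\lambda$—membership in every unimodular level set closure—without any need to pass to a reduced atoral form. The finiteness assumption on $\mathcal{Z}_p\cap\mathbb{T}^3$ is precisely what lets us invoke Lemma \ref{lem:lim}(A) uniformly and bypass the pathologies of the vertical-line case (Lemma \ref{lem:lim}(B)), which would otherwise obstruct the clean correspondence between singularities of $\phi$ with non-tangential value $\lambda$ and singularities of $1/\Psi_\lambda$.
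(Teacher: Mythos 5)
Your approach via Lemmas \ref{lem:Flines} and \ref{lem:lim} is the same as the paper's opening move, and your verification in part (B) that $\mathcal{Z}_{q_\lambda}\cap\mathbb{T}^2=\emptyset$ is correct. However, there is a genuine gap in part (B): nonvanishing of $q_\lambda$ on $\mathbb{D}^2$ and on $\mathbb{T}^2$ does not by itself give nonvanishing on $\overline{\mathbb{D}^2}$, since the closed bidisk also contains the face sets $(\mathbb{D}\times\mathbb{T})\cup(\mathbb{T}\times\mathbb{D})$, and a stable polynomial can in general vanish there. You never rule out face zeros, so the stated conclusion ``$q_\lambda$ is nonvanishing on $\overline{\mathbb{D}^2}$'' is unjustified. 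The paper closes this gap by first noting $q_\lambda$ is atoral (it has finitely many, here zero, zeros on $\mathbb{T}^2$) and then invoking \cite[Lemma 10.1]{Kne15} to rule out face zeros. Alternatively, a Hurwitz-type argument like the one used in the proof of Theorem \ref{thm:GenInt} (any face zero of a stable polynomial forces an entire line of zeros on $\mathbb{T}^2$) would also work, but you must say something.

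For part (A), your route through Theorem \ref{thm:CL} applied to the two-variable RIF $1/\Psi_\lambda$ is genuinely different from the paper's argument, which instead deduces atorality of $q_\lambda$, concludes $q_\lambda$ and $\tilde q_\lambda$ are coprime, and then directly reads off the singularity. Your approach is an attractive alternative, but your assertion that the argument works ``without any need to pass to a reduced atoral form'' is incorrect, and this is a real issue, not a cosmetic one. Theorem \ref{thm:CL} is stated and proved for a RIF written with an atoral denominator: its proof uses $\dim(\mathcal{Z}_p\cap\mathbb{T}^d)\le d-2$, which is exactly the atoral condition, and the conclusion $\mathcal{C}_\mu=\mathcal{L}_\mu$ would simply be false if $q_\lambda$ and $\tilde q_\lambda$ shared a common factor vanishing on a curve in $\mathbb{T}^2$. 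Moreover, your penultimate step quietly assumes ``$1/\Psi_\lambda$ is undefined at $(\tau_1,\tau_2)$,'' which again is only guaranteed if no common factor cancels. The rescue is exactly the paper's step you tried to avoid: the finiteness of $\mathcal{Z}_p\cap\mathbb{T}^3$, via Lemma \ref{lem:lim}(A), gives that $\mathcal{Z}_{q_\lambda}\cap\mathbb{T}^2$ is finite, hence $q_\lambda$ is atoral and $q_\lambda,\tilde q_\lambda$ are coprime. Once you insert that observation, both your invocation of Theorem \ref{thm:CL} and your continuity-contradiction argument become correct; but at that point you have essentially rederived the paper's argument with extra machinery, since coprimality of $q_\lambda$ and $\tilde q_\lambda$ together with their common zero already forces a singularity.
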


\begin{proof} To prove (A), assume that there is a $\tau \in \mathcal{Z}_p \cap \mathbb{T}^3$ with $\phi^*(\tau) = \lambda$. Then Lemma \ref{lem:lim} implies that $q_{\lambda}(\tau_1, \tau_2) =0=\tilde{q}_{\lambda}(\tau_1, \tau_2)$. Since $\mathcal{Z}_p \cap \mathbb{T}^3$ is a finite set, $q_{\lambda}$ has at most finitely many zeros on $\mathbb{T}^2$. This implies that $q_{\lambda}$ is atoral, see \cite{AMS06}, and thus, $\q_{\lambda}$ and $\tilde{q}_{\lambda}$ share no common factors. It follows that $1/\Psi_{\lambda}$ must have a singularity at $(\tau_1, \tau_2)$.

To prove (B), assume that there is no $\tau \in \mathcal{Z}_p \cap \mathbb{T}^3$ with $\phi^*(\tau) = \lambda$. Then Lemma \ref{lem:lim} implies that $\mathcal{Z}_{q_{\lambda}} \cap \mathbb{T}^2= \emptyset$. Thus, $1/\Psi_{\lambda}$ does not have any singularities on $\mathbb{T}^2$ and $q_{\lambda}$ is atoral. Then \cite[Lemma 10.1]{Kne15} implies that $q_{\lambda}$ also does not vanish on $(\mathbb{D}\times \T) \cup (\T \times \D)$ and hence, $1/\Psi_{\lambda}$  is continuous on $\overline{\mathbb{D}^2}.$
\end{proof}

\begin{example}\label{ex:faverevisited}
 We will briefly use the canonical RIF $\phi$ defined in \eqref{ex:3dfave} to illustrate the results in this section. 
For this $\phi$, we have
\[ p_1(z) = 3-z_1-z_2 \ \ \text{ and } p_2(z) = -1.\]
Then $\mathcal{Z}_p \cap \mathbb{T}^3 = \{ (1,1,1)\}$ and one can easily compute that
\[ \phi^*(1,1,1) =\lim_{r \nearrow 1} \phi(r,r,r) =-1= \frac{\tilde{p}_2(1, 1)}{p_1(1, 1)}.\]
Moreover, Corollary \ref{cor:1z} also implies that for every $\tau_3 \in \mathbb{T}$, 
\[ \phi^*(1,1,\tau_3)= \frac{\tilde{p}_2(1, 1)}{p_1(1, 1)}=-1.\]
To study the unimodular level sets $\mathcal{C}_{\lambda}$ of $\phi$, observe that for each $\lambda \in \mathbb{T}$, $q_{\lambda}$ is defined by 
\[ q_{\lambda}(z_1,z_2) = \lambda(3-z_1-z_2)+z_1z_2\]
and $q_{-1}$ is the only $q_{\lambda}$ with a zero on $\mathbb{T}^2$. Then Theorem \ref{lem:RIF} implies that for  
$\lambda \ne -1$, the unimodular level surface $\mathcal{C}_{\lambda}$ is described by 
\[z_3 = \Psi_{\lambda}(z_1,z_2)= \bar{\lambda} \frac{q_{\lambda}(z_1, z_2)}{\tilde{q}_{\lambda}(z_1,z_2)} = \frac{3\lambda-\lambda z_1-\lambda z_2+z_1z_2}{\lambda-z_1-z_2+3z_1z_2}.\]
As implied by Theorem \ref{thm:z2}, we can see that each $1/\Psi_{\lambda}$ is a two-variable RIF continuous on  $\overline{\D^2}$.
In contrast, when $\lambda=-1$,  Theorem \ref{lem:RIF} implies that $\mathcal{C}_{-1}$ contains both the vertical line $\{(1,1)\} \times \mathbb{T}$ and the surface described by
\[z_3 = \Psi_{-1}(z_1,z_2) = \frac{-3+z_1+z_2+z_1z_2}{-1-z_1-z_2+3z_1z_2},\]
which has a singularity at $(1,1)$. A generic $\mathcal{C}_{\lambda}$ as well as the surface portion $\mathcal{C}_{-1}$ are
displayed in Figure \ref{faveplots}(a). Several unimodular level curves of the $\Psi_{\lambda}$ are included in  Figure \ref{faveplots}(b); their lack of common intersection points highlights the fact that these RIFs do not possess singularities.
\begin{figure}[h!]
    \subfigure[Unimodular level set $\mathcal{C}_{-1}$ (salmon) with a discontinuity and a generic smooth $\mathcal{C}_{\lambda}$.]
      {\includegraphics[width=0.44 \textwidth]{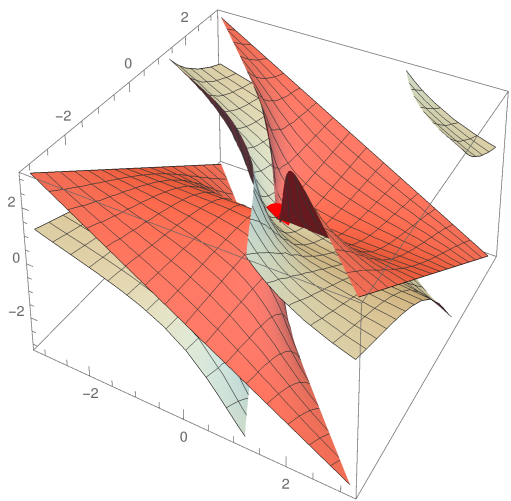}}
    \hfill
    \subfigure[Level curves for parametrizing $\Psi_{\lambda}$ for $\lambda=1$ (black) and $\lambda=\exp(i\pi/2)$ (green).]
      {\includegraphics[width=0.4 \textwidth]{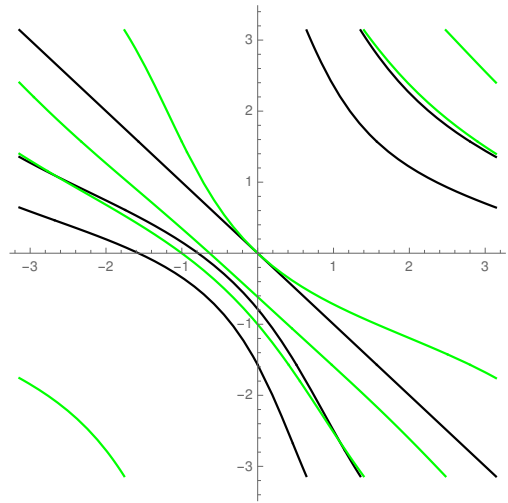}}
  \caption{\textsl{Level surfaces for $\phi$ in \eqref{ex:3dfave} and level curves of its parametrizing $\Psi_{\lambda}$}.}
  \label{faveplots}
\end{figure}

\end{example}

\section{Important RIF examples}\label{sec:higherzoo}

In this section, we illustrate a number of theorems and resolve several conjectures by constructing RIFs with specific properties. Here are the salient properties of each example. \\

\begin{itemize}
\item[i.] Example \ref{ex:curve} is a degree $(2,1,1)$ RIF $\phi = \frac{\tilde{p}}{p}$ whose singular set $\mathcal{Z}_p \cap \mathbb{T}^3$ is composed of three curves.  The function $\frac{\partial \phi}{\partial z_3} \in L^\p(\T^3)$ precisely when $\p < \frac{3}{2}$, its non-tangential boundary values illustrate Theorem \ref{thm:lim}B, and its unimodular level sets $\mathcal{C}_{\lambda}$ contain distinct vertical lines and have distinct singularities.  \\

\item[ii.] Example \ref{ex:curve2} is a degree $(2,1,1)$ RIF $\phi = \frac{\tilde{p}}{p}$ whose singular set $\mathcal{Z}_p \cap \mathbb{T}^3$ is composed of two curves, one of which is the vertical line $\{ (1,1)\} \times \mathbb{T}$.  The vertical line means that we cannot easily compute the integrability index of $\frac{\partial \phi}{\partial z_3}$; however, the function $\frac{\partial \phi}{\partial z_2} \in L^\p(\T^3)$ precisely when $\p <\frac{5}{4}.$  Its non-tangential boundary values illustrate Theorem \ref{thm:lim}C, and its generic unimodular level sets  $\mathcal{C}_{\lambda}$ all contain the same vertical line $\{(1,1)\} \times \mathbb{T}$ and have a common singularity at $(1,1).$ \\

\item[iii.] Example \ref{ex:curveiso} is a degree $(6,2,1)$ RIF $\phi = \frac{\tilde{p}}{p}$ whose singular set $\mathcal{Z}_p \cap \mathbb{T}^3$ is composed of an isolated point $(1,1,-1)$ and the curve $\{(-1, \zeta_2, -1): \zeta_2 \in \mathbb{T}\}.$ Near $(1,1,-1)$, $\frac{\partial \phi}{\partial z_3}$ is locally in $L^\p$ if and only if $\p <2.$ However, near the curve singularities, $\frac{\partial \phi}{\partial z_3}$ is locally in $L^\p$ if and only if $\p <\frac{3}{2}.$ Moreover, a direct computation rather than Greenblatt's Theorem \ref{thm:Green} must be used to deduce the $\p = \frac{3}{2}$ estimate. \\

\item[iv.] Example \ref{ex:curveiso2} is a degree $(2,2,2)$ RIF  $\phi = \frac{\tilde{p}}{p}$ whose singular set $\mathcal{Z}_p \cap \mathbb{T}^3$ is composed of the single point $(1,1,1)$.  A direct computation shows that  $\frac{\partial \phi}{\partial z_3} \in L^\p(\mathbb{T}^3)$ if and only if $\p < \frac{3}{2}$. This illustrates that RIFs with finite singular sets can exhibit the same derivative integrability as those with infinite singular sets. \\
\end{itemize}

\begin{example} \label{ex:curve}
To construct the first RIF, we combine representation formulas due to Agler, Tully-Doyle, and Young \cite{ATDY16} with ideas in \cite{Pas} as follows: consider the matrices
\[A=\left(\begin{array}{cccc}0 & 1 &0 & 1\\1 & 0 &1 & 0\\0 & 1 &0 & 1\\1 & 0 &1 &0\end{array}\right), \quad Y_1=\left(\begin{array}{cccc}1 & 0 & 0 & 0\\0 & 1 & 0 &0\\0 & 0 &0&0\\0 & 0 & 0 & 0 \end{array}\right), \quad Y_2=\left(\begin{array}{cccc}0 & 0 & 0 & 0\\0 & 0& 0 & 0\\0 & 0 &1 & 0\\ 0 & 0 & 0 &0 \end{array}\right),\]
and $Y_3=\mathbb{I}-Y_1-Y_2,$ where $\mathbb{I}$ is the $4 \times 4$ identity matrix. Define 
\begin{equation} 
f(w)=\langle (A-w_1Y_1-w_2Y_2-w_3Y_3)^{-1}v,v\rangle, \quad w \in \Pi^3,
\label{ATYFormula}
\end{equation}
where $v=(1,0,0,0)^T$. By \cite{ATDY16},
$f$ is a Pick function on the poly-upper half-plane $\Pi^3$. By conjugating with M\"obius maps of the form \eqref{moebmap} taking the polydisk to the poly-upper half-plane, one can obtain the degree $(2,1,1)$ RIF
\begin{equation}
\phi(z)=\frac{\tilde{p}(z)}{p(z)}=\frac{1-2z_1+z_1^2-z_2-2z_1z_2-z_1^2z_2+4z_1^2z_2z_3}{4-z_3-2z_1z_3-z_1^2z_3+z_2z_3-2z_1z_2z_3+z_1^2z_2z_3}.
\label{ex:nicecurveRIF}
\end{equation}
One can immediately see that as in \eqref{eqn:mn1},
\[ p_1(z_1,z_2) = 4 \ \ \text{ and } \ \ p_2(z_1, z_2) = -1-2z_1-z_1^2+z_2-2z_1z_2+z_1^2z_2.\]
Since $p_1$ does not vanish on $\mathbb{T}^3$, $\mathcal{Z}_{\tilde{p}}\cap\mathbb{T}^3$ does not contain a vertical line of the form $\{(\zeta_1, \zeta_2)\} \times \mathbb{T}$.
Moreover, Lemma \ref{lem:analytic} implies that if  $\psi^0= - \frac{\tilde{p}_2}{\tilde{p}_1}$, then $z_3: = \psi^0(z_1,z_2)$ parameterizes $\mathcal{Z}_{\tilde{p}}$ near $\mathbb{T}^3$.

Representing points on $\mathbb{T}^3$ using their arguments, one can check that
\begin{align*}
\mathcal{Z}_{p}\cap\mathbb{T}^3=&\{(0,t,0)\colon t\in [-\pi, \pi]\}\cup\{(s,0,-s) \colon s\in [-\pi, \pi]\}\\
 & \cup \{(\pi, t, -\pi-t) \colon \in t\in [-\pi, 0]\} \cup \{(\pi,t,\pi-t) \colon t\in [0, \pi]\},
 \end{align*}
so $\mathcal{Z}_{p}\cap\mathbb{T}^3$ is composed of three curves.

\begin{figure}[h!]
    \subfigure[$\mathcal{Z}_p \cap \mathbb{T}^3$ and a generic discontinuous $\mathcal{C}_{\lambda}$ ($\lambda=\exp(3i\pi/4)$) with vertical lines.]
      {\includegraphics[width=0.5 \textwidth]{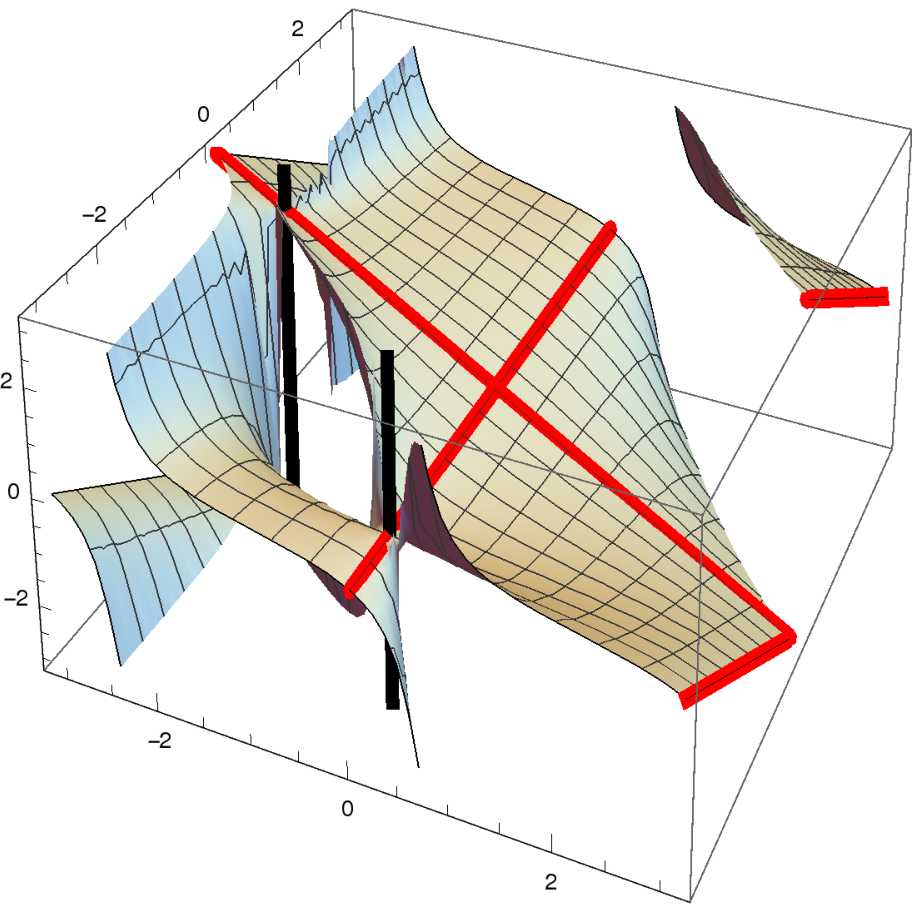}}
    \hspace{.5in}
    \subfigure[Level curves for parameterizing $\Psi_{\lambda}$ for $\lambda=\exp(3i\pi/4)$ (black) and $\lambda=\exp(i\pi/2)$ (green).]
      {\includegraphics[width=0.4 \textwidth]{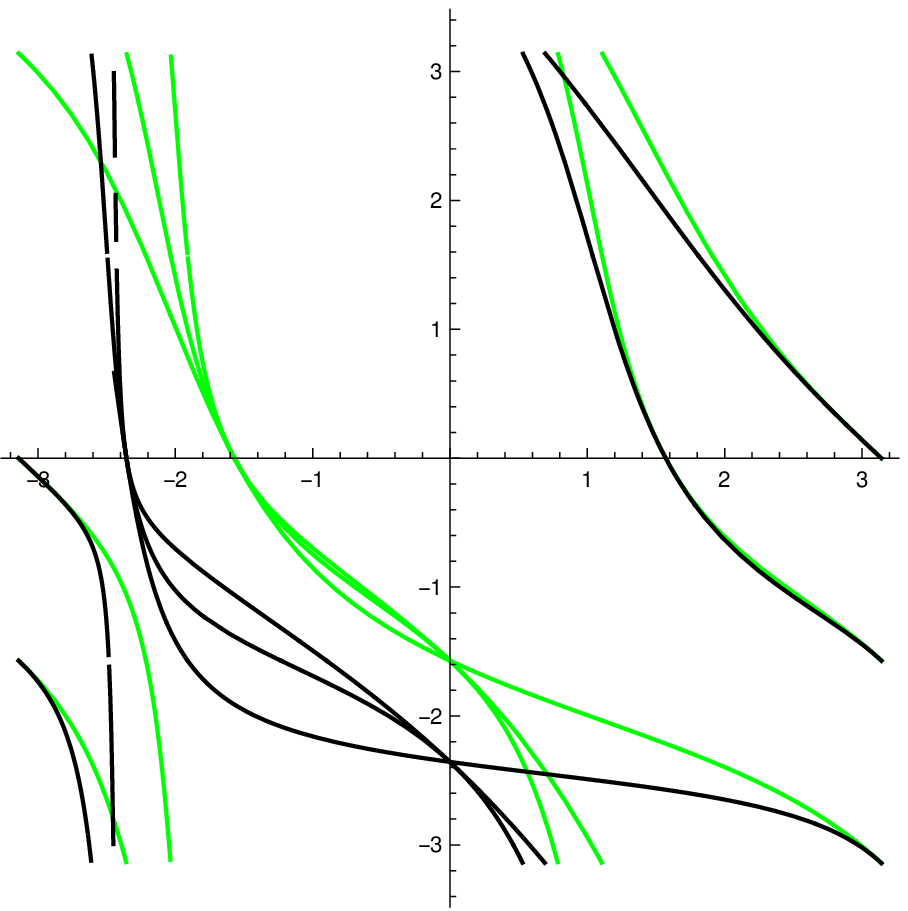}}
  \caption{\textsl{The zero set, a level set, and parameterizing functions for $\phi$ in
   \eqref{ex:nicecurveRIF}.}}
  \label{nicecurveplots}
\end{figure}
\end{example}

Let us consider $\phi$ in the context of Sections \ref{sec:3var1} and \ref{sec:3var2}. We first analyze its 
 derivative integrability, where an elementary computation reveals that 
\[\psi^0(z_1,z_2)=\frac{1}{4z_1z_2}\left(-1+2z_1-z_1^2+z_2+2z_1z_2+z_1^2z_2\right).\] 
Computing $\rho_{\phi}$ as in \eqref{eqn:psi} yields
\[ \rho_\phi(\theta_1,\theta_2)= 1- \left( \frac{1}{4}(3+\cos(2\theta_1)+\cos \theta_2-\cos (2\theta_1)\cos\theta_2) \right) = \theta_1^2\theta_2^2\left(\frac{1}{4}+\mathcal{O}(\|\theta\|)\right)\]
for $(\theta_1, \theta_2)$ near $(0,0)$. Computing successive partial derivatives of $\rho_{\phi}$ and evaluating at points of the form 
$(s,0)$ and $(t,0)$ one can show that $\rho_{\phi}$ has an expansion with bottom term of order $\theta_2^2$ and $\theta_1^2$, respectively.
Then as in \eqref{eqn:psi}, we have
\[\int_{\T^3}\left|\tfrac{\partial \phi}{\partial z_3}(\zeta)\right|^{\p}|d\zeta|\approx \iint_ {[-\pi, \pi]^2}\rho_{\phi}(\theta_1,\theta_2)^{1-\p}d\theta_1d\theta_2\]
and since $\rho_{\phi}(\theta_1,\theta_2)= (\theta_1\theta_2)^{2}\left(\frac{1}{4}+\mathcal{O}(\|\theta\|)\right)$ near the origin, we can conclude that
$\frac{\partial \phi}{\partial z_3} \in  L^{\p}(\T^3)$ if and only if $\p<\frac{3}{2}$. It is worth mentioning that while Theorem \ref{thm:Green} does apply here, the simplicity of $\rho_{\phi}$ makes its application unnecessary.

Now, as in Section \ref{sec:3var2}, consider the structure of $\phi$'s nontangential limits and unimodular level sets on $\mathbb{T}^3$. For each $\lambda \in \mathbb{T}\setminus\{-1\}$, there are exactly two points 
$(\tau^\lambda_{11},\tau^\lambda_{21})=(1, -\lambda)$ and $(\tau^\lambda_{12},\tau^\lambda_{22})=(-\lambda,1)$ on $\mathbb{T}^2$ so that for each $j$, the associated nontangential boundary value
 \[ \phi^*\Big( \tau^{\lambda}_{1j}, \tau^{\lambda}_{2j}, \psi^0\left (\tau^\lambda_{1j}, \tau^\lambda_{2j}\right) \Big) = \frac{\tilde{p}_2(\tau^\lambda_{1j}, \tau^\lambda_{2j})}{p_1(\tau^\lambda_{1j}, \tau^\lambda_{2j})}= \lambda.\]
 This can be seen by finding unimodular solutions to the equation $\tilde{p}_2(z_1,z_2)=4\lambda$
for each given $\lambda\in \mathbb{T}$.
 Thus, each $\lambda \in \mathbb{T}\setminus \{-1\}$ is the nontangential limit of $\phi$ associated to two points on $\mathcal{Z}_{\tilde{p}} \cap \mathbb{T}^3$ and if $\lambda=-1$ these points coincide. Furthermore, the structure of 
  $\mathcal{Z}_p$ puts us in the setting of Theorem \ref{thm:lim}B and allows us to compute all other nontangential values of $\phi$. In particular, as each $(\tau^\lambda_{1j}, \tau^{\lambda}_{2j}, \zeta_3) \in \mathcal{Z}_p$ for exactly one $\zeta_3:=  \psi^0(\tau^\lambda_{1j}, \tau^\lambda_{2j}) \in \mathbb{T}$, we obtain
\[ \phi^*(\tau^\lambda_{1j}, \tau^{\lambda}_{2j}, \tau_3) = \lambda \in \mathbb{T} \qquad \text{ for all } \tau_3 \in \mathbb{T}. \]
By Theorem \ref{lem:RIF}, each unimodular level set $\mathcal{C}_{\lambda}$ contains the two vertical lines $\{ (\tau^{\lambda}_{11}, \tau^{\lambda}_{21}) \}  \times \mathbb{T}$ and $\{ (\tau^{\lambda}_{12}, \tau^{\lambda}_{22}) \}  \times \mathbb{T}$ as well as the surface parameterized by
\[ z_3 = \Psi_{\lambda}(z_1,z_2)=\frac{1 - 4\lambda - 2z_1 + z_1^2 - z_2 - 2 z_1z_2 - z_1^2 z_2}{-\lambda - 2 \lambda z_1 - \lambda z_1^2 + \lambda z_2 - 
 2 \lambda z_1 z_2 - 4 z_1^2 z_2 + \lambda z_1^2 z_2}.\]
 A generic unimodular level set $\mathcal{C}_{\lambda}$ is displayed in Figure \ref{nicecurveplots}(a). 
It is worth noting that for each $\lambda \in  \T$, the parameterizing RIF $1/\Psi_{\lambda}$ has singularities at the points $(\tau^\lambda_{11}, \tau^\lambda_{21}), (\tau^\lambda_{12}, \tau^\lambda_{22}).$
Since the singularities are different for each $\lambda \in \mathbb{T}$,  the  $1/\Psi_{\lambda}$ have singularities at different points of $\T^2$. This is illustrated in Figure \ref{nicecurveplots}(b), where the $\Psi_{\lambda}$ have unimodular level curves clustering at different points.

\begin{example} \label{ex:curve2}
Applying the \eqref{ATYFormula} construction with  $v=(0,0,0,1)^T$ yields the degree $(2,1,1)$ RIF 
\begin{equation}
\phi(z)=\frac{\tilde{p}(z)}{p(z)}=\frac{1-z_1-z_1z_2+z_1^2z_2-2z_3-z_1z_3-z_1^2z_3+z_2z_3-z_1z_2z_3+4z_1^2z_2z_3}{4-z_1+z_1^2-z_2-z_1z_2-2z_1^2z_2+z_3-z_1z_3-z_1z_2z_3+z_1^2z_2z_3}.
\label{ex:badcurveRIF}
\end{equation}
It is immediate that as in \eqref{eqn:mn1},
\[ p_1(z_1,z_2) = 4-z_1 +z_1^2-z_2-z_1z_2-2z_1^2z_2\ \ \text{ and } \ \ p_2(z_1, z_2) = 1-z_1-z_1z_2+z_1^2z_2.\]
Here $\mathcal{Z}_{p}\cap \T^3$ has several features that sets it apart from the previous example. In particular, one can show that 
\[\mathcal{Z}_{p}\cap \T^3=\{(0,0, u)\colon u \in [-\pi, \pi]\}\cup \{(s, \arg m(e^{is}) ,\pi)\colon s \in [-\pi, \pi]\}\]
where $m(z)=\frac{3+z^2}{1+3z^2}$ has modulus $1$ for $z=e^{i\theta}$. 
Thus, $\mathcal{Z}_{p}\cap \T^3$ contains the vertical line $\{ (1,1)\} \times \mathbb{T}.$ The presence of this vertical line affects the integrability of $\phi$, the behavior of its non-tangential limits, and the structure of its unimodular level sets. First, since we cannot parametrize $z_3$ in terms of the variables $z_1$ and $z_2$, we cannot use the integrability results from Section \ref{sec:3var1} or sufficiently estimate the rate of growth of $\mu(\Omega_x)$ to apply Theorem \ref{thm:GenInt}.
Thus, we have been unable to compute the integrability index for $\frac{\partial \phi}{\partial z_3}.$  In contrast, one can adapt the arguments from Section \ref{sec:3var1} to compute the integrability index for $\frac{\partial \phi}{\partial z_2}.$ Because that computation is rather technical, we leave it to the end of this example.

Now we examine $\phi$ in the context of Section \ref{sec:3var2} and first consider the non-tangential boundary values of $\phi$. Because $\mathcal{Z}_{p}\cap \T^3$ contains the vertical line $\{ (1,1)\} \times \mathbb{T},$ if we consider $(1,1)$, we are in the setting of 
 Theorem \ref{thm:lim}C2. To apply it, one can check that
\[ \lim_{r \nearrow 1}  \frac{\tilde{p}_2(r,r)}{p_1(r,r)} = 0.\]
Then Theorem \ref{thm:lim} implies that for each $\lambda \in \mathbb{T}$, there is a unique $\tau_3 \in \mathbb{T}$ such that $\phi^*(1,1,\tau_3)=\lambda.$
A simple computation shows that this unique $\tau_3=-\lambda$. Considering any $(\tau_1, m(\tau_1))$ with $\tau_1 \ne 1 \in \mathbb{T}$ puts us in the setting of Theorem \ref{thm:lim}B. In this case, direct computation yields
\[ \phi^*(\tau_1, m(\tau_1), \tau_3) = \frac{ \tilde{p}_2(\tau_1, m(\tau_1))}{p_1(\tau_1, m(\tau_1))}=1 \quad \text{for all $\tau_3 \in \mathbb{T}$.}\]

These boundary value results allow us to deduce the structure of $\phi$'s unimodular level sets.
First, we can conclude that if $\lambda \ne 1$, $q_{\lambda}$ only vanishes at $(1,1)$ on $\mathbb{T}^2$. Then for each $\lambda \in \T$ with $\lambda \ne 1$, Theorem \ref{lem:RIF} implies that the unimodular level set $\mathcal{C}_\lambda$ contains exactly the vertical line $\{(1,1)\} \times \mathbb{T}$ and the surface
\[z_3=\Psi_{\lambda}(z_1,z_2)=\frac{1 - 4\lambda  - z_1 + \lambda z_1 - \lambda z_1^2 + \lambda z_2 - z_1 z_2 + \lambda z_1 z_2 + z_1^2 z_2 + 2 \lambda z_1^2 z_2}{2 + \lambda + z_1 - \lambda z_1 + z_1^2 - z_2 + z_1 z_2 - \lambda z_1 z_2 - 4 z_1^2 z_2 + \lambda z_1^2 z_2}.\]
A generic unimodular level set with $\mathcal{Z}_p \cap \mathbb{T}^3$ is shown in Figure \ref{badcurvelevellines}a.
Each  parametrizing RIF $1/\Psi_{\lambda}$ also has a single singularity at $(1,1)$. The unimodular level curves for several $\Psi_{\lambda}$  are shown in Figure \ref{badcurvelevellines}b: the fact that these all pass through $(0,0)$ reflects the common singularity at $(1,1)$.

\begin{figure}[h!]
    \subfigure[$\mathcal{Z}_p \cap \mathbb{T}^3$ and a generic discontinuous $\mathcal{C}_{\lambda}$.]
      {\includegraphics[width=0.5 \textwidth]{lightex2.eps}}
    \hspace{.5in}
    \subfigure[Level curves for parameterizing $\Psi_{\lambda}$. Different colors indicate different values of $\lambda$.]
      {\includegraphics[width=0.4 \textwidth]{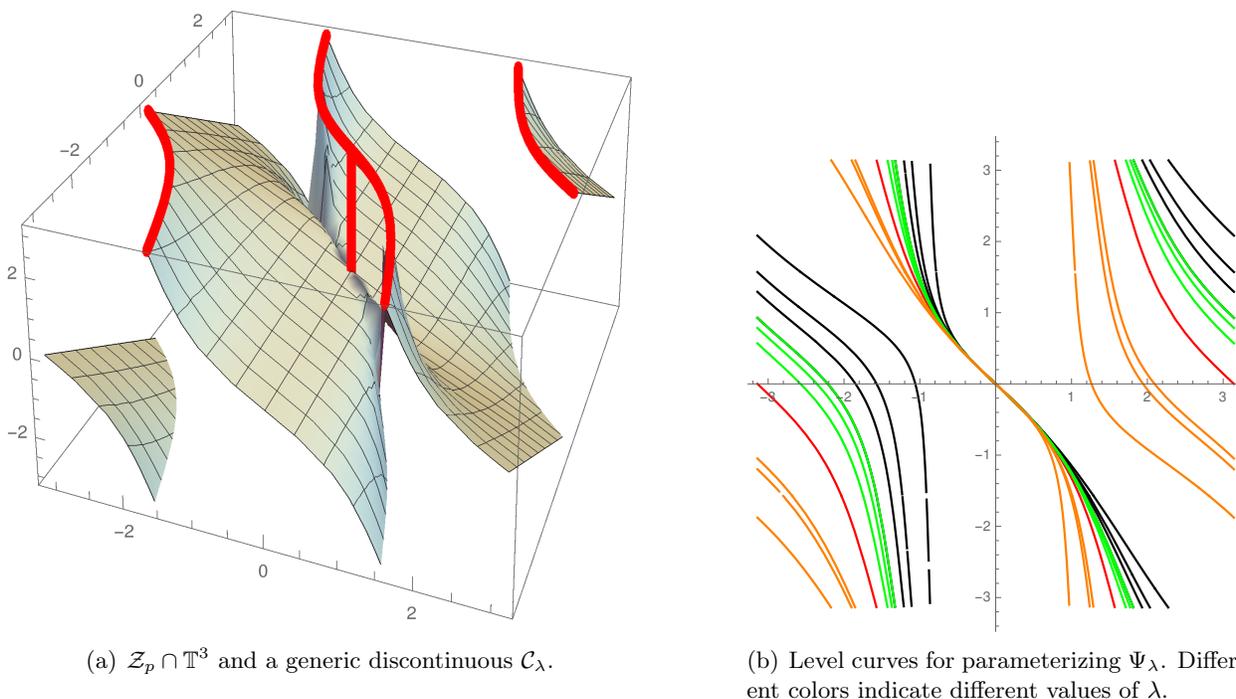}}
  \caption{\textsl{The zero set, a level set, and parameterizing functions for $\phi$ in
   \eqref{ex:badcurveRIF}.}}
  \label{badcurvelevellines}
\end{figure}

Finally consider $\lambda =1$. Then $q_1(\tau_1,\tau_2)$ vanishes whenever $\tau_2 = m(\tau_1)$. Thus Theorem \ref{lem:RIF} implies that the unimodular level set $\mathcal{C}_1$ contains every vertical line $\{(\tau_1, m(\tau_1))\} \times \mathbb{T}$ and the plane
\[ \tau_3 = \Psi_1(\tau_1, \tau_2) = -1.\]

Before ending this example, let us return to  $\frac{\partial \phi}{\partial z_2}$. It is worth investigating because its integrability is worse than has appeared in previous examples.
Solving $\tilde{p}(z)=0$ for $z_2$ gives
\[z_2= \psi^0(z_1,z_3):=\frac{-1+z_1+2z_3+z_1z_3+z_1^2z_3}{-z_1+z_1^2+z_3-z_1z_3+4z_1^2z_3}.\]
Evaluating for $(\tau_1, \tau_3)=(e^{i\theta_1}, e^{i\theta_3})$ and performing some computations, we find that
\[|\psi^0(\tau_1,\tau_3)|^2=\frac{4+2\cos \theta_1+2\cos\theta_2+2\cos(\theta_1-\theta_3)-\cos \theta_3-\cos(2\theta_1+\theta_3)}{10-6\cos\theta_1+4\cos(2\theta_1)-2\cos(\theta_1-\theta_3)+\cos(2\theta_1-\theta_3)+5\cos\theta_3-4\cos(\theta_1+\theta_3)}.\]
Next, we check that $|\psi^0(1,e^{i\theta_3})|^2=1$ and $|\psi^0(e^{i\theta_1},-1)|^2=1$ as expected. Let $d(\theta_1,\theta_3)$ and $n(\theta_1,\theta_3)$ denote the denominator and numerator in $|\psi^0(\theta_1,\theta_3)|^2$, respectively. First, we note that $d(\theta_1,\theta_3)$ is bounded below on $[-\pi, \pi]^2$. A computation using trigonometric identities shows that
\[d(\theta_1,\theta_3)-n(\theta_1,\theta_3)=32\sin^4\left(\frac{\theta_1}{2}\right)\cos^2\left(\frac{\theta_3}{2}\right).\]
Thus, we have
\[ |\psi^0(e^{i\theta_1},e^{i\theta_3})|^2=1-\frac{32}{d(\theta_1,\theta_3)}\sin^4\left(\frac{\theta_1}{2}\right)\cos^2\left(\frac{\theta_3}{2}\right),\]
meaning that the local integrability of $\frac{\partial \phi}{\partial z_2}$ is determined by the order of vanishing of the function
\[\varrho_{\phi}(\theta_1,\theta_3)=\sin^4\left(\frac{\theta_1}{2}\right)\cos^2\left(\frac{\theta_3}{2}\right)\]
along $\mathcal{Z}_p \cap \mathbb{T}^3$.  A straightforward expansion shows that
\[\varrho_{\phi}(\theta_1,\theta_3) = \theta_1^4\left(\frac{1}{16}-\frac{1}{64}\theta_3^2+\mathcal{O}(\theta_1^2)+\mathcal{O}(\theta_3^4)\right)\]
for $(\theta_1,\theta_3)$ close to $(0,0)$ and this order of vanishing gives the worst integrability present along the zero set. Thus, as in \eqref{eqn:psi},
\[\int_{\T^3}\left|\tfrac{\partial \phi}{\partial z_2}(\zeta)\right|^\p|d\zeta|\approx \int_{[-\epsilon,\epsilon]^2}\theta_1^{4-4p}\left(\tfrac{1}{16}-\tfrac{1}{64}\theta_3^2+\mathcal{O}(\theta_1^2)+\mathcal{O}(\theta_3^4)\right)^{1-\p}d\theta_1d\theta_3.\]
and $\frac{\partial \phi}{\partial z_2}\in L^{\p}(\T^3)$ if and only if $\p<\frac{5}{4}$.  Lastly, observe that near $(\pi,\pi)$, we have 
\[\varrho_{\phi}(\pi-\eta_1,\pi-\eta_2)\approx \eta_3^2(1-\frac{1}{2}\eta_1^2)\]
 to lowest order, meaning  that $\frac{\partial \phi}{\partial z_2}$ is locally in $L^\p$ near $(-1,-1)$ for $\p<\frac{3}{2}$.
\end{example}
\begin{question}
What is the critical integrability of $\frac{\partial \phi}{\partial z_3}$ for the above example? Is the presence of a joint vertical line for the level sets $\mathcal{C}_{\lambda}$ generically accompanied by worse integrability?
\end{question}
\begin{example} \label{ex:curveiso}
Consider the RIF in \eqref{ex:nicecurveRIF}, compose it with the polynomial conformal mappings $z_1\mapsto -\frac{1}{4}(z_1^3+3z_1)$ and $z_2\mapsto -\frac{1}{3}(z_2^2-2z_2)$, and multiply through by $48$ in numerator and denominator to obtain integer coefficients. This yields a degree $(6,2,1)$ RIF $\phi$ where $\phi(z)=\frac{\tilde{p}(z)}{p(z)}$
with
\begin{multline}
p(z)=192 + 48 z_3 - 72 z_1 z_3 + 27 z_1^2 z_3 - 24 z_1^3 z_3 + 18 z_1^4 z_3 + 3 z_1^6 z_3 + 
 32 z_2 z_3 + 48 z_1 z_2 z_3 + 18 z_1^2 z_2 z_3 + 16 z_1^3 z_2 z_3 \\+ 12 z_1^4 z_2 z_3 + 
 2 z_1^6 z_2 z_3 + 16 z_2^2 z_3 + 24 z_1 z_2^2 z_3 + 9 z_1^2 z_2^2 z_3 + 8 z_1^3 z_2^2 z_3 + 
 6 z_1^4 z_2^2 z_3 + z_1^6 z_2^2 z_3.
\label{combofeature}
\end{multline}
This $p$ is irreducible since we have $p=192+P(z_1,z_2)z_3$ for a polynomial $P\in \C[z_1,z_2]$.
Now consider $\mathcal{Z}_p$. We have  $p(1,1,-1)=0$, and $(1,1,-1)$ is an isolated zero of $p$ in $\T^3$; moreover, $\phi^*(1,1,-1)=1$. 
Furthermore, one can check by direct substitution that $p(-1,z_2,-1)=0=\tilde{p}(-1,z_2,-1)$, and together with $(1,1,-1),$ this is all of $\mathcal{Z}_p\cap \mathbb{T}^3$. Thus, $\phi$ has a curve of singularities in $\T^3$ in addition to the isolated singularity at $(1,1,-1)$. 

We will not give an indepth analysis of the non-tangential boundary values and unimodular level sets of $\phi$. However, the zero set $\mathcal{Z}_p \cap \mathbb{T}^3$ and a generic unimodular level curve are given in Figure \ref{combocurveplots}(a). Level curves for $\Psi_{\lambda}$  associated with  $\lambda=-1$ (black) and $\lambda=1$ (green) are given in  Figure \ref{combocurveplots}(b). Note that the green level lines all pass through the origin, whereas only one black curve does, reflecting the fact that $\phi^*(1,1,-1)=1$. However, both black and green curves pick up singularities at $\theta_1=\pi$, corresponding to the curve component of $\mathcal{Z}_p$.

\begin{figure}[h!]
    \subfigure[$\mathcal{Z}_p \cap \mathbb{T}^3$ and a generic $\mathcal{C}_{\lambda}$.]
      {\includegraphics[width=0.5 \textwidth]{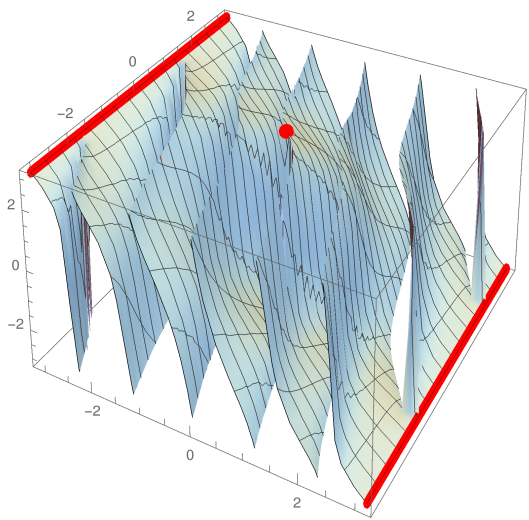}}
    \hspace{.5in}
    \subfigure[Level curves for parameterizing $\Psi_{\lambda}$ with $\lambda =1$ (green) and $\lambda=-1$ (black).]
      {\includegraphics[width=0.4 \textwidth]{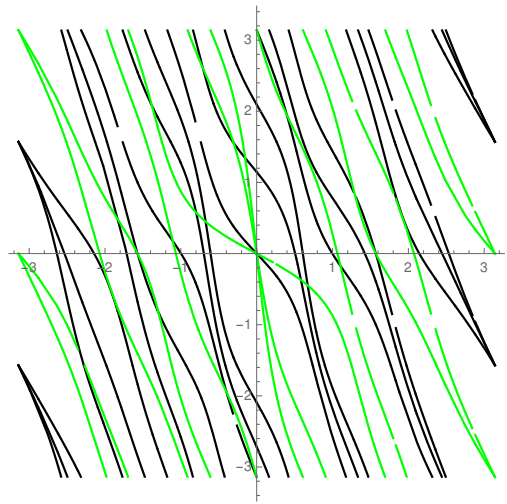}}
  \caption{\textsl{The zero set, a level set, and parameterizing functions for $\phi$ with denominator
   \eqref{combofeature}.}}
  \label{combocurveplots}
\end{figure}

Let us now consider the integrability of $\frac{\partial \phi}{\partial z_3}$ near these components of $\mathcal{Z}_p \cap \mathbb{T}$. First observe that
\begin{multline*}
\psi^0(z_1,z_2)=\frac{1}{192 z_1^6 z_2^2} \left(-1 - 6 z_1^2 - 8 z_1^3 - 9 z_1^4 - 24 z_1^5 - 16 z_1^6 - 2 z_2 - 
   12 z_1^2 z_2 - 16 z_1^3 z_2 - 18 z_1^4 z_2\right.\\ \left.- 48 z_1^5 z_2 - 32 z_1^6 z_2 - 3 z_2^2 - 
   18 z_1^2 z_2^2 + 24 z_1^3 z_2^2 - 27 z_2^4 z_2^2 + 72 z_1^5 z_2^2 - 48 z_1^6 z_2^2\right).
\end{multline*}
A careful analysis reveals that the quadratic form at $(0,0)$ associated with $\rho_{\phi}$ from \eqref{eqn:psi} is
\[Q_{\phi}(\theta_1,\theta_2)=3\theta_1^2+\tfrac{2}{9}\theta_2^2,\]
a pure sum of squares. Then Theorem \ref{thm:GT2} implies that $\frac{\partial \phi}{\partial z_3}\in L^{\p}$ locally near $(1,1,-1)$ if and only if $\p<2$.

The global integrability of $\frac{\partial  \phi}{\partial z_3}$ on $\mathbb{T}^3$ is worse, however. By Lemma \ref{lemma:Qformlemma}, we have $\nabla \rho_{\phi}(\pi, \theta_2)=\vec{0}$, and one checks that
$\frac{\partial^2\rho_{\phi}}{\partial \theta_2^2}(\pi, \theta_2)=\frac{\partial^2\rho_{\phi}}{\partial \theta_1\partial \theta_2}(\pi, \theta_2)=0$
while 
\[\frac{\partial^2 \rho_{\phi}}{\partial \theta_1^2}(\pi, \theta_2)=\frac{3}{4}\left(5+2\cos \theta_2+\cos(2\theta_2)\right)>0.\]
Thus, along the curve singularity, $\rho_{\phi}$ has a Taylor expansion with first non-vanishing term having degree two.
In particular, expanding $|\psi^0(e^{i\theta_1},e^{i\theta_2})|^2$ at, say, $(\pi, 0)$ yields
\[\rho_{\phi}(\pi-\eta_1,\eta_2)=3\eta_1^2+\mathcal{O}(\|\eta\|^4), \]
a quadratic form that is positive semi-definite but not strictly positive definite. Thus \eqref{eqn:psi} and the discussion in Remark \ref{rem:Green} can again be used to deduce that $\frac{\partial \phi}{\partial z_3}\in L^{\p}(\T^3)$ if and only if $\p<\frac{3}{2}$.  However, because the quadratic form is not positive definite, this is another instance where a direct application of Greenblatt's Theorem \ref{thm:Green} would not yield the optimal integrability index.
\end{example}

\begin{example} \label{ex:curveiso2}


To construct this example, we use a glueing procedure analogous to that presented in  \cite[Section 6]{BPS19}.
Specifically, let $p$ and $\tilde{p}$ be the denominator and numerator in \eqref{ex:3dfave}, respectively, and set $r(z)=p(z)^2+\tilde{p}(z)^2$. Take
\[\tilde{q}(z)=\sum_{j=1}^3z_j\tfrac{\partial r}{\partial z_j}(z),\] 
and reflect to obtain the polynomial $q$: 
\begin{multline}
q(z_1,z_2,z_3)=36 - 18 z_1 + 2 z_1^2 - 18 z_2 + 4 z_1 z_2 + 2 z_2^2 - 6 z_3 - 6 z_1 z_3 + 
 2 z_1^2 z_3 - 6 z_2 z_3 \\+ 4 z_1 z_2 z_3 + 2 z_1^2 z_2 z_3 + 2 z_2^2 z_3 + 
 2 z_1 z_2^2 z_3 + 2 z_1^2 z_2^2 z_3 - 2 z_3^2 + 2 z_1 z_3^2 \\ + 2 z_1^2 z_3^2 + 
 2 z_2 z_3^2 + 4 z_1 z_2 z_3^2 - 4 z_1^2 z_2 z_3^2 + 2 z_2^2 z_3^2 - 
 4 z_1 z_2^2 z_3^2 - 6 z_1^2 z_2^2 z_3^2.
\label{ex:gluepfave}
\end{multline}
We have $q(1,1,1)=0$ by direct computation. By arguing as in \cite[Section 6]{BPS19}, or by direct substitution, we see that the level set $\mathcal{C}_{-1}$ coincides with the union of the $i$-level set and the $-i$-level set of the RIF in Example \ref{ex:can}.
In particular, $\mathcal{C}_{-1}$ consists of two smooth sheets meeting at $(1,1,1)$ only. We next check that for  $(\zeta_1,\zeta_2)\in \mathbb{T}^2\setminus\{(1,1)\}$, we have $q(\zeta_1,\zeta_2, \Psi_{\pm i}(\zeta_1,\zeta_2))\neq 0$, where $\Psi_{\lambda}$ is as in Example \ref{ex:faverevisited}. Thus $\phi = \frac{\tilde{q}}{q}$ is a degree $(2,2,2)$ rational inner function with an isolated singularity at $(1,1,1)$ by Theorem \ref{thm:CL}.  One can also check that $\phi^*(1,1,1)=-1$. However, because $\deg \phi \ne (m,n,1)$, we cannot use the results in Section \ref{sec:3var2} to study the non-tangential boundary values and unimodular level sets of $\phi$. Instead, we restrict to considering its derivative integrability properties.

\begin{figure}[h!]
 \includegraphics[width=0.4 \textwidth]{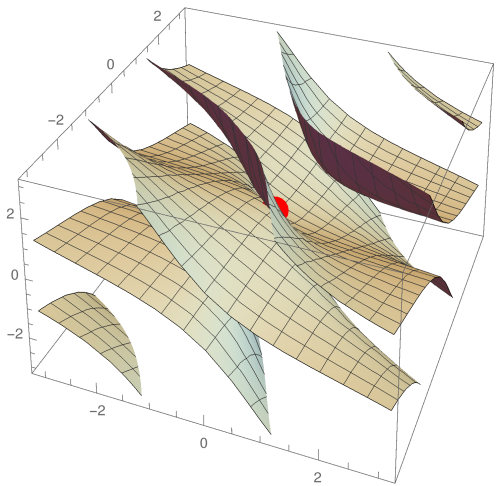}
  \caption{\textsl{Level set $\mathcal{C}_{-1}$, made up of two sheets, for the RIF with denominator \eqref{ex:gluepfave}}.}
  \label{fig:gluesurface}
\end{figure}

First, solving $\tilde{q}(z)=0$ for $z_3$ gives us the two functions
\begin{multline*}
\psi^0_1(z_1,z_2)=\frac{1}{2 (2 + 4 z_1^2 + 8 z_1 z_2 - 30 z_1^2 z_2 + 
     4 z_2^2 - 30 z_1 z_2^2 + 54 z_1^2 z_2^2)}\\ \cdot \Big[6 - 4 z_1 - 4 z_2 - 8 z_1^2 z_2 - 8 z_1 z_2^2 + 
   30 z_1^2 z_2^2 - \left((-6 + 4 z_1 + 4 z_2 + 8 z_1^2 z_2 + 8 z_1 z_2^2 - 
        30 z_1^2 z_2^2)^2\right.  \\  \left. - 
      4 (-6 z_1 + 2 z_1^2 - 6 z_2 + 4 z_1 z_2 + 2 z_2^2 + 4 z_1^2 z_2^2) (2 + 
         4 z_1^2 + 8 z_1 z_2 - 30 z_1^2 z_2 + 4 z_2^2 - 30 z_1 z_2^2 + 
         54 z_1^2 z_2^2)\right)^{\frac{1}{2}} \Big]
\end{multline*}
and
\begin{multline*}
\psi^0_2(z_1,z_2)=\frac{1}{2 (2 + 4 z_1^2 + 8 z_1 z_2 - 30 z_1^2 z_2 + 
     4 z_2^2 - 30 z_1 z_2^2 + 54 z_1^2 z_2^2)}\\ \cdot \Big[6 - 4 z_1 - 4 z_2 - 8 z_1^2 z_2 - 8 z_1 z_2^2 + 
   30 z_1^2 z_2^2 + \left((-6 + 4 z_1 + 4 z_2 + 8 z_1^2 z_2 + 8 z_1 z_2^2 - 
        30 z_1^2 z_2^2)^2\right.  \\  \left. - 
      4 (-6 z_1 + 2 z_1^2 - 6 z_2 + 4 z_1 z_2 + 2 z_2^2 + 4 z_1^2 z_2^2) (2 + 
         4 z_1^2 + 8 z_1 z_2 - 30 z_1^2 z_2 + 4 z_2^2 - 30 z_1 z_2^2 + 
         54 z_1^2 z_2^2)\right)^{\frac{1}{2}} \Big].
\end{multline*}
Note that we need two functions $\psi^0_{1}, \psi^0_2$ because $\deg \phi = (2,2,2)$. Direct substitution reveals that $\psi^0_2(1,1)=1$ and $\psi^0_1(1,1)=0$. Hence only the branch parametrized by $\psi^0_2$ hits the singular point $(1,1,1)$. Thus to study the integrability of $\frac{\partial \phi}{\partial z_3}$, it suffices to consider 
\[\rho_{\phi}(\theta_1,\theta_2)=1-|\psi^0_2(e^{i\theta_1},e^{i\theta_2})|^2.\]
A careful analysis shows that
\[\rho_{\phi}(\theta_1,\theta_2)=\theta_1^4+2\theta_1^3\theta_2+3\theta_1^2\theta_2^2+2\theta_1\theta_2^3+\theta_2^4+\mathcal{O}(\|\theta\|^5),\]
for $(\theta_1, \theta_2)$ near $(0,0)$. 
This means that $\frac{\partial \phi}{\partial z_3}\in L^{\p}(\T^3)$ for the same range of $\p$ for which
\begin{equation}
\iint_{U} (\theta_1^4+2\theta_1^3\theta_2+3\theta_1^2\theta_2^2+2\theta_1\theta_2^3+\theta_2^4)^{1-\p}d\theta_1d\theta_2<\infty
\label{deg4integral}
\end{equation}
for some neighborhood $U\ni (0,0)$. Setting $\mathcal{Q}(\theta_1,\theta_2)=\theta_1^4+2\theta_1^3\theta_2+3\theta_1^2\theta_2^2+2\theta_1\theta_2^3+\theta_2^4$, we note that
\[\mathcal{Q}(x+y,x-y)=(3x^2+y^2)^2.\]
Thus, making this change of variables, followed by the scaling $x\mapsto \sqrt{3}x$ and $y\mapsto y$, we deduce that \eqref{deg4integral} is finite if and only if
\[\iint_{\tilde{U}} (x^2+y^2)^{2-2\p}dxdy\]
is finite for some neighborhood $\tilde{U}$ of $(0,0)$. Introducing polar coordinates transforms the latter integral condition to the requirement that
\[\int_0^{\epsilon}r^{5-4\p}dr\]
be finite, which happens precisely when $\p<\frac{3}{2}$.

Comparing to Example \ref{ex:curve}, we see  that this isolated-singularity RIF  has the same integrability range for its $z_3$-derivative as a RIF with a curve of singularities in $\T^3$.
\end{example}

\begin{question}
Is there a degree $(m,n,1)$ RIF with an isolated singularity and the same integrability behavior as in Example \ref{ex:curveiso2}?
\end{question}


\subsection*{Acknowledgments}
We thank our home institutions, Bucknell University, the University of Florida, and Stockholm University, for facilitating visits during which large parts of this work took shape. 
 
\end{document}